\let\csname equation*\endcsname\relax
\let\csname endequation*\endcsname\relax
\theoremstyle{plain}% default
\newtheorem{theorem}{Theorem}
\newtheorem{lem}[theorem]{Lemma}
\newtheorem{as}{Assumption}
\theoremstyle{definition}
\theoremstyle{remark}
\newtheorem*{remark}{Remark}
\newcommand{\norm}[1]{\left\lVert#1\right\rVert}
\DeclareMathOperator*{\argmin}{arg\,min}
\DeclareMathOperator*{\X}{X}
\newcommand{\rd}{\,\mathrm{d}}
\begin{document}

\title[A Hybrid Reconstruction Approach for Absorption Coefficient by FPAT]{A Hybrid Reconstruction Approach for Absorption Coefficient by Fluorescence Photoacoustic Tomography}

\author{Chao Wang$^1$ \& Tie Zhou$^2$}

\address{$^1$$^2$ School of Mathematical Sciences, Peking University, China}
\ead{chaowyww@pku.edu.cn}
\ead{tzhou@math.pku.edu.cn}
\begin{abstract}
In this paper, we propose a hybrid method to reconstruct the absorption coefficient by fluorescence photoacoustic tomography (FPAT), which combines a squeeze iterative method (SIM) and a nonlinear optimization method. The SIM is to use two monotonic sequences to squeeze the exact coefficient, and it quickly locates near the exact coefficient. The nonlinear optimization method is utilized to attain a higher accuracy. The hybrid method inherits the advantages of each method with higher accuracy and faster convergence. The hybrid reconstruction method is also suitable for multi-measurement. Numerical experiments show that the hybrid method converges faster than the optimization method in multi-measurement case, and that the accuracy is also higher in one-measurement case.
\end{abstract}
%\tableofcontents
\section{Introduction}
\label{sec:1}

Fluorescence photoacoustic tomography (FPAT) can achieve targeted imaging for some specific biological tissues marked by fluorescent dyes~\cite{Ren2013}, which combines photoacoustic tomography (PAT) and fluorescence molecular tomography (FMT) with the merits of high spatial resolution and high optical contrast respectively. In practical application, the fluorescent dye is injected into the biological tissue and the tissue is illuminated by a series of short pulsed laser with given wavelength, called excitation light, then it propagates through tissue and the energy of the light is absorbed by the tissue and dyestuffs. The fluorophores  in the tissue are  also illuminted and excited to emit light at a different wavelength, called emission light. The emission light is also absorbed. All the energy absorbed by the tissue and fluorescent dyestuffs comes partly from the excitation light and partly from the light emission light. As the energy is absorbed and released, tissue expands and contracts, which gives rise to an ultrasound wave. Then the wave spreads outward and is recorded by the ultrasound detectors outside. The physical process is also illustrated in figure \ref{fig:process}. Because the light travels much faster than the ultrasound wave, we think that the two parts of energy generate initial pressure almost at the same time.

\begin{figure}[H]
    \centering
\includegraphics[width=0.5\textwidth]{./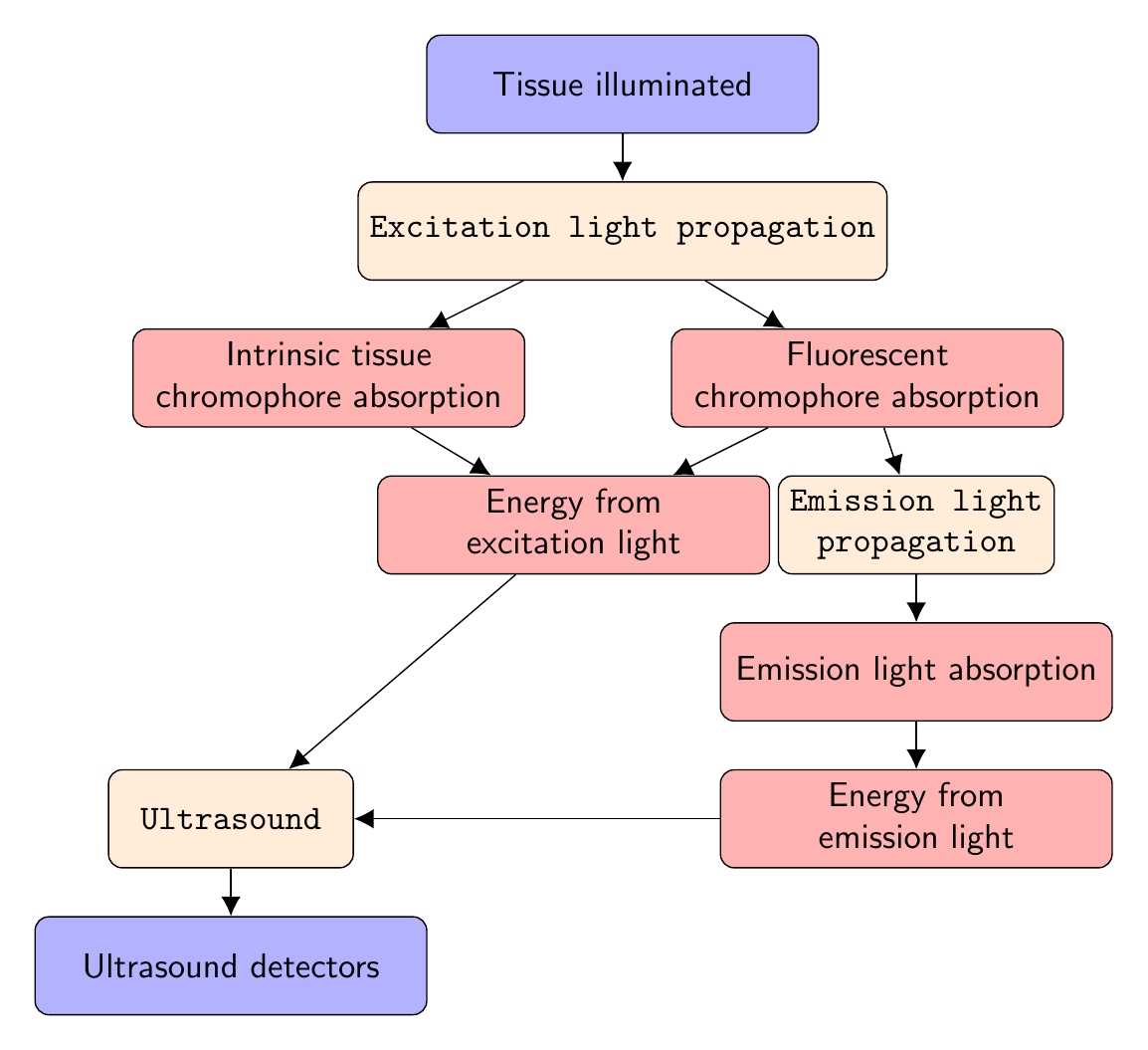}
 \caption{Diagram of FPAT.\label{fig:process}}
\end{figure}

The optical coefficient of biological tissue plays a key role in medical diagnosis. Since exogenous contrast agents, such as fluorescent dyes, can improve contrast, sensitive and specificity, fluorescence-based tomography is applied to medical imaging~\cite{Ammari2012,Wu2005,Kumar2008,Godavarty2005,Alvarez2009,Soloviev2007,Godavarty2005}. Researchers have found that fluorescent-based tomography is more targeted and specific for the visualization of cancerous areas because fluorescence enhances glycolysis of cancer cells \cite{Wu2005}. Despite this, existing fluorescence-based imaging cannot image the optical coefficients of deep tissue due to the strong scattering of infrared light by biological tissue~\cite{Wang2012}. Because of the low scattering of the ultrasound in biological tissues, the PAT can recover the internal optical information from the ultrasound, and it overcomes the diffraction limit of optical imaging and can achieve higher spatial resolution. However, PAT has lower optical contrast than fluorescence optical tomography, which is verified in experiment \cite{Wang2012}. Therefore, the combination of light and ultrasound can bring optical information from deep tissue to the outside. The fluorescence photoacoustic tomography as a hybrid imaging technique is expected to obtain higher spatial resolution and higher contrast simultaneously. Also acoustic radiation is used to improve the resolution of FMT~\cite{Li2018}, which also makes use of the characteristics of low-scattering sound.

There are two stages in the image reconstruction of FPAT: one is to determine the spatial distribution of initial pressure from ultrasound information, called the regular PAT; the other one is to recover optical coefficients from initial pressure, called quantitative step. A lot of researchers have studied the theories and algorithms of the regular PAT~\cite{Haltmeier2018,Lucka2018,Schwab2018,Hauptmann2018,Xu2002,Xu2006,Agranovsky2007,Lv2014,Kuchment}. Provided energy distribution, quantitative step is essential for exploring the optical properties inside tissue and improving visualization capability~\cite{Mamonov2012}. Quantitative photoacoustic tomography (QPAT) recovers intrinsic optical coefficients, including absorption, scattering coefficient and conversion efficiency~\cite{Rabanser2018,Haltmeier,Cox2009}. Quantitative FPAT recovers fluorescence absorption coefficient $\mu_{a,xf}$, quantum efficiency $\eta$ and conversion efficiency $\gamma$. There are some experiments combined with PAT and fluorescence optical imaging~\cite{Langer2018,Razansky2007,Razansky2009,Wang2010}. Both the diffusion approximation (DA)~\cite{Bal,Ding2015} and radiative transfer equation (RTE)~\cite{Rabanser2018,Wang2017,Mamonov2012,Tarvainen2012} are applied to model the propagation of light in tissue. DA model for FPAT is firstly derived in~\cite{Ren2013} and some uniqueness and stability results are established. RTE model for FPAT is firstly described in~\cite{Ren2015}, and the uniqueness of reconstruction is established for fluorescent absorption coefficient $\mu_{a,xf}$ and quantum efficiency $\eta$ under some assumptions. It has been proved in~\cite{Ren2015} that given one of $\mu_{a,xf}$ and $\eta$, initial pressure can uniquely determine the other one. In medical diagnosis, fluorescence absorption coefficient reflects the density of fluorescent markers, which contributes to determine the location of lesion. In this paper, provided quantum efficiency $\eta$ and conversion efficiency $\gamma$, we focus on recovering the fluorescence absorption coefficient $\mu_{a,xf}$ based on RTE, which is considered more accurately to describe the light propagating through tissue than DA~\cite{Yao2010}.  

In this paper, our goal is to design an efficient numerical method with fast convergence and high accuracy to recover the fluorescence absorption coefficient $\mu_{a,xf}$ from initial pressure based on RTE model. Firstly, a squeeze iterative method (SIM) is expected to approximate the exact value $\mu_{a,xf}^*$ from two sides and it quickly loactes near the exact coefficient. Then, the nonlinear optimization method, as a state-of-art method, is used to attain a higher accuracy stably. Therefore, combined with two methods, hybrid method is proposed to achieve high accuracy and fast convergence simultaneously. Simulations show that the hybrid method are comparable and even better than optimization method in one-measurement case.

The rest of the paper is organized as follows. We introduce the mathematical model of FPAT based on~\cite{Ren2015} in section~\ref{sec:2}. Then we propose the hybrid algorithm to recover fluorescence absorption coefficient $\mu_{a,xf}$ and discuss the properties of SIM in section~\ref{sec:3}. Numerical experiments based on synthetic data are presented in section~\ref{sec:4} by comparing the hybrid algorithm and the nonlinear optimization algorithm. Conclusions are drawn in section~\ref{sec:5}.

\section{Mathematical model}
\label{sec:2}
In this section, we present the mathematical model of FPAT refered to \cite{Ren2015} and its several properties. Through this paper,we assume $\Omega\in \mathcal{R}^d (d=2,3)$ is bounded convex region with Lipschitz boundary $\partial \Omega$, and $\mathcal{S}^{d-1}$ is the angular space in $\mathcal{R}^d$. We denote the phase space by $X=\Omega\times \mathcal{S}^{d-1}$. Outflow and inflow boundaries are denoted by $\Gamma_+$ and $\Gamma_-$ respectively, which represent $\Gamma_{\pm}:=\{(x,\theta)\in \partial \Omega \times \mathcal{S}^{d-1}| \pm \theta\cdot \nu(x)>0\}$, where $\nu$ is the outward unit normal vector. We define the scattering operator $\bm{K}$, average operator $\bm{A}$ and collecting operator $\tilde{\bm{A}}$ respectively by
\begin{equation*}
  \begin{aligned}
 &(\bm{K}\phi)(x):=\oint_{\mathcal{S}^{d-1}}f(\theta,\theta')\phi(x,\theta')\rd \theta',\\
&(\bm{A}\phi)(x):=\oint_{\mathcal{S}^{d-1}}\phi(x,\theta)\rd \theta,\\
   & (\tilde{\bm{A}}\phi)(x):=\frac{1}{S_d}\oint_{\mathcal{S}^{d-1}}\phi(x,\theta)\rd \theta,
  \end{aligned}
\end{equation*}
where $S_d$ is the area of the unit sphere in $\mathcal{R}^d$.

The light traveling inside can be described by the stationary radiative transfer equations (RTE)
\begin{equation}
  \label{eq:1}
\left\{
  \begin{aligned}
   & (\theta\cdot \nabla+\mu_{a,x}(x)+\mu_{s,x}(x)-\mu_{s,x}(x)\bm{K})\phi_x(x,\theta)=0,&& \text{in }X\\
&(\theta\cdot\nabla+\mu_{a,m}(x)+\mu_{s,m}(x)-\mu_{s,m}(x)\bm{K})\phi_m(x,\theta)=\eta(x)\mu_{a,xf}(x)\tilde{\bm{A}}\phi_x(x),&&\text{in }X\\
&\phi_x(x,\theta)=q_b(x,\theta), \quad \phi_m(x,\theta)=0, &&\text{on }\Gamma_-,
  \end{aligned}
\right. 
\end{equation}
where the subscripts $x$ and $m$ represent the quantities in the state of excitation and emission respectively, and we list them in table \ref{tab:def}.
\begin{table}
  \centering
\begin{tabular}{@{}cc|cc@{}}
\hline
    Symbol&Quantity&Symbol&Quantity\\
\hline
$x$&Spatial point&$\theta$&Direction\\
\hline
$\mu_{a,xi}(x)$&\makecell[cc]{Intrinsic chromophores \\absorption coef.}&$\mu_{a,xf}(x)$&\makecell[cc]{Fluorophores \\absorption coef.}\\
\hline
$\mu_{a,x}(x)$& $\mu_{a,xi}+\mu_{a,xf}$ &$\mu_{a,m}(x)$&\makecell[cc]{Absorption coef. \\at emission state}\\
\hline
$\mu_{s,x}(x)$&\makecell[cc]{Scattering coef. \\at excitation state}&$\mu_{s,m}(x)$&\makecell{Scattering coef. \\at emission state}\\
\hline
  $\phi_x(x,\theta)$&\makecell[cc]{Density of energy \\of excited light}&$\phi_m(x,\theta)$&\makecell[cc]{Density of energy \\of emission light}\\
  \hline
  $\eta(x)$&\makecell[cc]{Quantum efficiency\\ of the fluorophores}& & \\
  \hline
\end{tabular}
\caption{\label{tab:def}Symbols}
\end{table}
It is remarkable that $\phi_x(x,\theta)$ is caused by internal external light source $q_b$ and $\phi_m$ is caused by internal fluorescent markers source, which is formed by the excited photon energy absorption, that is $\eta\mu_{a,xf}\tilde{\bm{A}}\phi_x$. Scattering operator is characterized by scattering kernel $f(\theta,\theta')$, which represents the probability of light traveling from direction $\theta'$ to the direction $\theta$, and we usually use well-known Henyey-Greenstein (H-G) scattering function of the form
\begin{equation}
  \label{eq:3}
  f(\theta,\theta')=
\left\{
  \begin{aligned}
    &\frac{1-g^2}{2\pi(1+g^2-2g\theta\cdot\theta')},&&\quad n=2,\\
&\frac{1-g^2}{2\pi(1+g^2-2g\theta\cdot\theta')^{3/2}},&&\quad n=3,
  \end{aligned}
\right.
\end{equation}
which is symmetric and satisfies
\begin{equation}
  \label{eq:9}
  \oint_{\mathcal{S}^{d-1}}f(\theta,\theta')\rd \theta'=1.
\end{equation}

After light traveling and energy conversion, the initial pressure generates with the form
\begin{equation}
  \label{eq:4}
  p_0(x):=\gamma(x)h(x),
\end{equation}
where $\gamma(x)$ is the spatially varying conversion efficiency from absorbed photon energy to initial pressure and $h(x)$ is the absorbed energy,
\begin{equation}
  \label{eq:heat}
 h(x)=(\mu_{a,xi}+(1-\eta)\mu_{a,xf})(\bm{A}\phi_x)(x)+\mu_{a,m}(\bm{A}\phi_m)(x),
\end{equation}
where $\phi_x(x,\theta)$ and $\phi_m(x,\theta)$ are the solutions of RTE system \eqref{eq:1} depending on boundary condition $q_b(x,\theta)$ and optical coefficients $\mu_{a,x}$, $\mu_{a,m}$, $\mu_{s,x}$ and $\mu_{s,m}$. The absorbed energy at the excitation state is $(\mu_{a,xi}+\mu_{a,xf})(\bm{A}\phi_x)(x)$, a portion of which is $\eta\mu_{a,xf}(\bm{A}\phi_x)(x)$ to excite fluorescent light. Then the remaining energy at the excitation state $((\mu_{a,xi}+(1-\eta)\mu_{a,xf})(\bm{A}\phi_x)(x)$, and the absorbed energy at emission state, $\mu_{a,m}(\bm{A}\phi_m)(x)$, together generates the initial pressure with the conversion efficiency $\gamma$. And then, the tissue expands outward due to the absorbed energy, which brings out ultrasound traveling through tissue formulated by wave function
\begin{equation}
  \label{eq:5}
\left\{
\begin{aligned}
  &\frac{1}{c^2(x)}\frac{\partial^2}{\partial t^2}p(x,t)-\Delta p(x,t)=0, \quad &&(x,t)\in \mathcal{R}^n\times(0,T], \\
&p(x,0)=p_0(x),&& x\in \Omega,\\
&\frac{\partial p}{\partial t}(x,0)=0,&& x\in \Omega,
\end{aligned}
\right.
\end{equation}
where $c(x)$ is the speed of ultrasound inside tissue; $p(x,t)$ is the pressure of sound in the spatial point $x\in \mathcal{R}^d$ and the temporal point $t$; $p_0(x)$ is the initial pressure. The measurement $p(x,t)|_{\partial \Omega\times(0,T]}$ is obtained on the surface $\partial \Omega$ by ultrasound detectors. 
 
The FPAT is mainly concerned with the reconstruction of $\mu_{a,xf}$, $\eta$ and $\gamma$, assuming that the related optical coefficients $\mu_{a,xi}$, $\mu_{s,x}$, $\mu_{a,m}$ and $\mu_{s,m}$ can be acquired by other imaging technology such as DOT and QPAT. In the imaging experiments, firstly we need to reconstruct initial pressure $p_0(x)\ (x\in\Omega)$ from ultrasound data $p(x,t)|_{\partial \Omega\times(0,T]}$, where $T$ is large enough to ensure that information inside the tissue has been already transmitted. Secondly, optical coefficients $\mu_{a,xf}$, $\eta$ and $\gamma$ are recovered from $p_0(x)\ (x\in\Omega)$. In this paper, assuming $\gamma(x)=1$, we focus on the inverse problem of the reconstruction of $\mu_{a,xf}(x)$ from $h(x;\mu_{a,xf},q_b)$ given $\eta$.

In this paper, we use superscript to indicate the number of iteration. The fluorescence absorption coefficient is denoted by $\mu_{a,xf}^i$ in $i$th iteration. We assume $S$ measurements and corresponding boundary conditions are $q_{b,s}\ (s=0,1,\dots,S-1)$. Then we denote the solutions of RTE system \eqref{eq:1} and the data in $i$th iteration and $s$th measurement by $\phi_{x,s}^i(x,\theta;\mu_{a,xf}^i,q_{b,s})$, $\phi_{m,s}^i(x,\theta;\mu_{a,xf}^i,q_{b,s})$ and $h^i_s(x;\mu_{a,xf}^i,q_{b,s})$. Using symbol '$*$' to replace the '$i$', true quantities are denoted by $\mu_{a,xf}^*,\ \phi_{x,s}^*,\ \phi_{m,s}^*$, and $h_s^*$ for $s$th measurement. 

In order to discuss the properties of RTE system \eqref{eq:1}, we denote the space of all measurable functions defined in $X$ by $\mathcal{L}^p(X)\ (1\leq p\leq\infty)$, and its norm is
\begin{equation*}
  \|\phi(x,\theta)\|_p:=
  \begin{cases}
    \left(\int_{X}|\phi(x,\theta)|^p\rd \theta \rd x\right)^{\frac{1}{p}}&\text{for }p <\infty,\\
\text{ess}\ \sup_{(x,\theta)\in X)}|\phi(x,\theta)|&\text{for }p=\infty.
  \end{cases}
\end{equation*}
Correspondingly, the space of all measurable functions defined on $\Gamma_{\pm}$ is denoted by $\|\phi(x,\theta)\|_{\mathcal{L}^p(\Gamma_{\pm},|\theta\cdot \nu|)}$ and its norm is
\begin{equation*}
  \|\phi(x,\theta)\|_{\mathcal{L}^p(\Gamma_{\pm},|\theta\cdot \nu|)}:=
  \begin{cases}
    \left(\int_{\Gamma_{\pm}}|\theta\cdot \nu||\phi(x,\theta)|^p\rd \theta \rd x\right)^{\frac{1}{p}}&\text{for }p<\infty,\\
\text{ess}\ \sup_{(x,\theta)\in \Gamma_{\pm}}|\theta\cdot\nu||\phi(x,\theta)|&\text{for }p=\infty.
  \end{cases}
\end{equation*}

First of all, we make some assumptions on optical coefficients.
\begin{as}
\label{as:1}
   Assume optical coefficients and boundary source satisfy
  \begin{enumerate}
  \item  $\mu_{a,xf},\mu_{a,xi},\mu_{s,x},\mu_{a,m},\mu_{s,m}\in \mathcal{D}(\Omega):=\{u\in\Omega:0<c_1\leq u\leq c_2<\infty\}$ for some $c_1,c_2>0$;
  \item  $0< q_{b,s}(x,\theta)\in\mathcal{L}^p(X)$ for $s=0,1,\dots,S-1$. 
  \end{enumerate}

\end{as}

We define operators $\bm{T}_{1,s}$, $\bm{T}_{2,s}$ and $\bm{H}_{s}\ (s=0,1,\dots,S-1)$ by
\begin{equation}
  \label{eq:def}
  \begin{aligned}
    &\bm{T}_{1,s}:\mathcal{D}(\Omega)\mapsto  \mathcal{L}^p(X), \bm{T}_{1,s}(\mu_{a,xf})=\phi_{x,s}(x,\theta;\mu_{a,xf})\\
&\bm{T}_{2,s}:\mathcal{D}(\Omega)\mapsto \mathcal{L}^p(X), \bm{T}_{2,s}(\mu_{a,xf})=\phi_{m,s}(x,\theta;\mu_{a,xf})\\
&\bm{H}_s:\mathcal{D}(\Omega)\mapsto \mathcal{L}^p(\Omega), \bm{H}_s(\mu_{a,xf})=h_s(x;\mu_{a,xf})
  \end{aligned}
\end{equation}
In fact, $\bm{T}_{1,s}$ and $\bm{T}_{2,s}$ are the process of solving the first and the second RTE in \eqref{eq:1} given boundary condition $q_{b,s}\ (s=0,1,\dots,S-1)$. Under the physically reasonable assumption \ref{as:1}, we study the uniqueness and stability of the solutions of RTE system \eqref{eq:1}.
\begin{lem}
\label{lem:1}
  If the optical coefficients $\mu_a$, $\mu_s$ and boundary source $q_b$ of stationary RTE
  \begin{equation}
\label{eq:2}
\left\{
  \begin{aligned}
    &(\theta\cdot\nabla+\mu_a+\mu_s-\mu_s\bm{K})\phi(x,\theta)=q(x,\theta),\quad(x,\theta)\in X,\\
&\phi|_{\Gamma_-}=q_b(x,\theta)
  \end{aligned}
\right.
  \end{equation}
satisfy (i) of assumption \ref{as:1}, and its source term $q(x,\theta)\in\mathcal{L}^p(X)$,  then equation \eqref{eq:2} admits a unique solution $\phi(x,\theta)$ that satisfies 
\begin{equation}
  \label{eq:6}
  \norm{\phi}_p\leq c_4\norm{q}_p+c_5\norm{q_b}_{\mathcal{L}^p(\Gamma_-,|\theta\cdot\nu|)}
\end{equation}
for some $c_3,\ c_4>0$ depending on $\mu_a$ and $\Omega$.
\end{lem}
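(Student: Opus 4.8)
The plan is to split the transport operator in \eqref{eq:2} into the streaming--attenuation part $\bm{L}:=\theta\cdot\nabla+\mu_a+\mu_s$ and the scattering gain $\mu_s\bm{K}$, so that \eqref{eq:2} reads $\bm{L}\phi=\mu_s\bm{K}\phi+q$ with $\phi|_{\Gamma_-}=q_b$, and to treat $\bm{L}$ exactly while handling the scattering by a fixed--point argument. First I would solve $\bm{L}\psi=g,\ \psi|_{\Gamma_-}=q_b$ by the method of characteristics: writing $\mu_t:=\mu_a+\mu_s$ and letting $\tau(x,\theta)$ be the backward exit distance from $x$ along $-\theta$, the solution is
\begin{equation*}
\psi(x,\theta)=e^{-\int_0^{\tau}\mu_t(x-s\theta)\rd s}\,q_b(x-\tau\theta,\theta)+\int_0^{\tau}e^{-\int_0^{s}\mu_t(x-r\theta)\rd r}\,g(x-s\theta,\theta)\rd s .
\end{equation*}
Since $\mu_t\geq c_1>0$ this defines a bounded, positivity--preserving solution operator $\bm{L}^{-1}$ on $\mathcal{L}^p(X)$, and \eqref{eq:2} becomes the fixed--point equation $\phi=\bm{T}\phi+\Psi$ with $\bm{T}:=\bm{L}^{-1}\mu_s\bm{K}$ and $\Psi$ collecting the contributions of $q$ and $q_b$.

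The heart of the existence proof, and what I expect to be the main obstacle, is showing that $\bm{T}$ is a contraction. The normalization \eqref{eq:9} and the symmetry of $f$ give $\oint_{\mathcal{S}^{d-1}}f(\theta,\theta')\rd\theta=1$, so $\bm{K}$ is stochastic and $|\bm{K}\phi|\leq\bm{K}|\phi|\leq\norm{\phi}_{\infty}$. A naive bound $\norm{\bm{T}}\leq c_2/\min\mu_t$ is useless, since $\min\mu_t$ need not exceed $c_2$; the decisive point is that scattering conserves mass whereas the genuine absorption $\mu_a\geq c_1$ produces a strict loss. Applying $\bm{T}$ to the constant function and using $\bm{K}\mathbf{1}=\mathbf{1}$ together with $\mu_s=\mu_t-\mu_a\le\mu_t-c_1$ and $\int_0^{\tau}e^{-\int_0^s\mu_t\rd r}\mu_t\rd s=1-e^{-\int_0^{\tau}\mu_t\rd s}$, one obtains $\bm{T}\mathbf{1}=\bm{L}^{-1}\mu_s\le\kappa<1$ pointwise, the gap $\kappa$ being uniform because $\Omega$ is bounded (so $\tau\le\mathrm{diam}\,\Omega$). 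By positivity this yields $\norm{\bm{T}\phi}_{\infty}\le\kappa\norm{\phi}_{\infty}$; the same bound holds on $\mathcal{L}^1$ by duality (the adjoint has the transposed, equally normalized kernel), and Riesz--Thorin interpolation gives $\norm{\bm{T}}_{\mathcal{L}^p\to\mathcal{L}^p}\le\kappa<1$ for all $p$. A Neumann series then produces a unique $\phi\in\mathcal{L}^p(X)$.

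For the quantitative estimate \eqref{eq:6} I would use the energy (multiplier) method. Multiplying \eqref{eq:2} by $p|\phi|^{p-2}\phi$ (real case; the complex case is identical with $\re$) and integrating over $X$, the streaming term becomes a boundary integral by the divergence theorem, giving $\int_{\Gamma_+}|\theta\cdot\nu||\phi|^p-\int_{\Gamma_-}|\theta\cdot\nu||q_b|^p$, while the interior contributes $p\int_X\mu_t|\phi|^p-p\int_X\mu_s(\bm{K}\phi)|\phi|^{p-2}\phi$. The key pointwise--in--$x$ inequality is that, by H\"older together with the double normalization of $f$, $\oint_{\mathcal{S}^{d-1}}|\bm{K}\phi|\,|\phi|^{p-1}\rd\theta\le\oint_{\mathcal{S}^{d-1}}|\phi|^p\rd\theta$; hence the scattering gain is dominated by the $\mu_s$--part of the loss and the net loss is at least $c_1\norm{\phi}_p^p$ (only the true absorption $\mu_a$ survives). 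Dropping the nonnegative $\Gamma_+$ term and applying H\"older to the source term leaves the scalar inequality $pc_1\norm{\phi}_p^p\le\norm{q_b}_{\mathcal{L}^p(\Gamma_-,|\theta\cdot\nu|)}^p+p\norm{q}_p\norm{\phi}_p^{p-1}$. Treating the $q$-- and $q_b$--parts separately by linearity then yields \eqref{eq:6} with $c_4=1/c_1$ and $c_5=(pc_1)^{-1/p}$, both depending only on $c_1$ and $p$ (the case $p=\infty$ following instead directly from the characteristic formula). Finally, uniqueness is immediate from \eqref{eq:6}: homogeneous data $q=0,\ q_b=0$ force $\norm{\phi}_p=0$.
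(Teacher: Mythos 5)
Your a priori estimate is, step for step, the paper's own argument: multiply \eqref{eq:2} by $|\phi|^{p-2}\phi$, integrate over $X$, use the symmetry and double normalization \eqref{eq:9} of $f$ together with Young's inequality to dominate the scattering gain $\int_X\mu_s(\bm{K}\phi)\phi|\phi|^{p-2}\rd x\rd\theta$ by the scattering loss $\int_X\mu_s|\phi|^p\rd x\rd\theta$, so that only the coercive absorption term $\norm{\mu_a^{1/p}\phi}_p^p$ survives, and then absorb the source term by H\"older and Young; your constants differ only cosmetically from \eqref{eq:11}. Where you genuinely go beyond the paper is existence: the printed proof only derives the estimate (hence uniqueness) and defers solvability to the literature, whereas you construct the solution by inverting the streaming--attenuation part along characteristics and summing a Neumann series for $\bm{T}=\bm{L}^{-1}\mu_s\bm{K}$. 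That addition is welcome, and your $\mathcal{L}^\infty$ contraction is clean ($\bm{T}\mathbf{1}\le\kappa<1$ since $\mu_s\le\mu_t-c_1$ and $\tau\le\mathrm{diam}\,\Omega$). The one step you should tighten is the passage to $\mathcal{L}^1$ ``by duality'': the adjoint is $\bm{T}^*\psi=\bm{K}^*\bigl(\mu_s(\bm{L}^*)^{-1}\psi\bigr)$, and the best pointwise bound on $\bm{T}^*\mathbf{1}$ obtained this way is of order $c_2/(2c_1)$, which exceeds $1$ whenever $c_2>2c_1$; indeed the \emph{unweighted} $\mathcal{L}^1$ operator norm of $\bm{T}$ can exceed $1$, so the transposed-kernel argument does not directly yield $\kappa<1$. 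The standard repair is to use the equivalent weighted norm $\int_X\mu_t|\cdot|\rd\theta\rd x$: integrating $\bm{L}\psi=\mu_s\bm{K}\phi$ over $X$ and using \eqref{eq:9} gives $\int_X\mu_t|\bm{T}\phi|\rd\theta\rd x\le\int_X\mu_s|\phi|\rd\theta\rd x\le\tfrac{c_2}{c_1+c_2}\int_X\mu_t|\phi|\rd\theta\rd x$, after which your interpolation and Neumann-series conclusion go through with constants depending only on $c_1$, $c_2$ and $\Omega$. With that repair the proposal is correct and is strictly more complete than the paper's proof.
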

\begin{proof}
 For $1\leq p<\infty$, multiplying $\phi|\phi|^{p-2}$ on both sides of the equation \eqref{eq:2} and integrating over $X$, we can obtain
  \begin{multline}
\label{eq:8}
   \frac{1}{p}(\norm{\phi}_{\mathcal{L}^p(\Gamma_+,|\theta\cdot\nu|)}^p-\norm{\phi}^p_{\mathcal{L}^p(\Gamma_-,|\theta\cdot\nu|)})+\norm{\mu_a^{\frac{1}{p}}\phi}^p_p+\int_X\mu_s(x)\phi^2|\phi|^{p-2}\rd x\rd \theta\\
=\int_X\mu_s(\bm{K}\phi)\phi|\phi|^{p-2}\rd x\rd \theta+\int_X q \phi|\phi|^{p-2}\rd x\rd \theta.
  \end{multline}
By the property of the scattering kernel $f$ \eqref{eq:9} and Young's inequality, we have
\begin{equation*}
  \begin{aligned}
   &\left|\int_X\mu_s(\bm{K}\phi)\phi|\phi|^{p-2}\rd x\rd \theta\right|\\
=&\left|\int_{\Omega}\mu_s(x)\int_{\mathcal{S}^{d-1}}\int_{\mathcal{S}^{d-1}}f(\theta,\theta')\phi(x,\theta')\rd \theta'\phi(x,\theta)|\phi(x,\theta)|^{p-2}\rd \theta\rd x\right|\\
\leq&\int_{\Omega}\mu_s(x)\int_{\mathcal{S}^{d-1}}\int_{\mathcal{S}^{d-1}}f(\theta,\theta')\left|\phi(x,\theta')\right|\rd \theta'\left|\phi(x,\theta)\right|^{p-1}\rd \theta \rd x\\
\leq&\int_{\Omega}\mu_s(x)\int_{\mathcal{S}^{d-1}}\int_{\mathcal{S}^{d-1}}f(\theta,\theta')\left(\frac{1}{p}\left|\phi(x,\theta')\right|^p+\frac{p-1}{p}\left|\phi(x,\theta)\right|^p\right)\rd \theta' \rd \theta \rd x\\
\leq&\int_\Omega\mu_s(x)\int_{\mathcal{S}^{d-1}}\left(\int_{\mathcal{S}^{d-1}}f(\theta,\theta')\rd \theta\right)\frac{1}{p}\left|\phi(x,\theta')\right|^p\rd \theta'\rd x\\
&\phantom{aaa}+\int_\Omega\mu_s(x)\int_{\mathcal{S}^{d-1}}\left(\int_{\mathcal{S}^{d-1}}f(\theta,\theta')\rd \theta'\right)\frac{p-1}{p}\left|\phi(x,\theta)\right|^p\rd \theta \rd x\\
=&\int_\Omega\mu_s(x)\int_{\mathcal{S}^{d-1}}\left|\phi\right|^p\rd \theta\rd x.
  \end{aligned}
\end{equation*}
Moreover, by H\"older's and Young's inequalities, we can get
\begin{equation}
  \label{eq:10}
  \begin{aligned}
    & \left|\int_X q\phi \left|\phi\right|^{p-2}\rd \theta\rd x\right|\\
\leq&\int_X\left|\mu_a^{\frac{1-p}{p}}q\right|\left|\mu_a^{\frac{p-1}{p}}\left|\phi\right|^{p-1}\right|\rd \theta\rd x\\
\leq&\norm{\mu_a^{\frac{1-p}{p}}q}_p\norm{\mu_a^\frac{1}{p}\phi}_p^{p-1}\\
\leq&\frac{1}{p}\norm{\mu_a^{\frac{1-p}{p}}q}_p^p+\frac{p-1}{p}\norm{\mu_a^\frac{1}{p}\phi}_p^p.
  \end{aligned}
\end{equation}
Accordingly,
\begin{equation}
  \label{eq:11}
  \norm{\mu_a^{\frac{1}{p}}\phi}^p_p\leq\norm{\phi}^p_{\mathcal{L}^p(\Gamma_-,|\theta\cdot\nu|)}+\norm{\mu_a^{\frac{1-p}{p}}q}_p^p.
\end{equation}
Considering the boundedness of $\mu_a$, therefore
\begin{equation*}
  \norm{\phi}_p\leq c_3\norm{q}_p+c_4\norm{q_b}_{\mathcal{L}^p(\Gamma_-,|\theta\cdot \nu|)}.
\end{equation*}

When $p\rightarrow\infty$, it is obvious that inequality \eqref{eq:11} still holds, and so does inequality \eqref{eq:6}.
\end{proof}
With lemma \ref{lem:1}, naturally we can obtain the uniqueness and stability of RTE system \eqref{eq:1}. Similar proofs of uniqueness and stability of stationary RTE can be referred in \cite{Egger2013}.
\begin{theorem}
\label{the:1}
  If the optical coefficients and boundary condition $q_b$ in equation \eqref{eq:1} satisfy assumption \ref{as:1}, RTE system \eqref{eq:1} admits unique solutions $\phi_x(x,\theta)$ and $\phi_m(x,\theta)$ that satisfy
  \begin{equation}
    \label{eq:42}
    \norm{\phi_x}_p+\norm{\phi_m}_p\leq c_5\norm{q_b}_{\mathcal{L}^p(\Gamma_-,|\theta\cdot \nu|)}
  \end{equation}
for some $c_5>0$ depending on $\mu_{a,x}$, $\mu_{a,m}$ and $\Omega$.
\end{theorem}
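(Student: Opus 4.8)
The plan is to exploit the triangular structure of system \eqref{eq:1}: the first line for $\phi_x$ is self-contained, while the second line for $\phi_m$ is a stationary RTE of the form \eqref{eq:2} whose source is manufactured from $\phi_x$. Thus the entire estimate reduces to two applications of Lemma \ref{lem:1} chained together. First I would apply Lemma \ref{lem:1} to the first equation, reading off $\mu_a=\mu_{a,x}$, $\mu_s=\mu_{s,x}$, source $q\equiv 0$, and boundary datum $q_b$. These coefficients satisfy (i) of Assumption \ref{as:1}, so the lemma furnishes a unique $\phi_x$ and, since the source term in \eqref{eq:6} drops out, gives
\[
  \norm{\phi_x}_p\le C_1\,\norm{q_b}_{\mathcal{L}^p(\Gamma_-,|\theta\cdot\nu|)}.
\]

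The only genuinely new estimate is to control the coupling source $q:=\eta\mu_{a,xf}\tilde{\bm A}\phi_x$ of the second equation in terms of $\norm{\phi_x}_p$, which is what lets me invoke Lemma \ref{lem:1} a second time. Because $q$ is independent of $\theta$, one has $\norm{q}_p^p=S_d\int_\Omega|\eta\mu_{a,xf}\tilde{\bm A}\phi_x|^p\rd x$. Since $\frac{1}{S_d}\rd\theta$ is a probability measure on $\mathcal{S}^{d-1}$, Jensen's inequality applied to the convex map $t\mapsto|t|^p$ gives $|\tilde{\bm A}\phi_x(x)|^p\le\frac{1}{S_d}\oint_{\mathcal{S}^{d-1}}|\phi_x(x,\theta)|^p\rd\theta$. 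Combining this with the uniform bound $0<\eta,\mu_{a,xf}\le c_2$ from Assumption \ref{as:1} makes the two factors of $S_d$ cancel and yields $\norm{q}_p\le c_2^2\,\norm{\phi_x}_p$; in particular $q\in\mathcal{L}^p(X)$.

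With $q\in\mathcal{L}^p(X)$ in hand, I would apply Lemma \ref{lem:1} to the second equation, now with $\mu_a=\mu_{a,m}$, $\mu_s=\mu_{s,m}$, source $q$, and zero boundary datum $\phi_m|_{\Gamma_-}=0$. This produces a unique $\phi_m$ with $\norm{\phi_m}_p\le C_2\norm{q}_p\le C_2 c_2^2\norm{\phi_x}_p$. Chaining the three inequalities and adding the bound for $\norm{\phi_x}_p$ gives $\norm{\phi_x}_p+\norm{\phi_m}_p\le c_5\,\norm{q_b}_{\mathcal{L}^p(\Gamma_-,|\theta\cdot\nu|)}$ with $c_5=C_1(1+C_2 c_2^2)$ depending only on the coefficients $\mu_{a,x},\mu_{a,m}$ and on $\Omega$, as claimed. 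Uniqueness propagates along the cascade: $\phi_x$ is unique by the first application of the lemma, this determines $q$ uniquely, and then $\phi_m$ is unique by the second application. The main (and rather mild) obstacle is precisely the verification that the collecting operator $\tilde{\bm A}$ does not inflate the $\mathcal{L}^p$ norm; this is exactly where the $1/S_d$ normalization and Jensen's inequality enter, while everything else is a direct invocation of Lemma \ref{lem:1}.
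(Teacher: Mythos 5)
Your argument is correct and is essentially the paper's own proof: the paper's entire argument is the one-line observation that $\norm{\eta\mu_{a,xf}(\tilde{\bm{A}}\phi_x)}\leq c'\norm{\eta}\norm{\mu_{a,xf}}\norm{\phi_x}$ followed by two invocations of Lemma \ref{lem:1}, exploiting exactly the triangular structure you describe. You have merely supplied the details the paper omits (the Jensen-inequality bound on $\tilde{\bm{A}}$ and the explicit chaining of constants), so there is nothing to criticize.
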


\begin{proof}
  Owing to $\norm{\eta\mu_{a,xf}(\tilde{\bm{A}}\phi_x)}\leq c'\norm{\eta}\norm{\mu_{a,xf}}\norm{\phi_x}$, using lemma \ref{lem:1}, the result is obvious.

\end{proof}

By lemma \ref{the:1}, the solutions of \eqref{eq:1} $\phi_x$ and $\phi_m$ depend continuously on boundary $q_b$. By this result, $\phi_x$ and $\phi_m$ depend continuously on $\mu_{a,xf}$. Furthermore, so internal data $h$ does, which is proved in~\cite{Ren2015}

\section{Hybrid reconstruction of fluorescence absorption coefficient $\mu_{a,xf}$}
\label{sec:3}
In this section, we suggest hybrid algorithm on the reconstruction of $\mu_{a,xf}$ combined with SIM and the nonlinear optimization method. Firstly, we present SIM algorithm, nonlinear optimization algorithm and hybrid algorithm. Then, update scheme for multi-measurement data is derived.

\subsection{The SIM algorithm}
\label{sec:3.1}
For convenience, we omit the subscript '$s$' on measurement in section \ref{sec:3.1} and \ref{sec:3.3}. From the definition \eqref{eq:4} of $h(x)$, given exact data $h^*$, we easily get a fixed-point iteration scheme
\begin{equation*}
  \begin{aligned}
    \mu_{a,xf}^{i+1}(x)=\bm{F}_1(\mu_{a,xf}^i)(x):&=\frac{1}{1-\eta(x)}\left(\frac{h^*(x)-\mu_{a,m}(\bm{A}\phi_m^i)(x)}{(\bm{A}\phi_x^i)(x)}-\mu_{a,xi}(x)\right)\\
& =\frac{1}{1-\eta}\left(\frac{h^*-\mu_{a,m}\bm{A}(\bm{T_2}(\mu_{a,xf}^i))}{\bm{A}\bm{T}_1(\mu_{a,xf}^i)}\right).
  \end{aligned}
\end{equation*}
For convergence, it is often required that the initial guess is close to the true coefficient, which is also discussed in \cite{Cox2009,Cox2006,Harrison2013} for QPAT. Based on QPAT, an improved fixed-point iteration is proposed in \cite{Wang2017}, where absorption coefficient $\mu_{a}^i$ satisfies $\mu_a^i\leq\mu_a^{i+1}<\mu_a^*$. And assuming corresponding data are $h^i$ and $h^*$, it is proved that the data $h^i$ converges to $h^*$ in $\mathcal{L}^1$-norm. Heuristically, for a initial guess $\mu_{a,xf}^0<\mu_{a,xf}^*$, we update $\mu_{a,xf}^i$ by
\begin{equation}
  \label{eq:alg1:1}
 \mu_{a,xf}^{i+1}(x)=\bm{F}_2(\mu_{a,xf}^i)(x):=\max\left\{\mu_{a,xf}^i(x),\bm{F}_1(\mu_{a,xf}^i)\right\}.
\end{equation}
Despite the scheme \eqref{eq:alg1:1} is similar with the improved fixed-point iterative method in \cite{Wang2017}, the boundedness of sequence ($\mu_{a,xf}^i<\mu_{a,xf}^*$) is not guaranteed theoretically. So monotonically increasing sequence $\mu_{a,xf}^i$ may exceed and even stay away from $\mu_{a,xf}^*$. Therefore, we propose its variant SIM, see algorithm \ref{alg:2}.

\begin{algorithm}[H]
\caption{Squeeze iteration method (SIM)\label{alg:2}}
\begin{algorithmic}[1]
 \REQUIRE Given initialization $\underline{\mu_{a,xf}}^0=c_1$ and $\overline{\mu_{a,xf}}^0=c_2$ with $c_1$ and $c_2$ mentioned in assumption \ref{as:1}, data $h^*$, coefficients $\eta$, $\mu_{a,xi}$, $\mu_{a,m}$, $\mu_{s,x}$, $\mu_{s,m}$, boundary source $q_b$, and tolerance $\epsilon_1$.
\FOR {$i=0,1,\dots$}
\STATE $\underline{\phi_x}^i=\bm{T}_1(\underline{\mu_{a,xf}}^i)$, $\overline{\phi_x}^i=\bm{T}_1(\overline{\mu_{a,xf}}^i)$;
\STATE Solve the second RTE in equation \eqref{eq:1} with source terms $\eta\overline{\mu_{a,xf}}^i(\tilde{\bm{A}}\underline{\phi_x}^i)$ and $\eta\underline{\mu_{a,xf}}^i(\tilde{\bm{A}}\overline{\phi_x}^i)$ respectively to obtain $\overline{\phi_m}^i$ and $\underline{\phi_m}^i$; 
\STATE 
\begin{equation}
  \label{eq:alg2:1}
  \underline{\mu_{a,xf}}^{i+1}(x)=\max\left\{\underline{\mu_{a,xf}}^i(x),\frac{1}{1-\eta}\left(\frac{h^*(x)-\mu_{a,m}(\bm{A}\overline{\phi_m}^i)(x)}{(\bm{A}\underline{\phi_x}^i)(x)}-\mu_{a,xi}(x)\right)\right\},
\end{equation}
 \begin{equation}
   \label{eq:alg2:2}
   \overline{\mu_{a,xf}}^{i+1}(x)=\min\left\{\overline{\mu_{a,xf}}^i(x),\frac{1}{1-\eta}\left(\frac{h^*(x)-\mu_{a,m}(\bm{A}\underline{\phi_m}^i)(x)}{(\bm{A}\overline{\phi_x}^i)(x)}-\mu_{a,xi}(x)\right)\right\};
 \end{equation}
\STATE If $\norm{\underline{\mu_{a,xf}}^{i+1}-\underline{\mu_{a,xf}}^{i}}/\norm{\underline{\mu_{a,xf}}^{i}}<\epsilon_1$ and $\norm{\overline{\mu_{a,xf}}^{i+1}-\overline{\mu_{a,xf}}^{i}}/\norm{\overline{\mu_{a,xf}}^{i}}<\epsilon_1$, end up with $\mu_{a,xf}=\underline{\mu_{a,xf}}^{i+1}$; otherwise, go to step 2.
\ENDFOR
\end{algorithmic}
\end{algorithm}

We claim that sequences $\{\underline{\mu_{a,xf}^i(x)}\}$ and $\{\overline{\mu_{a,xf}}^i(x)\}$ are bounded monotonic and therefore converged. First of all, we present several properties of stationary RTE.
\begin{theorem}[\cite{Case1963}]
  \label{th:2}
  For RTE \eqref{eq:2} with bounded absorption and scattering coefficient, if source $ 0\leq q\in\mathcal{L}^\infty$ and boundary source $0\leq q_b\in\mathcal{L}^\infty$, there exists unique non-negative solution $\phi(x,\theta)$.
\end{theorem}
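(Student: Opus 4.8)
The plan is to obtain existence through a positivity-preserving Neumann (Peierls) expansion and to borrow uniqueness directly from Lemma~\ref{lem:1}. First I would isolate the streaming part of \eqref{eq:2} by setting $\bm{L}:=\theta\cdot\nabla+\mu_a+\mu_s$, so the equation becomes $\bm{L}\phi=\mu_s\bm{K}\phi+q$ with inflow data $\phi|_{\Gamma_-}=q_b$. The operator $\bm{L}$, together with the inflow condition, inverts explicitly along characteristics: integrating the ODE $\tfrac{d}{dt}\phi(x-t\theta,\theta)+(\mu_a+\mu_s)\phi=g$ backward from the point where the ray $x-t\theta$ enters $\Omega$ expresses $\bm{L}^{-1}g$ as a line integral of $g$ against the attenuation factor $\exp(-\!\int(\mu_a+\mu_s))$, plus a boundary term carrying $q_b$ with the same positive factor. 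Since $\Omega$ is bounded and convex, every backward ray has a finite chord length $\tau(x,\theta)\le\mathrm{diam}(\Omega)$ and a well-defined entry point.

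The structural point is that each ingredient preserves non-negativity. The attenuation factor is strictly positive, so $\bm{L}^{-1}$ sends a non-negative source together with non-negative inflow data to a non-negative function; the H--G kernel $f\ge 0$ makes $\bm{K}$ positivity-preserving; and $\mu_s\ge c_1>0$ by assumption~\ref{as:1}. I would therefore take $\phi^{0}$ to solve the streaming problem with source $q$ and inflow $q_b$, and iterate $\phi^{n+1}:=\bm{L}^{-1}(\mu_s\bm{K}\phi^{n}+q)$ keeping the inflow $q_b$. By induction each $\phi^{n}\ge 0$, since every step applies positivity-preserving operators to non-negative inputs.

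To pass to the limit I would note that the successive differences $\phi^{n+1}-\phi^{n}$ solve a streaming problem with zero inflow and source $\mu_s\bm{K}(\phi^{n}-\phi^{n-1})$, so convergence reduces to a contraction estimate for the linear gain operator $\bm{G}\psi:=\bm{L}^{-1}_{0}(\mu_s\bm{K}\psi)$ (zero inflow). Once $\|\bm{G}\|_\infty<1$ is established, $\phi^{n}\to\phi$ in $\mathcal{L}^\infty(X)$, the limit $\phi$ solves \eqref{eq:2}, and being the limit of the non-negative iterates $\phi^{n}$ it is itself non-negative. Uniqueness is not re-proved here: applying Lemma~\ref{lem:1} to the difference of two solutions, whose source and inflow data both vanish, forces that difference to be zero by \eqref{eq:6}, so the unique solution coincides with this non-negative limit.

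The hard part is the uniform contraction estimate for $\bm{G}$. The normalization $\oint f(\theta,\theta')\rd\theta'=1$ in \eqref{eq:9} only yields $\|\bm{K}\psi\|_\infty\le\|\psi\|_\infty$, so the strict inequality $\|\bm{G}\|_\infty<1$ must be extracted from the streaming inversion. Along each backward characteristic one has $\int_0^{\tau(x,\theta)}\mu_s(x-t\theta)\,e^{-\int_0^t(\mu_a+\mu_s)\rd s}\rd t\le 1-e^{-\int_0^{\tau}(\mu_a+\mu_s)\rd s}$, using $\mu_s\le\mu_a+\mu_s$; the right-hand side is bounded away from $1$ \emph{uniformly} in $(x,\theta)$ because $\tau\le\mathrm{diam}(\Omega)$ and $\mu_a+\mu_s\le 2c_2$, giving the explicit constant $r=1-e^{-2c_2\,\mathrm{diam}(\Omega)}<1$. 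Thus the dissipation needed for convergence comes from leakage through the boundary of the bounded convex domain together with the finite attenuation length, and this uniform $r<1$ closes the argument; the delicate point is precisely checking that the supremum over grazing and long rays stays strictly below $1$.
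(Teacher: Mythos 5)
Your argument is correct, but it is worth noting that the paper does not prove Theorem~\ref{th:2} at all: it is imported as a classical result with a citation to Case--Zweifel, so there is no internal proof to compare against. What you have written is essentially a self-contained rendition of the standard source-iteration (Peierls/Neumann-series) argument that underlies that reference. The two load-bearing steps both check out. For positivity, the characteristic inversion of $\bm{L}=\theta\cdot\nabla+\mu_a+\mu_s$ with inflow data is a positive integral operator and $\bm{K}$ has a non-negative kernel, so induction on the iterates and uniform convergence of the limit give $\phi\ge 0$. For the contraction, the normalization \eqref{eq:9} gives $\|\bm{K}\psi\|_\infty\le\|\psi\|_\infty$, and your bound $\int_0^{\tau}\mu_s e^{-\int_0^t(\mu_a+\mu_s)\rd s}\rd t\le 1-e^{-\int_0^{\tau}(\mu_a+\mu_s)\rd s}\le 1-e^{-2c_2\,\mathrm{diam}(\Omega)}=:r<1$ is exact, since the integrand with $\mu_s$ replaced by $\mu_a+\mu_s$ integrates to precisely $1-e^{-\int_0^\tau(\mu_a+\mu_s)}$; the dissipation really does come from boundary leakage on the bounded convex domain, and it survives even if $\mu_a$ were allowed to vanish. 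Two small remarks: once $\|\bm{G}\|_\infty\le r<1$ you get uniqueness in $\mathcal{L}^\infty$ for free from the Banach fixed-point theorem, so the appeal to Lemma~\ref{lem:1} (which additionally needs the lower bound $\mu_a\ge c_1$ from Assumption~\ref{as:1}) is redundant though harmless; and a fully rigorous write-up would spend a line on the equivalence of the mild (integral-along-characteristics) formulation with the differential form of \eqref{eq:2} and on the measurability of the chord length $\tau(x,\theta)$, both of which are standard for a bounded convex Lipschitz domain. What your route buys is an explicit, quantitative contraction constant and a constructive scheme (the source iteration is in fact what numerical transport solvers implement); what the paper's route buys is brevity, at the cost of leaving the positivity mechanism invisible to the reader.
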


\begin{lem}[\cite{Wang2017}]
\label{lem:2}
  Let $\phi^1(x)$ and $\phi^2(x)$ be the solutions of RTEs
  \begin{equation}
    \label{eq:9}\left\{
    \begin{aligned}
      &\left[\theta\cdot \nabla +\mu_a(x)+\mu_s(x)\right]\phi(x,\theta)-\mu_s(x)(\bm{K}\phi)(x,\theta)=0,\ && (x,\theta)\in \Omega\times \mathcal{S}^{n-1},\\
&\phi(x,\theta)=q_b(x,\theta), &&(x,\theta)\in \Gamma_-,
    \end{aligned}
\right.
  \end{equation}
with $\mu_a$ being $\mu_a^1$ and $\mu_a^2$ respectively.
Then $\phi^1(x,\theta)\geq \phi^2(x,\theta)$ provided $\mu_a^1(x)\leq \mu_a^2(x)(\forall x\in \Omega)$. Note that the superscripts of $\mu_a^1$ and $\mu_a^2$ are only to distinguish the different absorption coefficients.
\end{lem}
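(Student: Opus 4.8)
The plan is to reduce the comparison $\phi^1 \geq \phi^2$ to a non-negativity statement for a single transport equation. I would set $w := \phi^1 - \phi^2$ and show that $w$ itself solves a stationary RTE of the form \eqref{eq:2}, with absorption coefficient $\mu_a^1$, a non-negative internal source, and vanishing inflow data. Once this is established, the sign of $w$ is controlled by the non-negativity principle of Theorem \ref{th:2}, which is exactly the conclusion we want.

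First I would subtract the two transport equations. Since $\phi^1$ and $\phi^2$ share the same scattering coefficient $\mu_s$ and the same kernel (hence the same operator $\bm{K}$) and the same boundary datum $q_b$, the only difference comes from the absorption term. Using the algebraic identity $\mu_a^1\phi^1 - \mu_a^2\phi^2 = \mu_a^1 w - (\mu_a^2-\mu_a^1)\phi^2$, a direct computation gives
\begin{equation*}
\left[\theta\cdot\nabla + \mu_a^1(x) + \mu_s(x)\right]w(x,\theta) - \mu_s(x)(\bm{K}w)(x,\theta) = (\mu_a^2(x) - \mu_a^1(x))\,\phi^2(x,\theta),
\end{equation*}
together with $w|_{\Gamma_-} = q_b - q_b = 0$. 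Next I would check that the right-hand side is non-negative: since $0\leq q_b\in\mathcal{L}^\infty$ and the coefficients are bounded, Theorem \ref{th:2} applied to the equation for $\phi^2$ yields $\phi^2 \geq 0$, and the hypothesis $\mu_a^1 \leq \mu_a^2$ gives $\mu_a^2 - \mu_a^1 \geq 0$; hence the source $(\mu_a^2-\mu_a^1)\phi^2 \geq 0$. Note also that this source lies in $\mathcal{L}^p(X)$ because $\phi^2\in\mathcal{L}^p(X)$ by Lemma \ref{lem:1} and $\mu_a^2-\mu_a^1$ is bounded, so $w$ is a genuine $\mathcal{L}^p$ solution of \eqref{eq:2}.

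Finally I would apply Theorem \ref{th:2} to the equation for $w$: with a non-negative source and non-negative (indeed zero) inflow data, it produces a non-negative solution, and the uniqueness in Lemma \ref{lem:1} forces that solution to coincide with $w$. Therefore $w = \phi^1-\phi^2 \geq 0$, i.e. $\phi^1 \geq \phi^2$. The main delicacy here is precisely this interplay between the two structural results: the non-negativity principle only asserts that \emph{some} non-negative solution exists, so it is the uniqueness of Lemma \ref{lem:1} that pins the a priori sign-indefinite difference $w$ down to be that non-negative object. A self-contained alternative would be to test the equation for $w$ against $w_-|w_-|^{p-2}$ (where $w_-$ is the negative part) and run the same $\mathcal{L}^p$ energy estimate as in the proof of Lemma \ref{lem:1} to conclude $\norm{w_-}_p = 0$, but invoking Theorem \ref{th:2} is the shortest route.
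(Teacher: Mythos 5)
Your argument is correct: subtracting the two equations gives a transport equation for $w=\phi^1-\phi^2$ with absorption $\mu_a^1$, zero inflow data, and the non-negative source $(\mu_a^2-\mu_a^1)\phi^2$, after which Theorem \ref{th:2} (non-negativity) combined with uniqueness yields $w\geq 0$. Note that the paper itself gives no proof of this lemma --- it is imported verbatim from \cite{Wang2017} and only gestured at in the remark following Theorem \ref{th:2} (``monotonicity with respect to the source implies monotonicity with respect to $\mu_a$'') --- so your comparison-principle argument is precisely the standard reasoning being invoked, and it correctly identifies the one delicate point, namely that the uniqueness statement is what pins the sign-indefinite difference $w$ down to the non-negative solution produced by Theorem \ref{th:2}.
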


\begin{remark}
  Essentially, theorem \ref{th:2} implies the monotonicity of the solution $\phi$ of RTE with respect to source, including source term $q$ and boundary $q_b$. Naturally, we conclude that $\phi$ is monotonic with respect to absorption coefficient $\mu_a$.
\end{remark}
Due to the monotonic relationship between the absorption coefficient and the solution of corresponding RTE, we can get the monotonicity of the sequences obtained in algorithm \ref{alg:2} in following.
\begin{theorem}
  \label{th:3}
If $\underline{\mu_{a,xf}}^0<\mu_{a,xf}^*<\overline{\mu_{a,xf}}^0$ and other optical coefficients and boundary satisfy assumption \ref{as:1}, then sequences $\{\underline{\mu_{a,xf}}^i\}_{i=0}^\infty$, $\{\overline{\mu_{a,xf}}^i\}_{i=0}^\infty$, $\{\underline{\phi_x}^i\}_{i=0}^\infty$, $\{\overline{\phi_x}^i\}_{i=0}^\infty$, $\{\underline{\phi_m}^i\}_{i=0}^\infty$ and $\{\overline{\phi_m}^i\}_{i=0}^\infty$ from algorithm \ref{alg:2} satisfy
\begin{align}
\label{eq:43}
  &\underline{\mu_{a,xf}}^0\leq\underline{\mu_{a,xf}}^1\leq\dots\leq\underline{\mu_{a,xf}}^i\leq\dots\leq\underline{\mu_{a,xf}}^*\leq\dots<\overline{\mu_{a,xf}}^i\leq\dots\leq\overline{\mu_{a,xf}}^1\leq\overline{\mu_{a,xf}}^0,\\
\label{eq:44}
&\underline{\phi_x}^0\geq\underline{\phi_x}^1\geq\dots\geq\underline{\phi_x}^i\geq\dots\geq\underline{\phi_x}^*\geq\dots\geq\overline{\phi_x}^i\geq\dots\geq\overline{\phi_x}^1\geq\overline{\phi_x}^0,\\
\label{eq:45}
&\underline{\phi_m}^0\leq\underline{\phi_m}^1\leq\dots\leq\underline{\phi_m}^i\leq\dots\leq\underline{\phi_m}^*\leq\dots<\overline{\phi_m}^i\leq\dots\leq\overline{\phi_m}^1\leq\overline{\phi_m}^0.
\end{align}

\end{theorem}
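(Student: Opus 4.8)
The plan is to prove all three chains \eqref{eq:43}--\eqref{eq:45} by a single induction on $i$, establishing at each level the two-sided sandwich of the coefficient iterates around $\mu_{a,xf}^*$ together with the induced orderings of the transport fields. The base case is immediate: $\underline{\mu_{a,xf}}^0=c_1<\mu_{a,xf}^*<c_2=\overline{\mu_{a,xf}}^0$ by hypothesis and assumption \ref{as:1}. The monotonicity in $i$ of the coefficient sequences, namely $\underline{\mu_{a,xf}}^i\leq\underline{\mu_{a,xf}}^{i+1}$ and $\overline{\mu_{a,xf}}^{i+1}\leq\overline{\mu_{a,xf}}^i$, is automatic from the $\max$ and $\min$ in \eqref{eq:alg2:1}--\eqref{eq:alg2:2}; the real content is that neither sequence ever crosses $\mu_{a,xf}^*$.

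For the inductive step I would assume $\underline{\mu_{a,xf}}^i\leq\mu_{a,xf}^*\leq\overline{\mu_{a,xf}}^i$ and first propagate this to the fields. Since the excitation absorption is $\mu_{a,x}=\mu_{a,xi}+\mu_{a,xf}$, lemma \ref{lem:2} applied to step 2 of algorithm \ref{alg:2} gives $\underline{\phi_x}^i\geq\phi_x^*\geq\overline{\phi_x}^i$. For the emission fields I would invoke the source-monotonicity recorded in the remark after lemma \ref{lem:2}: $\overline{\phi_m}^i$ is generated by the source $\eta\overline{\mu_{a,xf}}^i(\tilde{\bm{A}}\underline{\phi_x}^i)$, which dominates the true emission source $\eta\mu_{a,xf}^*(\tilde{\bm{A}}\phi_x^*)$ because both factors are larger and all quantities are non-negative by theorem \ref{th:2}; hence $\overline{\phi_m}^i\geq\phi_m^*$, and symmetrically $\underline{\phi_m}^i\leq\phi_m^*$.

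The crux is then to show the raw update defining $\underline{\mu_{a,xf}}^{i+1}$ does not exceed $\mu_{a,xf}^*$. Rearranging \eqref{eq:heat} at the true coefficient yields the closed form $\mu_{a,xf}^*=\frac{1}{1-\eta}\big(\frac{h^*-\mu_{a,m}\bm{A}\phi_m^*}{\bm{A}\phi_x^*}-\mu_{a,xi}\big)$, which I would compare termwise with the bracketed expression in \eqref{eq:alg2:1}. The numerator satisfies $h^*-\mu_{a,m}\bm{A}\overline{\phi_m}^i\leq h^*-\mu_{a,m}\bm{A}\phi_m^*$ from $\overline{\phi_m}^i\geq\phi_m^*$, while the denominator satisfies $\bm{A}\underline{\phi_x}^i\geq\bm{A}\phi_x^*>0$, so the quotient decreases. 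The delicate point, and the main obstacle, is that numerator and denominator move simultaneously, so to chain the two inequalities I must know the common numerator $h^*-\mu_{a,m}\bm{A}\phi_m^*$ is strictly positive; this follows from $\mu_{a,xf}^*>0$ and $\mu_{a,xi}>0$ read off the closed form. Using $0<\eta<1$ to keep $\tfrac{1}{1-\eta}>0$, I conclude the raw update is $\leq\mu_{a,xf}^*$, and taking the $\max$ with $\underline{\mu_{a,xf}}^i\leq\mu_{a,xf}^*$ preserves the bound; the upper iterate is handled symmetrically, completing the induction for \eqref{eq:43}.

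Finally, chains \eqref{eq:44}--\eqref{eq:45} fall out of the coefficient ordering just established. Once $\{\underline{\mu_{a,xf}}^i\}$ is known to be increasing and $\{\overline{\mu_{a,xf}}^i\}$ decreasing, lemma \ref{lem:2} makes $\{\underline{\phi_x}^i\}$ decreasing and $\{\overline{\phi_x}^i\}$ increasing, both squeezing $\phi_x^*$; feeding these monotone fields into the emission sources and applying source-monotonicity once more shows $\{\underline{\phi_m}^i\}$ increasing and $\{\overline{\phi_m}^i\}$ decreasing, sandwiching $\phi_m^*$. Monotonicity together with the two-sided bounds then yields convergence of every sequence, which is the assertion of the theorem.
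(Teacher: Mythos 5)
Your proof follows essentially the same inductive argument as the paper: sandwich the coefficient iterates around $\mu_{a,xf}^*$, propagate that ordering to the excitation fields via lemma \ref{lem:2} and to the emission fields via the source monotonicity of theorem \ref{th:2}, and then compare the raw update in \eqref{eq:alg2:1}--\eqref{eq:alg2:2} against the closed form of $\mu_{a,xf}^*$ obtained from \eqref{eq:heat}. You are in fact slightly more careful than the paper, which labels the key quotient comparison ``obvious'' without recording that it relies on the nonnegativity of $h^*-\mu_{a,m}\bm{A}\phi_m^*$ and on $0<\eta<1$.
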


\begin{proof}
We assume \eqref{eq:43}, \eqref{eq:44} and \eqref{eq:45} hold for $i$. Then from $\underline{\phi_x}^i>0$, obviously we have
\begin{equation*}
  \frac{1}{1-\eta}\left(\frac{h^*(x)-\mu_{a,m}(\bm{A}\overline{\phi_m}^i)(x)}{(\bm{A}\underline{\phi_x}^i)(x)}-\mu_{a,xi}(x)\right)\leq \frac{1}{1-\eta}\left(\frac{h^*(x)-\mu_{a,m}(\bm{A}\phi_m^*)(x)}{(\bm{A}\phi_x^*)(x)}-\mu_{a,xi}(x)\right)=\mu_{a,xf}^*.
\end{equation*}
Combining $\underline{\mu_{a,xf}}^i\leq \mu_{a,xf}^*$, naturally $\underline{\mu_{a,xf}}^i\leq\underline{\mu_{a,xf}}^{i+1}\leq \mu_{a,xf}^*$ holds, which implies
\begin{equation*}
 \underline{\phi_x}^{i}\geq\underline{\phi_x}^{i+1}\geq\phi_x^*
\end{equation*}
from lemma \ref{lem:2}. Similarly,
\begin{equation*}
  \frac{1}{1-\eta}\left(\frac{h^*(x)-\mu_{a,m}(\bm{A}\underline{\phi_m}^i)(x)}{(\bm{A}\overline{\phi_x}^i)(x)}-\mu_{a,xi}(x)\right)\geq \frac{1}{1-\eta}\left(\frac{h^*(x)-\mu_{a,m}(\bm{A}\phi_m^*)(x)}{(\bm{A}\phi_x^*)(x)}-\mu_{a,xi}(x)\right)=\mu_{a,xf}^*
\end{equation*}
indicates
\begin{equation*}
  \overline{\mu_{a,xf}}^i\geq\overline{\mu_{a,xf}}^{i+1}\geq \mu_{a,xf}^*.
\end{equation*}
Then easily we have
\begin{equation*}
  \eta\overline{\mu_{a,xf}}^{i}(\tilde{\bm{A}}\underline{\phi_x}^{i})\geq\eta\overline{\mu_{a,xf}}^{i+1}(\tilde{\bm{A}}\underline{\phi_x}^{i+1})\geq \eta\mu_{a,xf}^*(\tilde{\bm{A}}\phi_x^*),
\end{equation*}
which induces
\begin{equation*}
  \overline{\phi_m}^{i}\geq\overline{\phi_m}^{i+1}\geq\phi_m^*
\end{equation*}
by theorem \ref{th:2}.
Using the same way, \eqref{eq:43}, \eqref{eq:44} and \eqref{eq:45} holds for $\overline{\phi_x}^i$, $\overline{\mu_{a,xf}}^i$ and $\underline{\phi_m^i}$.

Therefore it completes the proof.
\end{proof}

\subsection{Nonlinear optimization method}
\label{sec:3.2}
As we all know, optimization method as a state-of-art can relatively stably minimize error function in image reconstruction generally. Here, we use log-type function
\begin{equation}
\label{eq:49}
  \mathcal{F}(\mu_{a,xf})=\frac{1}{2}\sum_{s=0}^{S-1}\norm{\log (\bm{H}_s(\mu_{a,xf})-\log(h^*_s))}_2^2,
\end{equation}
as our error function. Compared with more widely used least square error function $\frac{1}{2}\sum_{s=0}^{S-1}\norm{ \bm{H}_s(\mu_{a,xf})-h_s^*}_2^2$, log-type function accelerates convergence~\cite{Tarvainen2012}. And some discussion about log-type function can be found in~\cite{Tarvainen2012,Wang2017}. For fixed $\mu_{a,xf}$ and any feasible direction $h_f$ (there exists $\delta>0$ such that for any $0<s<\delta$, $\mu_{a,xf}+sh_f\in\mathcal{D}(\Omega)$), the directional derivative $\nabla\mathcal{F}$ of $\mathcal{F}$ is defined by
\begin{equation}
  \label{eq:50}
  \mathcal{F}'(\mu_{a,xf})(h_f)=\left<\nabla\mathcal{F},h_f\right>_{\mathcal{L}^2(\Omega)}.
\end{equation}
From \cite{Ren2015}, the directional derivative of $\bm{H}_s(\mu_{a,xf})(h_f)$ exists with respect to $\mu_{a,xf}$ in any feasible direction $h_f$, so \eqref{eq:50} is well-defined. In order to handle the implicit derivative, adjoint method is applied to get the gradient of log-type error function, detailed in appendix A. And for saving time, we take BB stepsize as our step in the direction of negative gradient to avoid linesearch which needs solve RTEs \eqref{eq:1} for several times. BB stepsize takes the value of
\begin{equation*}
  s_{k1}=\frac{(\mu_{a,xf}^k-\mu_{a,xf}^{k-1})^\top(\nabla\mathcal{F}_k-\nabla\mathcal{F}_{k-1})}{\norm{\nabla\mathcal{F}_k-\nabla\mathcal{F}_{k-1}}^2}
\end{equation*}
or
\begin{equation*}
  s_{k2}=\frac{\norm{\mu_{a,xf}^k-\mu_{a,xf}^{k-1}}^2}{(\mu_{a,xf}^k-\mu_{a,xf}^{k-1})^\top(\nabla\mathcal{F}_k-\nabla\mathcal{F}_{k-1})},
\end{equation*}
where $\nabla\mathcal{F}_k$ is the gradient of $\mathcal{F}$ when $\mu_{a,xf}=\mu_{a,xf}^k$~\cite{Barzilai1988}. And then the update scheme is
\begin{equation}
\label{eq:bbupdate}
  \mu_{a,xf}^{k+1}=\mu_{a,xf}^k-s_k\nabla\mathcal{F}_k.
\end{equation}
Based on BB stepsize, the nonlinear optimization method is presented in algorithm \ref{alg:3}.

\begin{algorithm}[H]
\caption{Nonlinear optimization method\label{alg:3}}
\begin{algorithmic}[1]
 \REQUIRE Given initialization $\underline{\mu_{a,xf}}^0=c_1$ mentioned in assumption \ref{as:1}, data $h^*_s\ (s=0,1,\dots,S-1)$, coefficients $\eta$, $\mu_{a,xi}$, $\mu_{a,m}$, $\mu_{s,x}$, $\mu_{s,m}$, boundary source $q_b\ (s=0,1,\dots,S-1)$, tolerance $\epsilon_1$ and $\epsilon_2$.
\FOR {$k=0,1,\dots$}
\STATE For $s=0,1,\dots,S-1$, calculate $\phi_{x,s}^k=\bm{T}_{1,s}(\mu_{a,xf}^k)$, $\phi_{m,s}^k=\bm{T}_{2,s}(\mu_{a,xf}^k)$, $h^k_s=\bm{H}_s(\mu_{a,xf}^k)$, $\mathcal{F}_k=\mathcal{F}(\mu_{a,xf}^k)$;
\STATE If $\mathcal{F}_k<\epsilon_1$, end up with $\mu_{a,xf}=\mu_{a,xf}^k$; otherwise go to next step; 
\STATE Solve adjoint RTE \eqref{eq:34}, \eqref{eq:35}, then obtain gradient $\nabla\mathcal{F}_k$ from \eqref{eq:36}.
\STATE If $\norm{\nabla\mathcal{F}_k}<\epsilon_2$, end up with $\mu_{a,xf}=\mu_{a,xf}^{k+1}$; otherwise go to next step.
\STATE If $k=0$, take $s_0$ small enough to ensure $\mathcal{F}_k$ decrease in the direction of $-\nabla\mathcal{F}_k$; otherwise take $s_k$ as $s_{k1}$ or $s_{k2}$. Then using \eqref{eq:bbupdate} to update $\mu_{a,xf}$.
\ENDFOR
\end{algorithmic}
\end{algorithm}

\begin{remark}
  In fact, the algorithm \ref{alg:3} also can be used to recover $\mu_{a,xf}$ and $\eta$ simultaneously, and the gradient of error function \eqref{eq:49} with respective to $\eta$ is also deduced in appendix A.
\end{remark}

\subsection{Hybrid method}
\label{sec:3.3}

In simulations, we find that the algorithm \ref{alg:2} converges quickly at first steps, but then the relative error increases after arriving minimum, see section \ref{sec:4.1}. In fact, even though $\underline{\mu_{a,xf}}^i\leq \mu_{a,xf}^*\leq\overline{\mu_{a,xf}}^i$ holds theoretically, it may still not hold in synthetic simulations. To stabilize the algorithm and attain higher accuracy, the optimization method that is considered stable is incorporated, see hybrid algorithm \ref{alg:4}.

\begin{algorithm}[H]
\caption{Hybrid method\label{alg:4}}
\begin{algorithmic}[1]
 \REQUIRE Given initialization $\underline{\mu_{a,xf}}^0=c_1$ and $\overline{\mu_{a,xf}}^0=c_2$ with $c_1$ and $c_2$ mentioned in assumption \ref{as:1}, data $h^*$, coefficients $\eta$, $\mu_{a,xi}$, $\mu_{a,m}$, $\mu_{s,x}$, $\mu_{s,m}$, boundary source $q_b$, tolerance $\epsilon_1$, $\epsilon_2$ and $\epsilon_3$.
\FOR {$i=0,1,\dots$}
\STATE $\underline{\phi_x}^i=\bm{T}_1(\underline{\mu_{a,xf}}^i)$ and $\overline{\phi_x}^i=\bm{T}_1(\overline{\mu_{a,xf}}^i)$.
\STATE Solve the second RTE in equation \eqref{eq:1} with source terms $\eta\overline{\mu_{a,xf}}^i(\tilde{\bm{A}}\underline{\phi_x}^i)$ and $\eta\underline{\mu_{a,xf}}^i(\tilde{\bm{A}}\overline{\phi_x}^i)$ to obtain $\overline{\phi_m}$ and $\underline{\phi_m}^i$; 
\STATE 
\begin{equation}
  \label{eq:alg4:1}
  \underline{\mu_{a,xf}}^{i+1}(x)=\max\left\{\underline{\mu_{a,xf}}^i(x),\frac{1}{1-\eta}\left(\frac{h^*(x)-\mu_{a,m}(\bm{A}\overline{\phi_m}^i)(x)}{(\bm{A}\underline{\phi_x}^i)(x)}-\mu_{a,xi}(x)\right)\right\}
\end{equation}
\begin{equation}
  \label{eq:alg4:2}
  \overline{\mu_{a,xf}}^{i+1}(x)=\min\left\{\overline{\mu_{a,xf}}^i(x),\frac{1}{1-\eta}\left(\frac{h^*(x)-\mu_{a,m}(\bm{A}\underline{\phi_m}^i)(x)}{(\bm{A}\overline{\phi_x}^i)(x)}-\mu_{a,xi}(x)\right)\right\}
\end{equation}
\STATE If $\norm{\underline{\mu_{a,xf}}^{i+1}-\underline{\mu_{a,xf}}^{i}}\Big/\norm{\underline{\mu_{a,xf}}^{i}}<\epsilon_1$ and $\norm{\overline{\mu_{a,xf}}^{i+1}-\overline{\mu_{a,xf}}^{i}}\Big/\norm{\overline{\mu_{a,xf}}^{i}}<\epsilon_1$, jump out of this loop with $\mu_{a,xf}^0=\underline{\mu_{a,xf}}^{i+1}$; otherwise go to step 2.
\ENDFOR
\FOR{$i=0,1,\dots$}
\STATE $\phi_x^i=\bm{T}_1(\mu_{a,xf}^i)$, $\phi_m^i=\bm{T}_2(\mu_{a,xf}^i)$, $h^i=\bm{H}(\mu_{a,xf}^i)$;
\STATE If $\mathcal{F}_i<\epsilon_2$, end up with $\mu_{a,xf}=\mu_{a,xf}^i$;
\STATE Calculate the gradient of $\nabla \mathcal{F}(\mu_{a,xf}^i)$. If $\norm{\nabla \mathcal{F}}<\epsilon_3$, end up with $\mu_{a,xf}=\mu_{a,xf}^i$; otherwise go to next step
\STATE Update $\mu_{a,xf}$ using BB stepsize.
\ENDFOR
\end{algorithmic}
\end{algorithm}

\begin{remark}
  The tolerance $\epsilon_1$ in algorithm \ref{alg:4} is generally set larger than that in algorithm \ref{alg:2}. Otherwise, due to the instability of SIM loop, $\underline{\mu_{a,xf}}^i$ may increase and go away from the true coefficient $\mu_{a,xf}^*$.
\end{remark}
By adjoint method, we need to solve two RTEs to obtain the gradient, see appendix A. Therefore, the optimization method and SIM method have the same computational cost at each step, which both need to solve RTE for four times. Optimization method is generally more stable than fixed-point iterative method. From another perspective, optimization method possibility falls into the local minimum if object function is not convex. Differently, fixed-point iteration depends more on the properties of iteration operator, and can converge to the true value if the operator is contracted. Although we cannot prove that the iteration operator of SIM is contracted, it is not expanded and sequence $\underline{\mu_{a,xf}}^i(x)$ converges due to its monotonicity and boundedness in the sense of infinite dimension. Therefore, in finite dimension, after a few steps of SIM, the sequence $\underline{\mu_{a,xf}}^i$ will soon be near the $\mu_{a,xf}^*$, and then optimization method is expected to stably approach the true value, which avoids the instability of SIM. This advantage of the hybrid method is more pronounced when the number of measurements  is small. 
\subsection{Multi-measurement case}
\label{sec:3.4}

Multi-measurement usually can be used to improve the stability in inverse problem. In QPAT, multi-measurement model has a good performance~\cite{Bal}. Omitting superscript $'*'$, assume $S$ measurements are available, denote our data matrix by
\begin{equation*}
  \bm{h}:=
  \begin{pmatrix}
   h_0&h_2&\dots&h_{S-1} 
  \end{pmatrix}
^\top.
\end{equation*}
We denote $\bm{A\phi_x}$ and $\bm{A\phi_m}$ by
\begin{equation*}
  \bm{A\phi_x}:=
  \begin{pmatrix}
    \bm{A}\phi_{x,0}&
\bm{A}\phi_{x,1}&
\dots&
\bm{A}\phi_{x,S-1}
  \end{pmatrix}^\top
\quad \text{and}\quad
  \bm{A\phi_m}:=
  \begin{pmatrix}
    \bm{A}\phi_{m,0}&
\bm{A}\phi_{m,1}&
\dots&
\bm{A}\phi_{m,S-1}
  \end{pmatrix}^\top.
\end{equation*}

Our goal is to estimate $\mu_{a,xf}$ such that
\begin{equation*}
  h_s=(\mu_{a,xi}+(1-\eta)\mu_{a,xf})(\bm{A}\phi_{x,s})+\mu_{a,m}(\bm{A}\phi_{m,s}),\quad \text{for } s=0,1,\dots, S-1.
\end{equation*}
Using least square model, we need to estimate
\begin{equation}
  \label{eq:46}
  \mu_{a,xf}:=\argmin\limits\limits_{\mu_{a,xf}\in\mathcal{D}(\Omega)}\norm{
\bm{h}
-(\mu_{a,xi}+(1-\eta)\mu_{a,xf})
\bm{A\phi_x}
-\mu_{a,m}
\bm{A\phi_m}
}_2^2.
\end{equation}
If $ \bm{A}\phi_{x,s}$ and $\bm{A}\phi_{m,s}$ are not related to $\mu_{a,xf}$, then the minimizer of \eqref{eq:46} is also the solution of 
\begin{equation}
  \label{eq:47}
  (\bm{A\phi_x})^\top(\bm{h}-\mu_{a,m}\bm{A\phi_m})=(\mu_{a,xi}+(1-\eta)\mu_{a,xf})(\bm{A\phi_x})^\top(\bm{A\phi_x}).
\end{equation}
Obviously we can estimate $\mu_{a,xf}$ using iteration
\begin{equation}
  \label{eq:48}
  \mu_{a,xf}^{i+1}=\frac{1}{1-\eta} \left(\frac{(\bm{A\phi_x}^i)^\top(\bm{h}-\mu_{a,m}\bm{A\phi_m}^i)}{(\bm{A\phi_x}^i)^\top(\bm{A\phi_x}^i)}-\mu_{a,xi}\right),
\end{equation}
where
\begin{equation*}
  \bm{A\phi_x}^i=  
  \begin{pmatrix}
    \bm{A}\phi_{x,0}^i&
\bm{A}\phi_{x,1}^i&
\dots&
\bm{A}\phi_{x,S-1}^i
  \end{pmatrix}^\top=
\begin{pmatrix}
    \bm{T}_{1,0}(\mu_{a,xf}^i)&
\bm{T}_{1,1}(\mu_{a,xf}^i)&
\dots&
\bm{T}_{1,S-1}(\mu_{a,xf}^i)
  \end{pmatrix}^\top,
\end{equation*}

\begin{equation*}
  \bm{A\phi_m}^i= 
  \begin{pmatrix}
    \bm{A}\phi_{m,0}^i&
\bm{A}\phi_{m,1}^i&
\dots&
\bm{A}\phi_{m,S-1}^i
  \end{pmatrix}^\top=
\begin{pmatrix}
    \bm{T}_{2,0}(\mu_{a,xf}^i)&
\bm{T}_{2,1}(\mu_{a,xf}^i)&
\dots&
\bm{T}_{2,S-1}(\mu_{a,xf}^i)
  \end{pmatrix}^\top.
\end{equation*}
	
Therefore, utilizing multi-measurement to iterate $\mu_{a,xf}$ is expected improve the algorithm stability. As for algorithm \ref{alg:2} and algorithm \ref{alg:4}, corresponding iteration scheme \eqref{eq:alg2:1}, \eqref{eq:alg2:2} and \eqref{eq:alg4:1}, \eqref{eq:alg4:2} can be respectively replaced by
\begin{equation}
  \label{eq:alg2:1v}
  \underline{\mu_{a,xf}}^{i+1}=\frac{1}{1-\eta} \left(\frac{(\bm{A\underline{\phi_x}}^i)^\top(\bm{h}-\mu_{a,m}\bm{A\overline{\phi_m}}^i)}{(\bm{A\underline{\phi_x}}^i)^\top(\bm{A\underline{\phi_x}}^i)}-\mu_{a,xi}\right)
\end{equation}
and
\begin{equation}
  \label{eq:alg2:2v}
  \overline{\mu_{a,xf}}^{i+1}=\frac{1}{1-\eta} \left(\frac{(\bm{A\overline{\phi_x}}^i)^\top(\bm{h}-\mu_{a,m}\bm{A\underline{\phi_m}}^i)}{(\bm{A\overline{\phi_x}}^i)^\top(\bm{A\overline{\phi_x}}^i)}-\mu_{a,xi}\right),
\end{equation}
where
\begin{equation*}
  \begin{aligned}
    & \bm{A\underline{\phi_x}}^i=  
  \begin{pmatrix}
    \bm{A}\underline{\phi_{x,0}}^i&
\bm{A}\underline{\phi_{x,1}}^i&
\dots&
\bm{A}\underline{\phi_{x,S-1}}^i
  \end{pmatrix}^\top,\\
&\bm{A\underline{\phi_m}}^i= 
  \begin{pmatrix}
    \bm{A}\underline{\phi_{m,0}}^i&
\bm{A}\underline{\phi_{m,1}}^i&
\dots&
\bm{A}\underline{\phi_{m,S-1}}^i
  \end{pmatrix}^\top,\\
& \bm{A\overline{\phi_x}}^i=  
  \begin{pmatrix}
    \bm{A}\overline{\phi_{x,0}}^i&
\bm{A}\overline{\phi_{x,1}}^i&
\dots&
\bm{A}\overline{\phi_{x,S-1}}^i
  \end{pmatrix}^\top,\\
&\bm{A\overline{\phi_m}}^i= 
  \begin{pmatrix}
    \bm{A}\overline{\phi_{m,0}}^i&
\bm{A}\overline{\phi_{m,1}}^i&
\dots&
\bm{A}\overline{\phi_{m,S-1}}^i
  \end{pmatrix}^\top.
  \end{aligned}
\end{equation*}

\section{Numerical simulations}
\label{sec:4}
In numerical simulations, given quantum efficiency $\eta(x)$, we investigate SIM algorithm \ref{alg:2}, nonlinear optimization method algorithm \ref{alg:3} and hybrid algorithm \ref{alg:4} only in 2D. The investigated region is a circle centered in $(0,0)$ with the radius 20. The anisotropic factor $g$ equals 0.9. We let $\mu_{a,xi}$ equals $\mu_{a,m}$ and $\mu_{s,x}$ equals $\mu_{s,m}$ as following:
\begin{equation*}
  \mu_{a,xi}=\mu_{a,m}=0.02+0.01\sin(\frac{\pi}{8}x)\text{, }\mu{_{s,x}=\mu{_{s,m}=2+\sin(\frac{\pi}{8}y)}}.
\end{equation*}
And they are illustrated in figure~\ref{fig:uaus}.
\begin{figure}[htpb]

    \centering
    \begin{minipage}{0.35\linewidth}
    \includegraphics[width=\textwidth]{./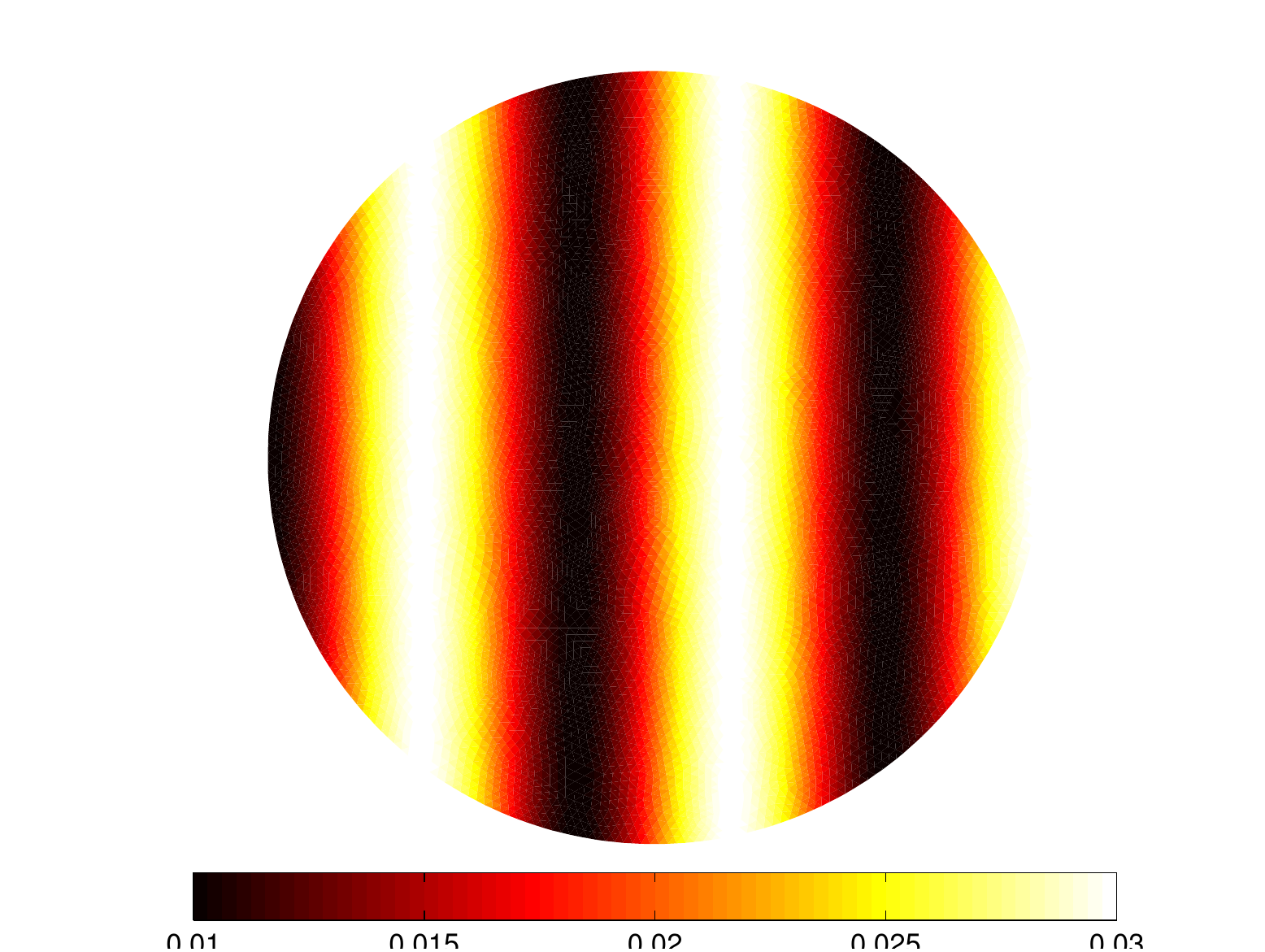}  
    \end{minipage}
\begin{minipage}{0.35\linewidth}
    \includegraphics[width=\textwidth]{./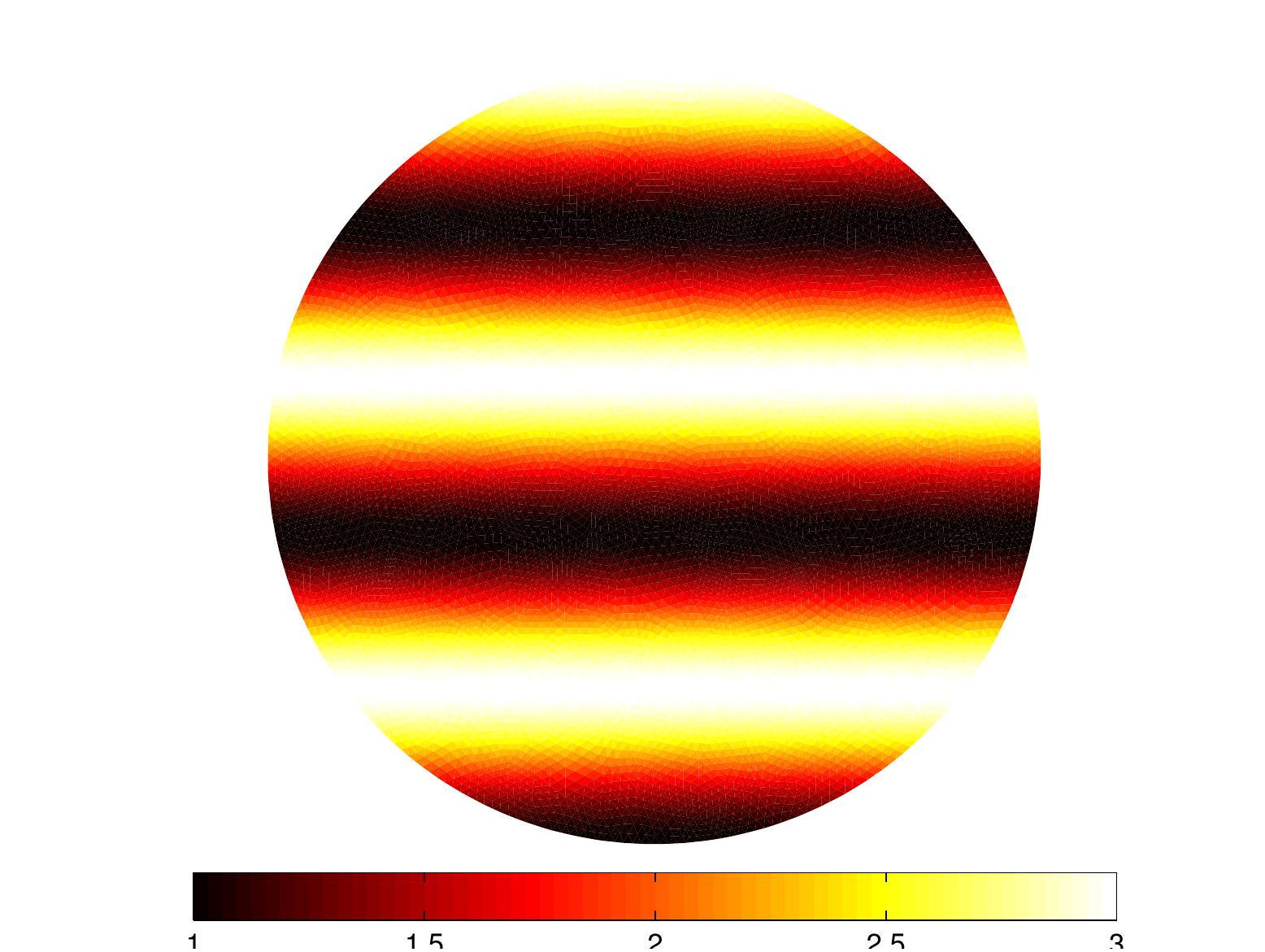}  
    \end{minipage}
 \caption{Intrinsic optical absorption coefficient $\mu_{a,xi}(\mu_{a,m})$ and scattering coefficient $\mu_{s,x}(\mu_{s,m})$\label{fig:uaus}}
\end{figure}
As for fluorescence absorption coefficient $\mu_{a,xf}$ and quantum efficiency $\eta$, we use two templates as follows:
\begin{enumerate}
\item Region $\Omega_0=\{(x,y)|x^2+y^2=20^2\}$ and five inclusions: $\Omega_1=\{(x,y)|(x+10)^2+(y-8)^2=4^2\}$, $\Omega_2=\{(x,y)|x^2+(y-8)^2=4^2\}$, $\Omega_3=\{(x,y)|(x+10)^2+(y+6)^2=4^2\}$, $\Omega_4=\{(x,y)|(x^2+(y+6)^2=4^2)\}$, and $\Omega_5=\{(x,y)|(x-10)^2/4^2+(y-2)^2/10^2=1\}$;
\item Region $\Omega_0=\{(x,y)|x^2+y^2=20^2\}$ and three inclusions: $\Omega_1=\{(x,y)|(x+10)^2+(y-4)^2=5^2\}$, $\Omega_2=\{(x,y)|5\leq x\leq 12,\ 0\leq y\leq 12\}$, and $\Omega_3=\{(x,y)|-8\leq x\leq 10, \ -12\leq y\leq -4\}$.
\end{enumerate}
And their $\mu_{a,xf}$ and $\eta$, see figure \ref{fig:original}, take the value as follows:
\begin{enumerate}
\item $\mu_{a,xf}=
  \begin{cases}
    0.02, &(x,y)\in \Omega_1\\
0.03, &(x,y)\in \Omega_5\\
0.04, &(x,y)\in\Omega_4\\
0.01, &(x,y)\in\Omega_0\backslash (\Omega_1\cup\Omega_4\cup\Omega_5)
  \end{cases}
$
 and 
$\eta=
\begin{cases}
0.5,&(x,y)\in\Omega_2\\
0.6,&(x,y)\in\Omega_3\\
0.7,&(x,y)\in\Omega_6\\
  0.1,&(x,y)\in\Omega_0\backslash(\Omega_2\cup\Omega_3\cup\Omega_4)
\end{cases};
$
\item $\mu_{a,xf}=
  \begin{cases}
    0.02,&(x,y)\in\Omega_2\\
0.03,&(x,y)\in\Omega_3\\
0.04,&(x,y)\in\Omega_1\\
0.01,&(x,y)\in\Omega_0\backslash(\Omega_1\cup\Omega_2\cup\Omega_3)
  \end{cases}
$
 and 
$\eta=
\begin{cases}
 0.5,&(x,y)\in\Omega_2\\
0.6,&(x,y)\in\Omega_3\\
0.7,&(x,y)\in\Omega_1\\
0.1,&(x,y)\in\Omega_0\backslash(\Omega_1\cup\Omega_2\cup\Omega_3)
\end{cases}.
$
\end{enumerate}

\begin{figure}[htpb]
    \centering
    \begin{minipage}{0.35\linewidth}
    \includegraphics[width=\textwidth]{./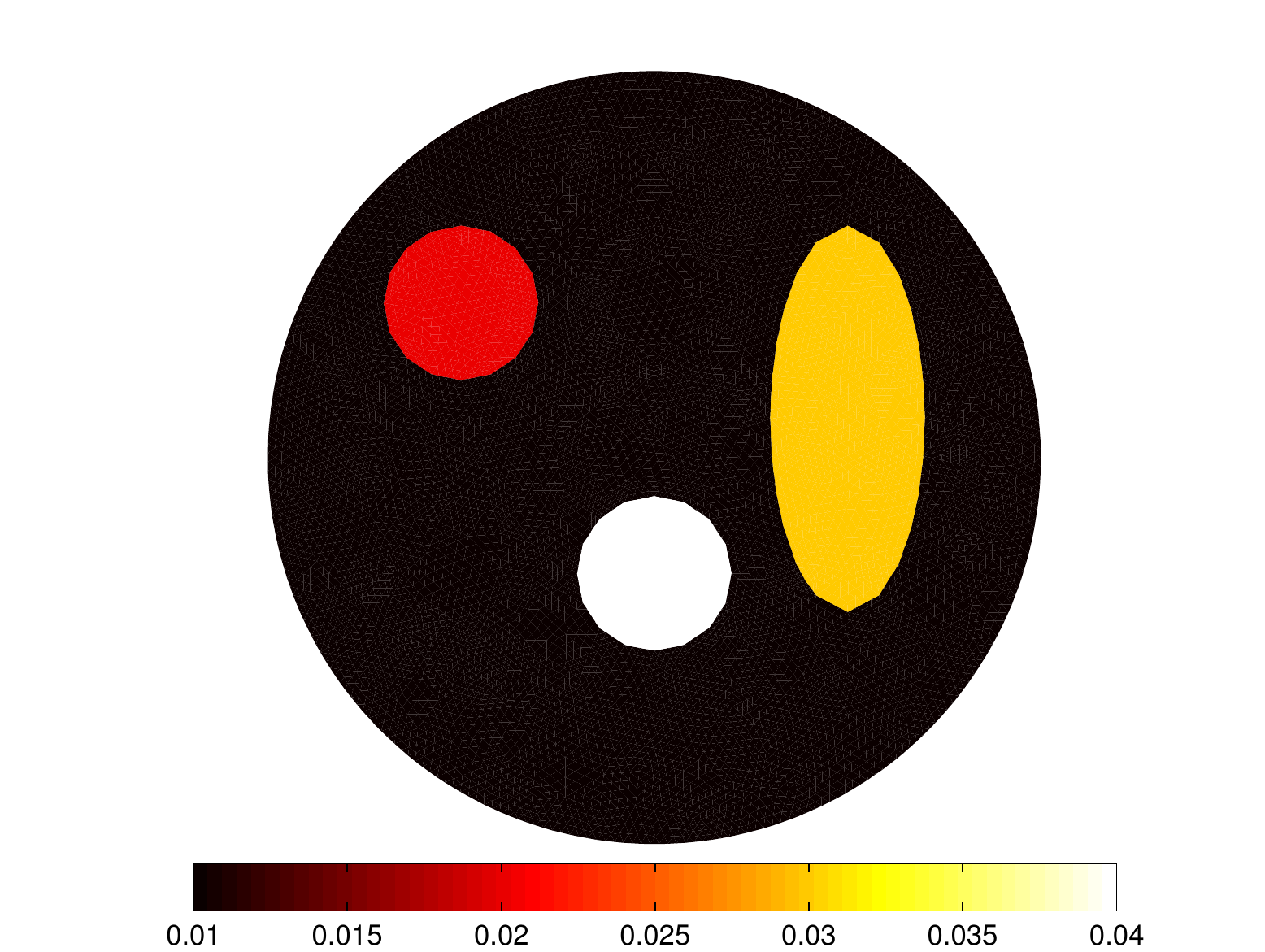}  
    \end{minipage}
   \begin{minipage}{0.35\linewidth}
    \includegraphics[width=\textwidth]{./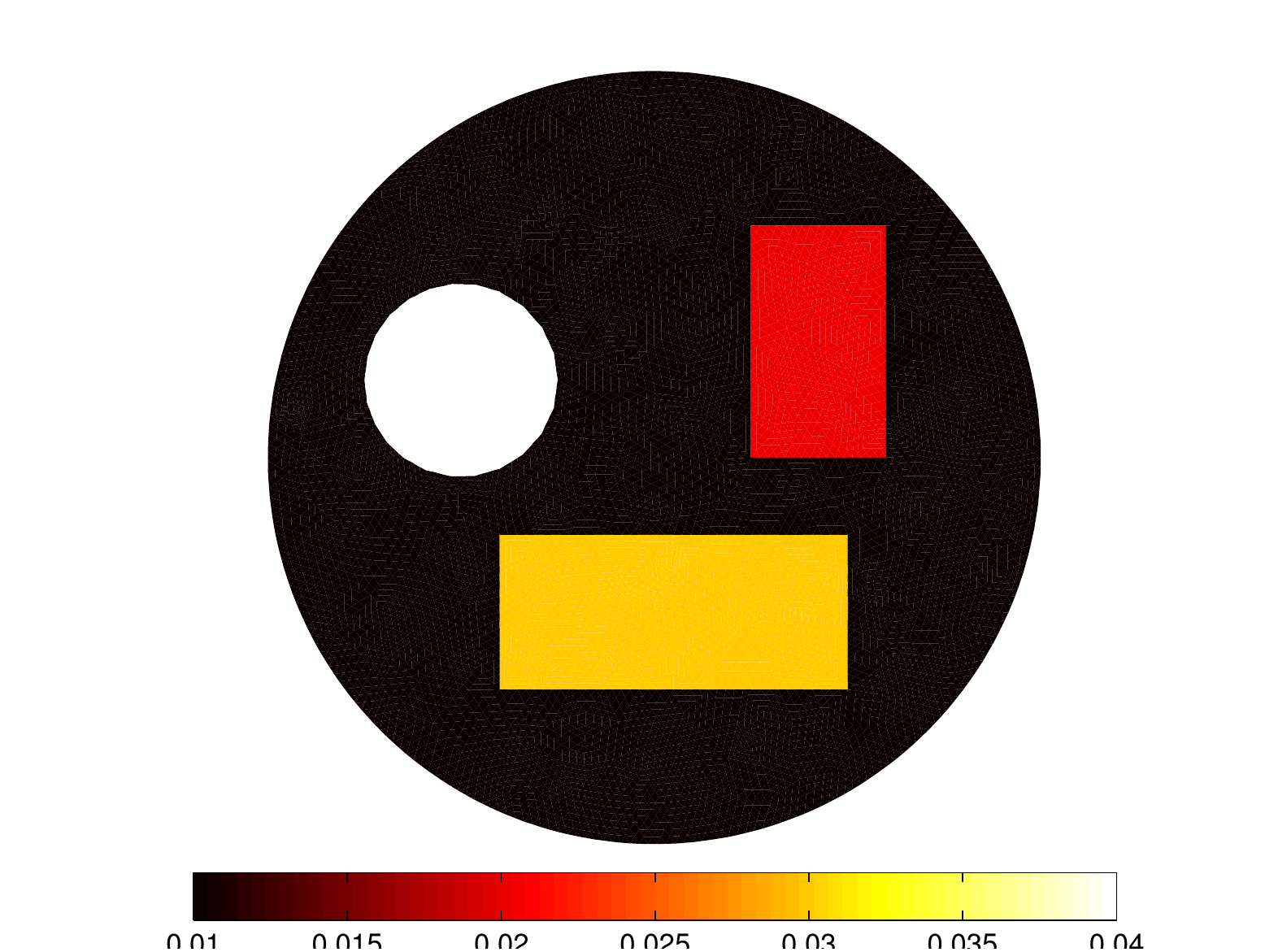}  
    \end{minipage}
\\
    \begin{minipage}{0.35\linewidth}
    \includegraphics[width=\textwidth]{./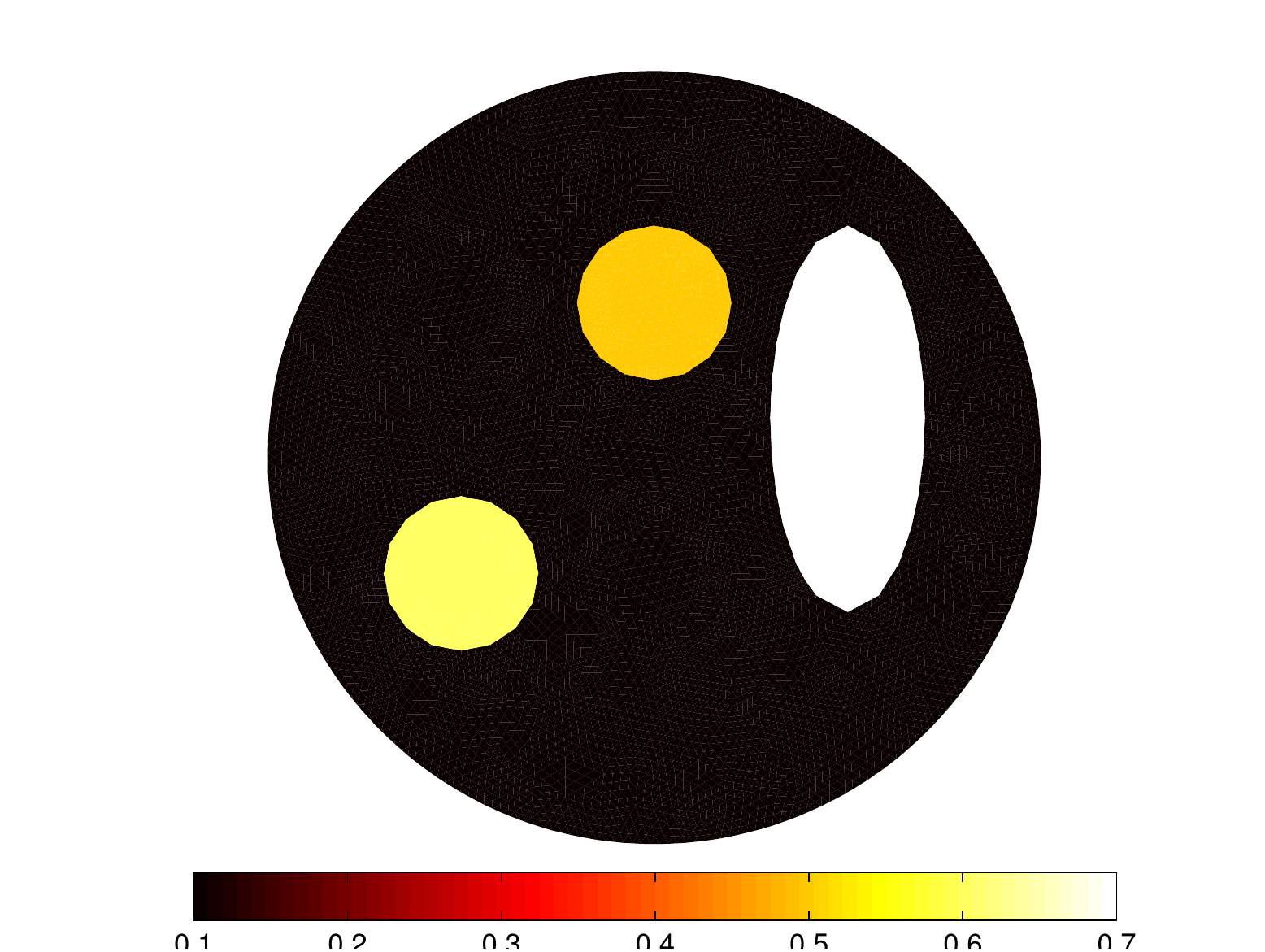}  
    \end{minipage}
    \begin{minipage}{0.35\linewidth}
    \includegraphics[width=\textwidth]{./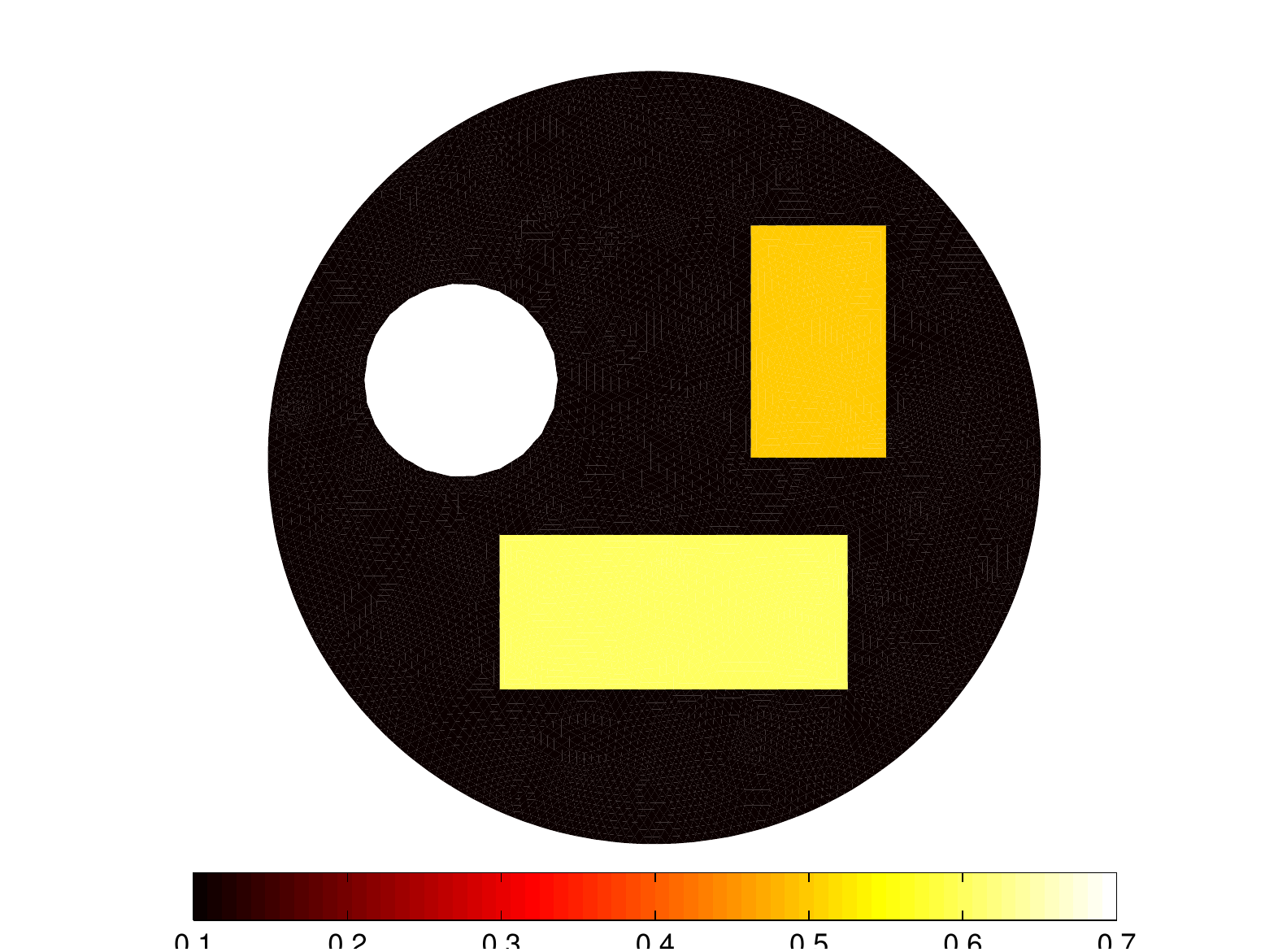}  
    \end{minipage}
 \caption{Original fluorescence yield. Top row: fluorescence absorption coefficient $\mu_{a,xf}$. Bottom row: quantum efficiency\label{fig:original}}
\end{figure}
From figure \ref{fig:original}, investigated templates contain inclusions with smooth and sharp edges and their fluorescent absorption coefficients are piecewise constant. We apply discontinuous Galerkin (DG) method combined with multigrid method to solve RTE system \eqref{eq:1}, and the details about algorithm and its convergence refer to~\cite{Gao,Gao2013}. As for adjoint RTE \eqref{eq:34} and \eqref{eq:35}, similar algorithm and corresponding convergence are presented in~\cite{Wang2017}. Compared with other finite element methods, such as streamline diffusion modification, DG not only admits jumps or smooth borders, but also it reduces the problem to a sparse $3\times 3$ block diagonal system, which means we can attain the solution by solving $3\times 3$ linear system one by one. Using two templates illustrated in figure \ref{fig:original}, we solve forward problem on unstructured mesh with 16640 and 17376 triangles respectively. There are four available measurements in the position of $(20,0)$, $(0,20)$, $(-20,0)$ and $(0,-20)$. The discrete data is still denoted by $h$. To test the stability of algorithms with respect to noise, we add Gaussian noise to the data of the form
\begin{equation*}
  \widetilde{h}=h(1+\epsilon \mathcal{N}),
\end{equation*}
where $\mathcal{N}$ is a standard Gaussian random matrix with the same size as $h^*$ and $\epsilon$ represents the level of noise. We use $\epsilon_f$ to measure the relative distance between estimating $\mu_{a,xf}$ and $\mu_{a,xf}^*$, which is defined by
\begin{equation*}
  \epsilon_f:=\frac{\norm{\mu_{a,xf}-\mu_{a,xf}^*}_2}{\norm{\mu_{a,xf}^*}_2}
\end{equation*}

\subsection{The effect of different mesh on SIM algorithm}
\label{sec:4.1}
To test the effect of different meshes on algorithm SIM, we apply the algorithm \ref{alg:2} to the first template. There are five unstructured triangular mesh $T_1$, $T_2$ and $T_3$ containing 7392, 8074, 11872 triangles respectively. Forward problem is solved in triangulation $T_0$ with 16640 triangles. And one, two, three and four measurements are applied respectively to test the effect of multi-measurement. The specific relative error $\epsilon_f$ are shown in figure \ref{fig:invcri}. We find that $\epsilon_f$ decreases at first steps, then increases quickly after arriving minimum. So some stabilization scheme need to be incorporated into our SIM method. 

\begin{figure}[htpb]
    \centering
\subfloat[one measurement\label{fig:invcri1}]{\includegraphics[width=0.45\textwidth]{./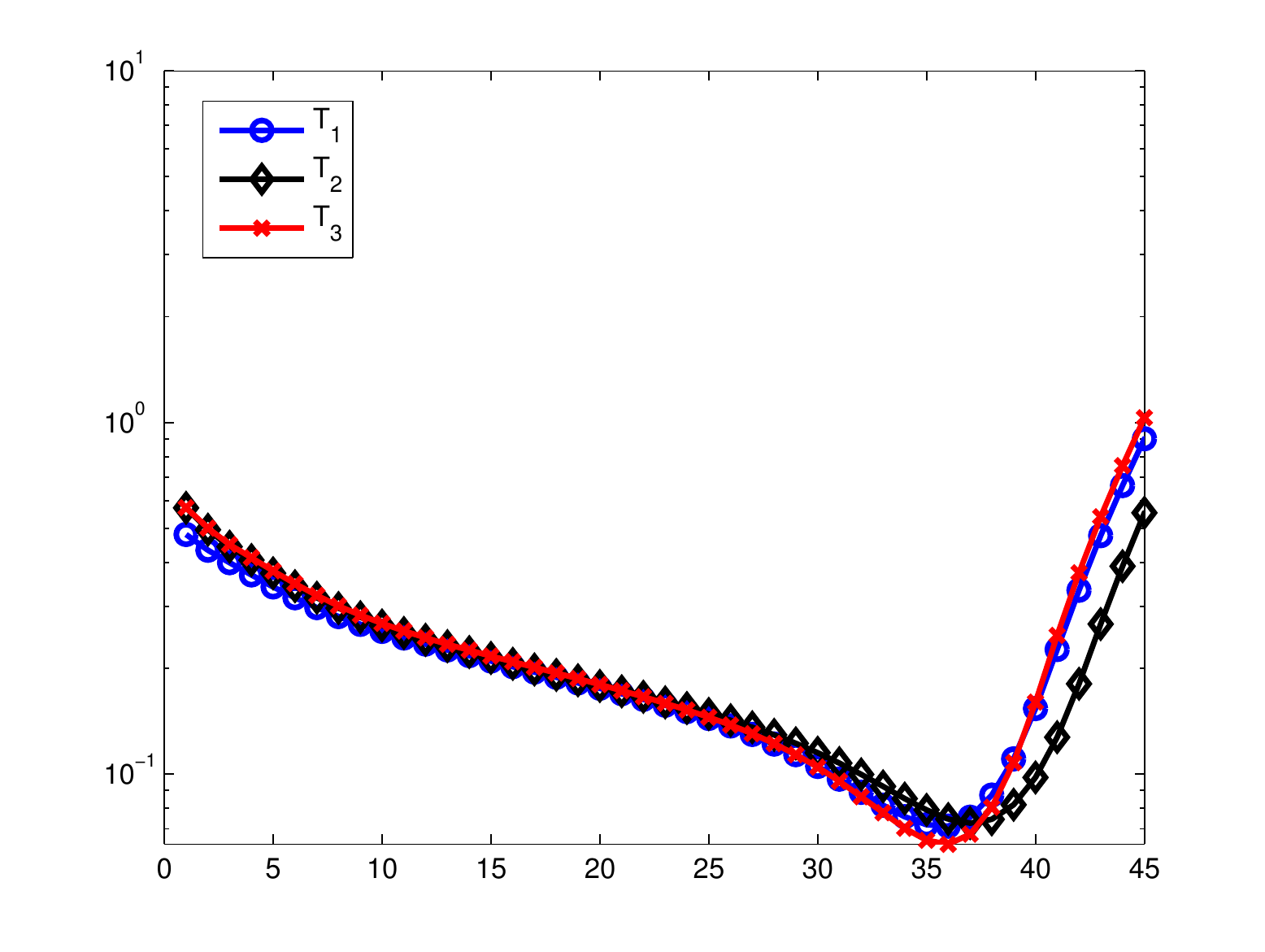}}
\subfloat[two measurements\label{fig:invcri2}]
 {\includegraphics[width=0.45\textwidth]{./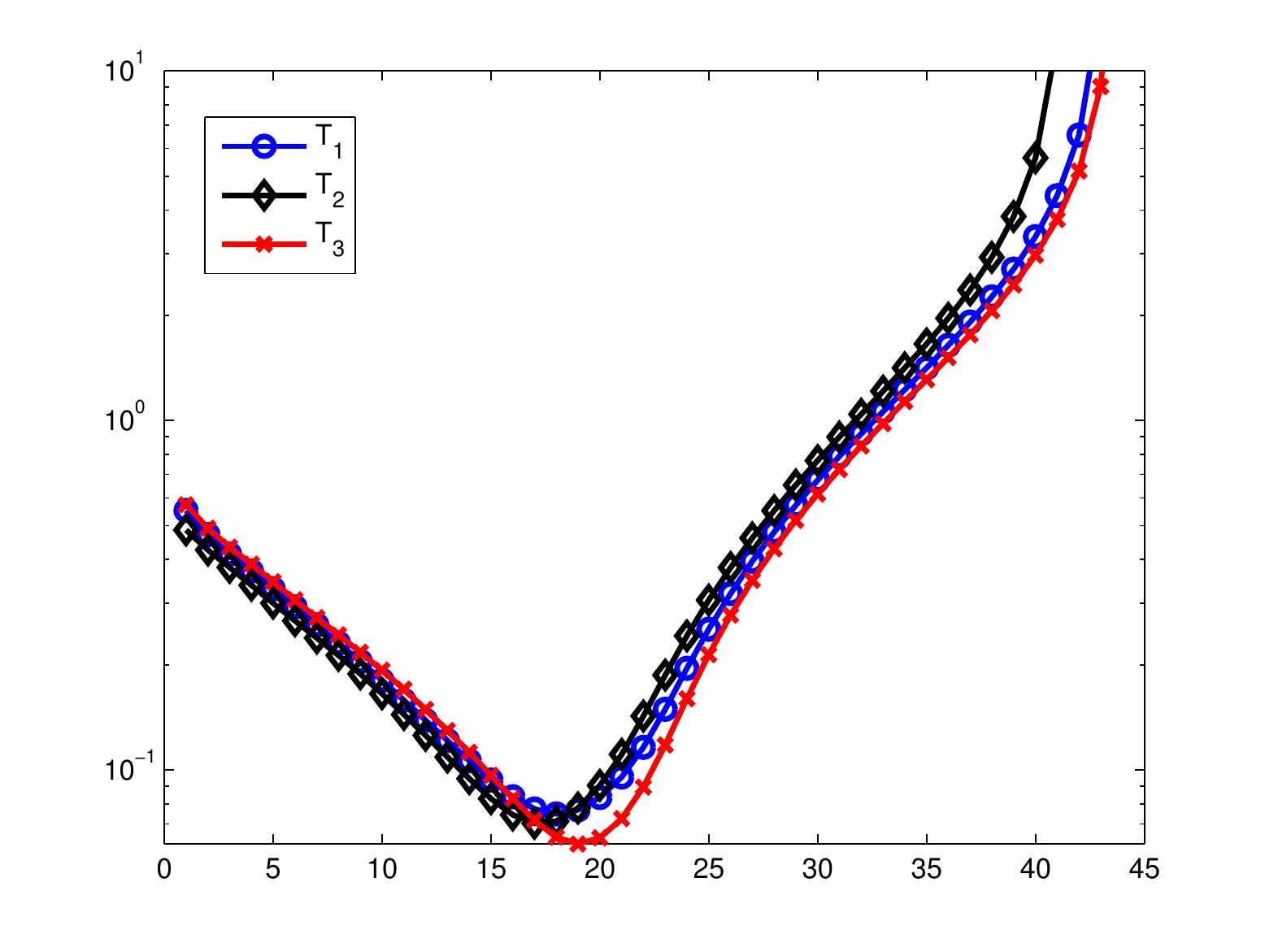}}\\
\subfloat[three measurements\label{fig:invcri3}]
 {\includegraphics[width=0.45\textwidth]{./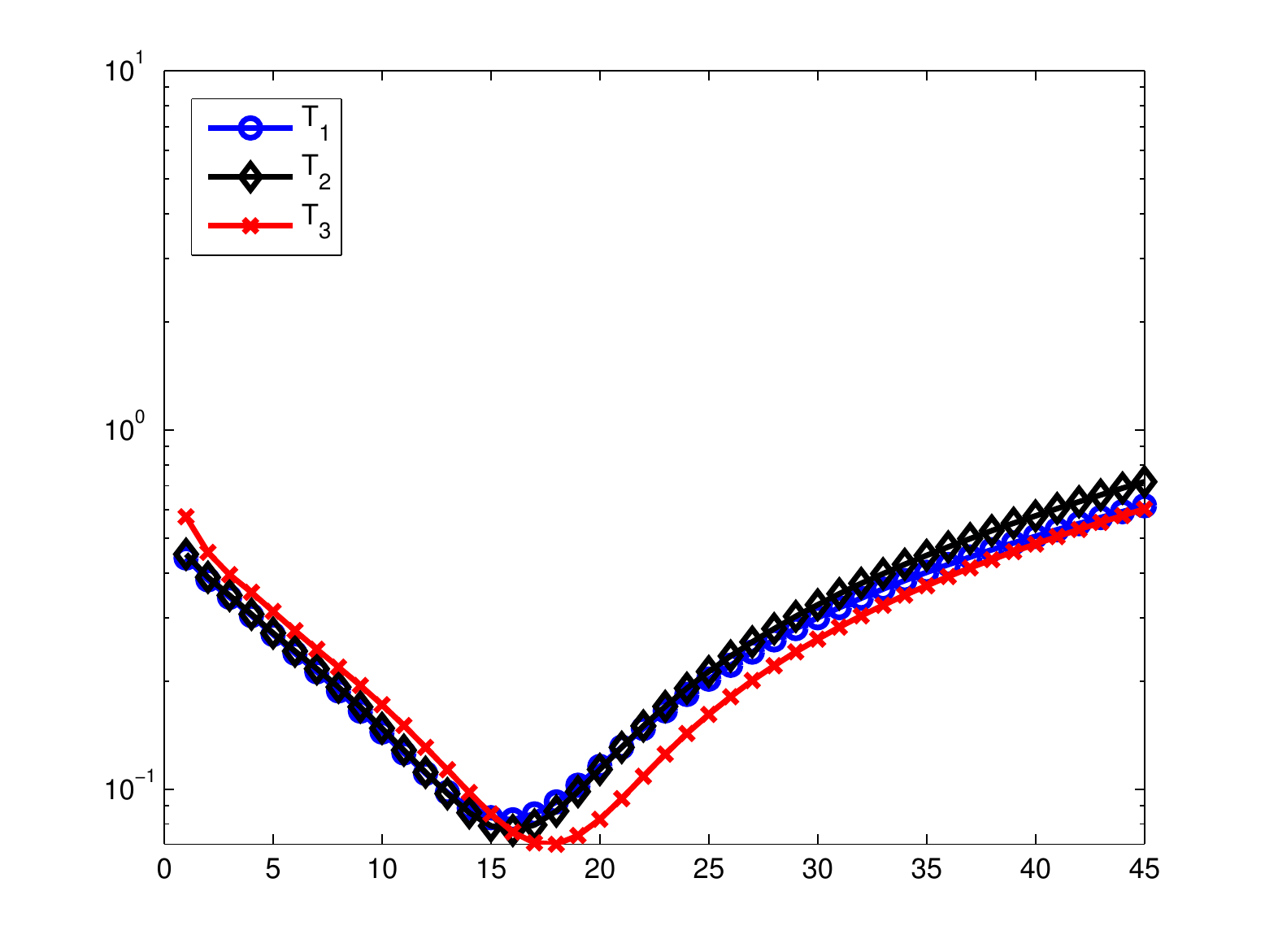}}
\subfloat[four measurements\label{fig:invcri4}]
 {\includegraphics[width=0.45\textwidth]{./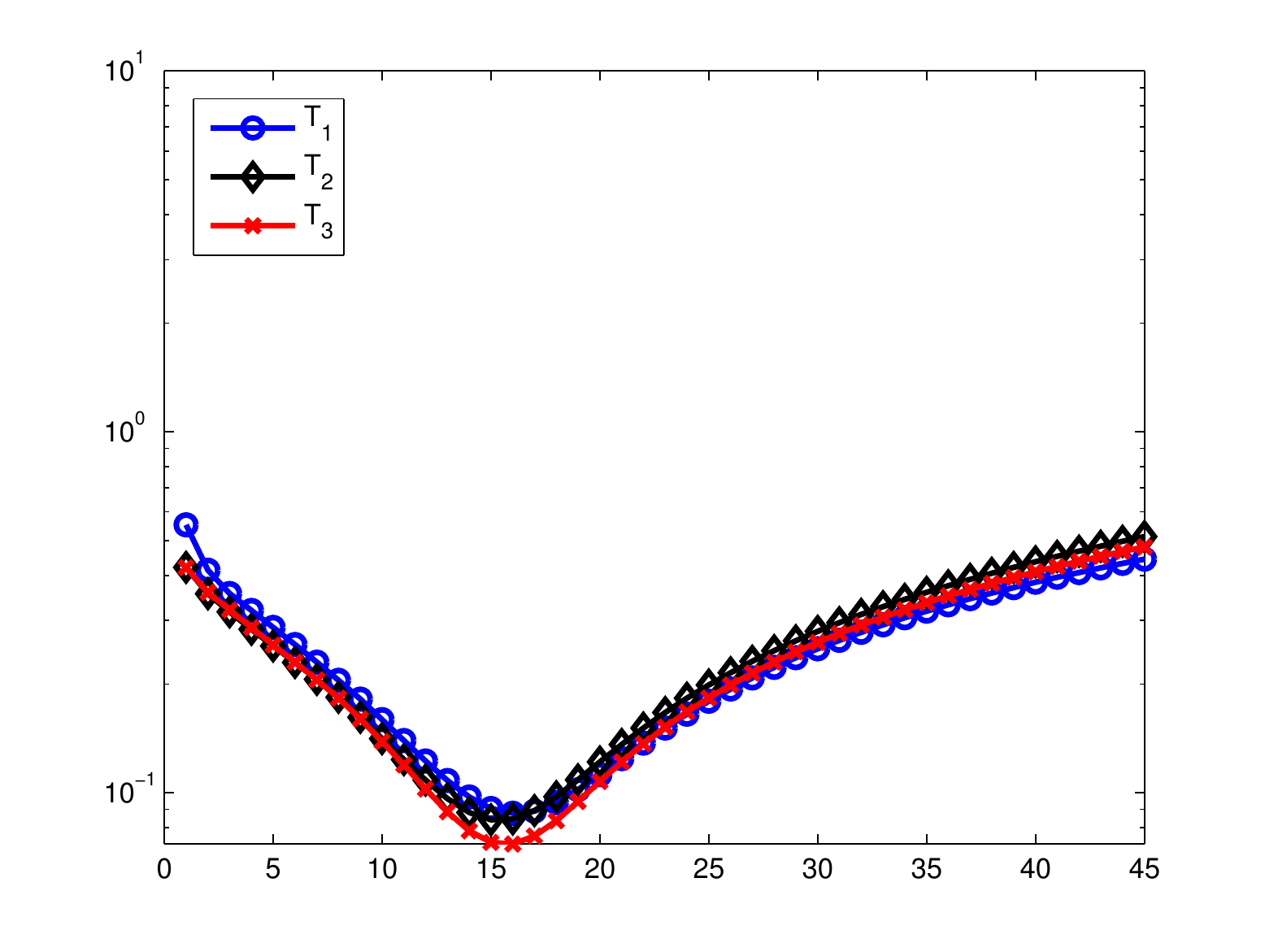}}\\
 \caption{The specific iterative relative error $\epsilon_f$ for three triangulation $T_1$, $T_2$, and $T_3$ \label{fig:invcri}}
\end{figure}

\subsection{Comparison of the hybrid method and the nonlinear optimization method}
\label{sec:4.2}
From section \ref{sec:4.1}, SIM remarkably linearly convergence before arriving minimum from \ref{fig:invcri2}, \ref{fig:invcri3} and \ref{fig:invcri4} of figure \ref{fig:invcri}. Using this feature of SIM, hybrid method is expected to improve the stability of SIM. Considering one, two, three, and four measurements, we apply hybrid and nonlinear optimization method respectively on noise-free, 2$\%$ noise and 5$\%$ noise data. For two templates illustrated in figure \ref{fig:original}, their reconstruction results are showed in figure \ref{fig:noise-free}, \ref{fig:noise2} and \ref{fig:noise5}, the specific relative error $\epsilon_f$ are showed in figure \ref{fig:template 1} and \ref{fig:template 2}, and their relative error are listed in the table \ref{tab:1} after 50 steps. 

From figure\ref{fig:noise-free}, \ref{fig:noise2} and \ref{fig:noise5}, hybrid method performs better in one-measurement case. Even for 5$\%$ noise data, optimization method in one-measurement can only obtain a figure almost without any edges, see the first figure on the fourth row of figure \ref{fig:noise5}. Similarly, from the figure \ref{fig:template 1} and \ref{fig:template 2}, in one-measurement case, hybrid method gets smaller relative error. Even for noise-free data, in one-measurement case, optimization can not control the relative error to less than 10$\%$. In more measurements cases such as three-measurement or four-measurement, the two methods almost can get the same accuracy, but the hybrid method converges more rapidly in most cases. From table \ref{tab:1}, we can see that no matter which method is used, the more measurements, the smaller the reconstruction error. When the number of measurement is small, hybrid method is more advantageous.

\begin{figure}[htpb]
    \centering
\begin{minipage}{0.24\linewidth}
\includegraphics[width=\textwidth]{./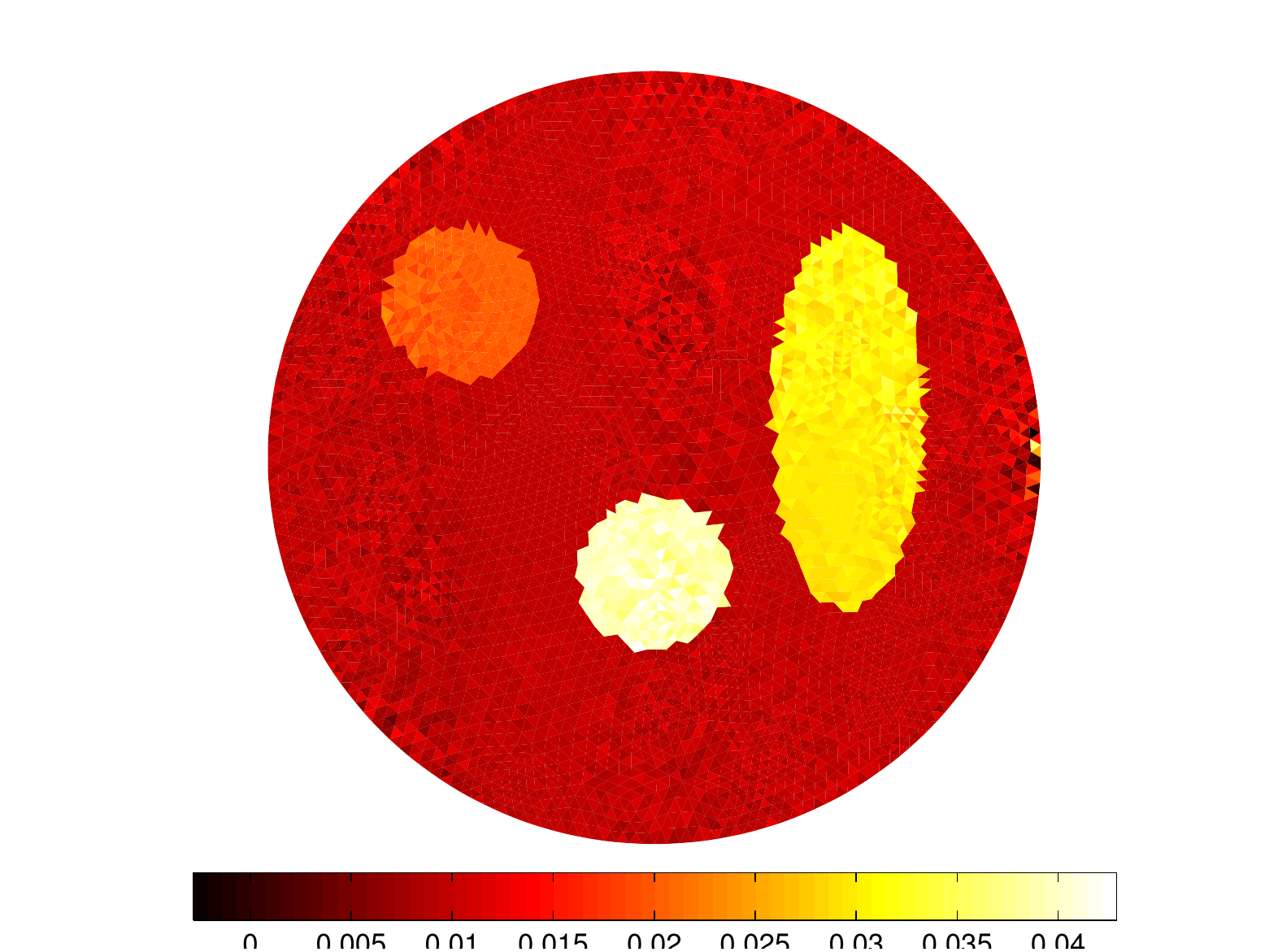}
\end{minipage}
\begin{minipage}{0.24\linewidth}
  \includegraphics[width=\textwidth]{./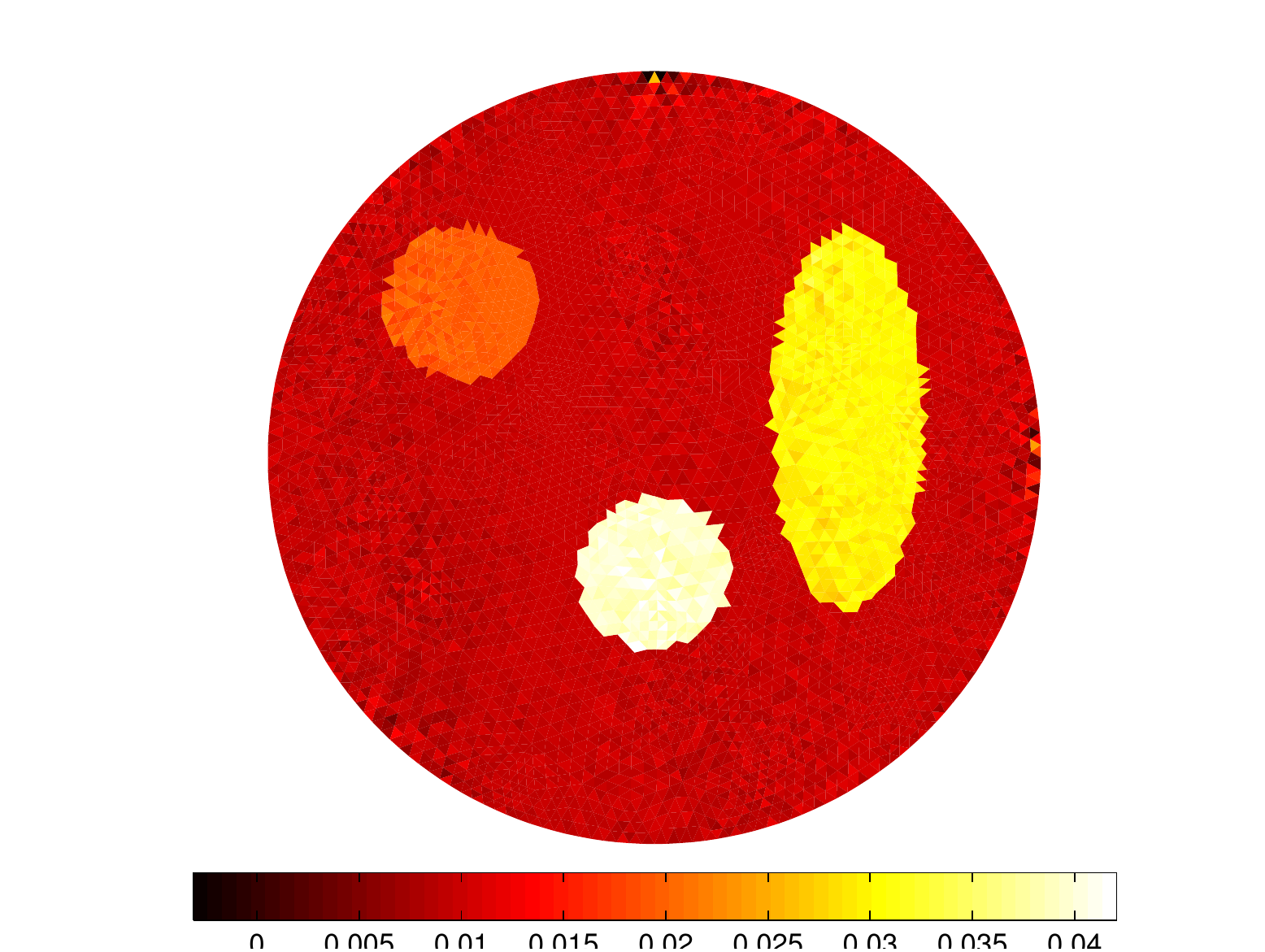}
\end{minipage}
\begin{minipage}{0.24\linewidth}
  \includegraphics[width=\textwidth]{./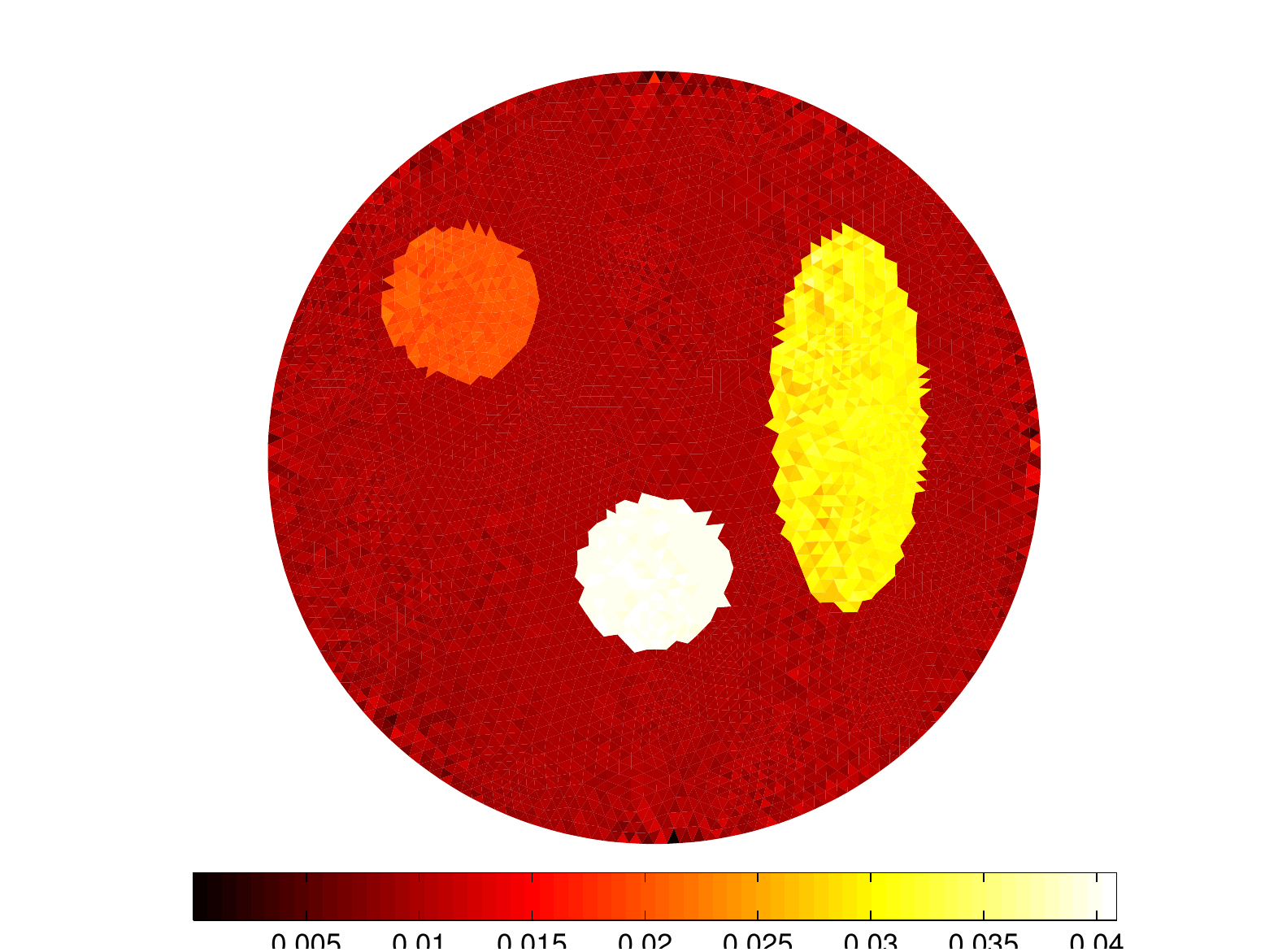}
\end{minipage}
\begin{minipage}{0.24\linewidth}
  \includegraphics[width=\textwidth]{./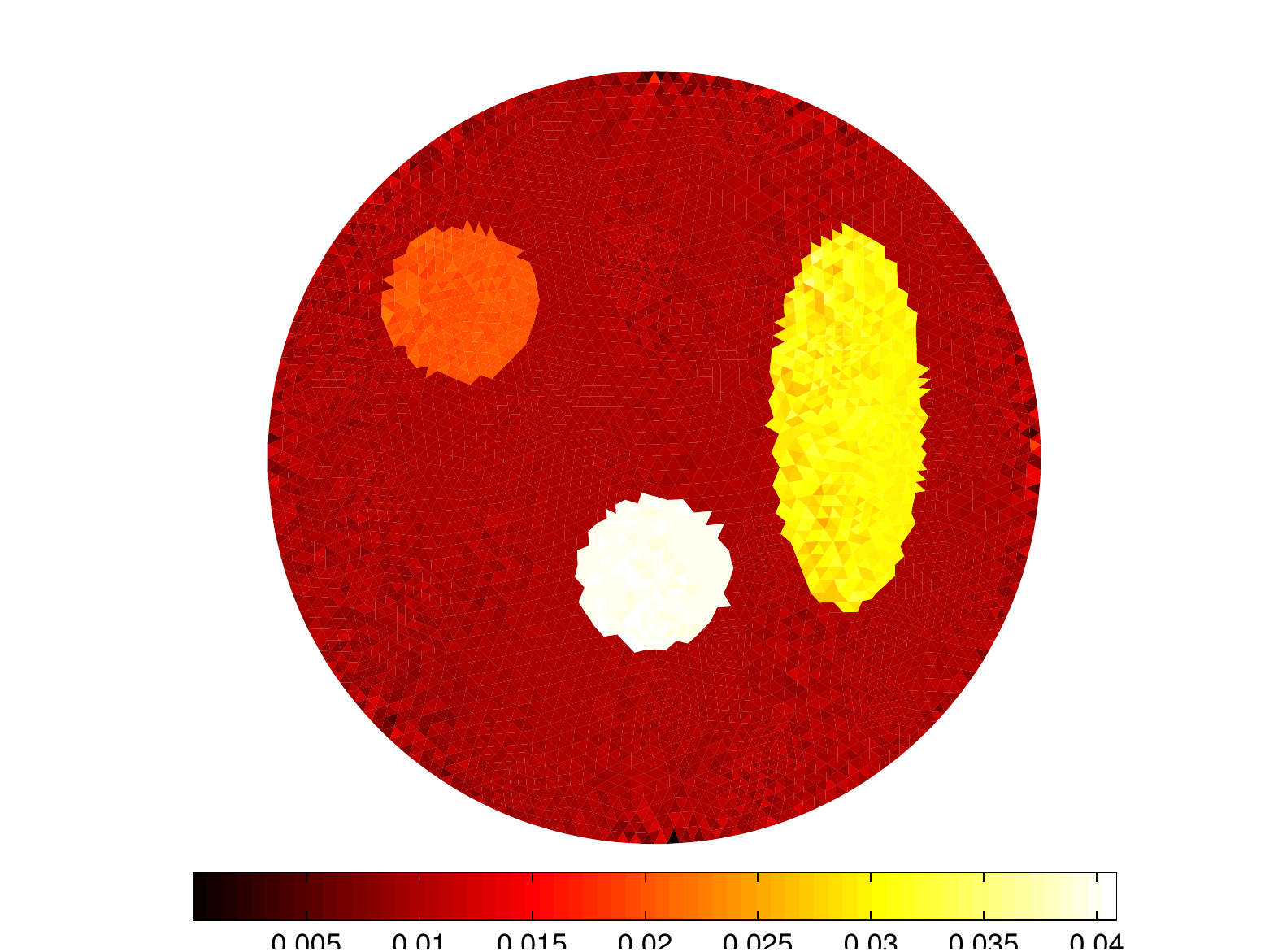}
\end{minipage}
\\
\begin{minipage}{0.24\linewidth}
\includegraphics[width=\textwidth]{./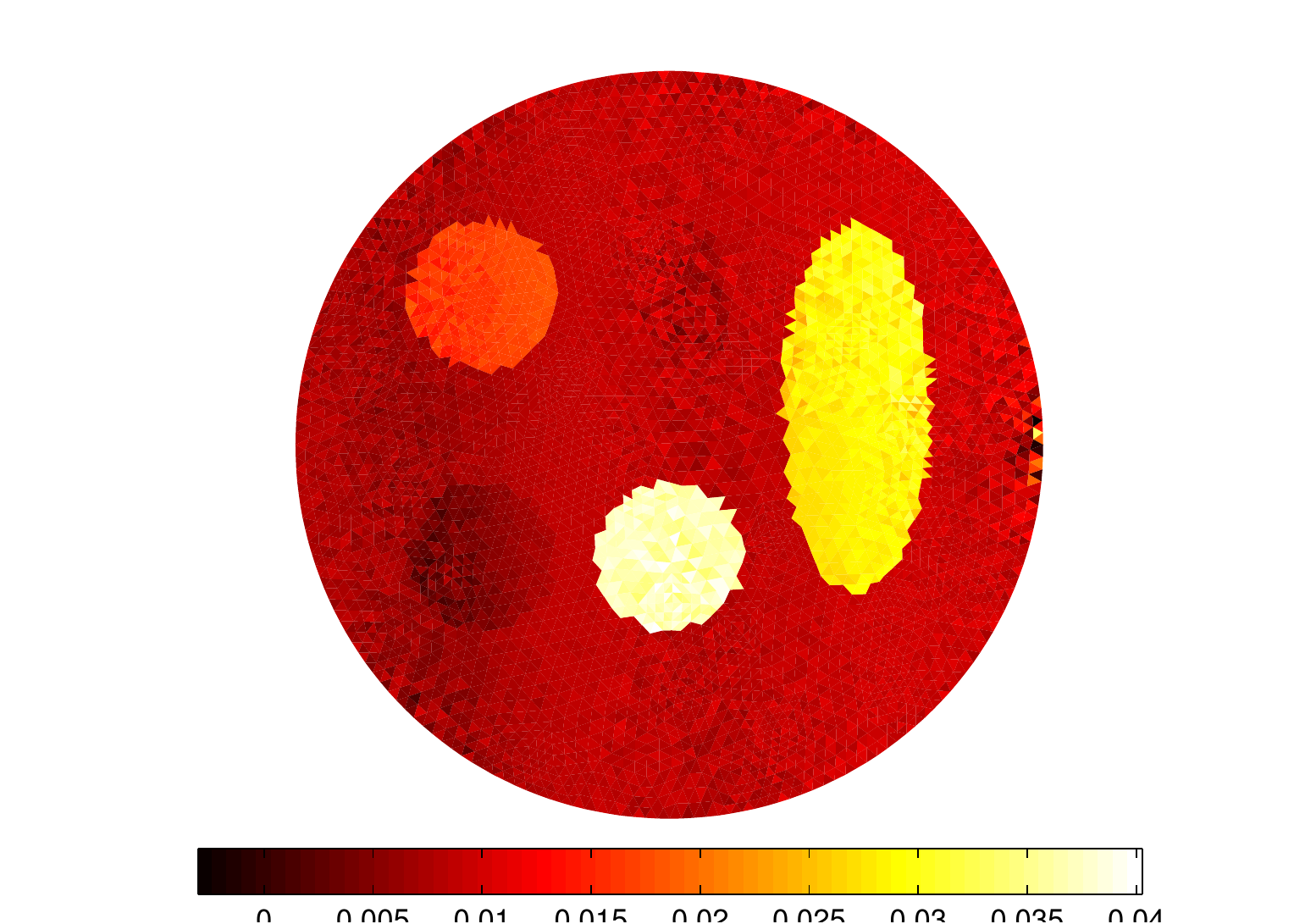}
\end{minipage}
\begin{minipage}{0.24\linewidth}
  \includegraphics[width=\textwidth]{./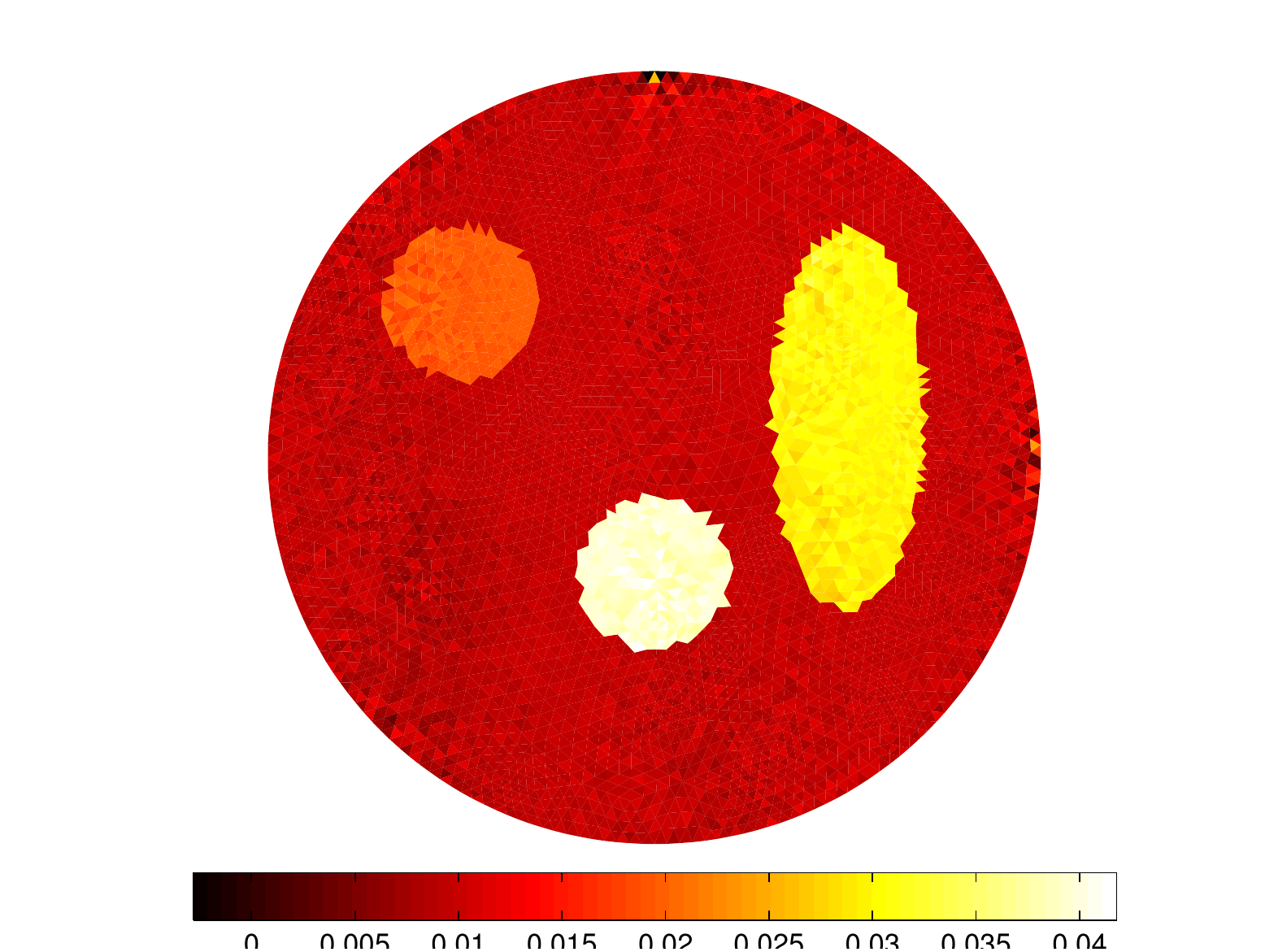}
\end{minipage}
\begin{minipage}{0.24\linewidth}
  \includegraphics[width=\textwidth]{./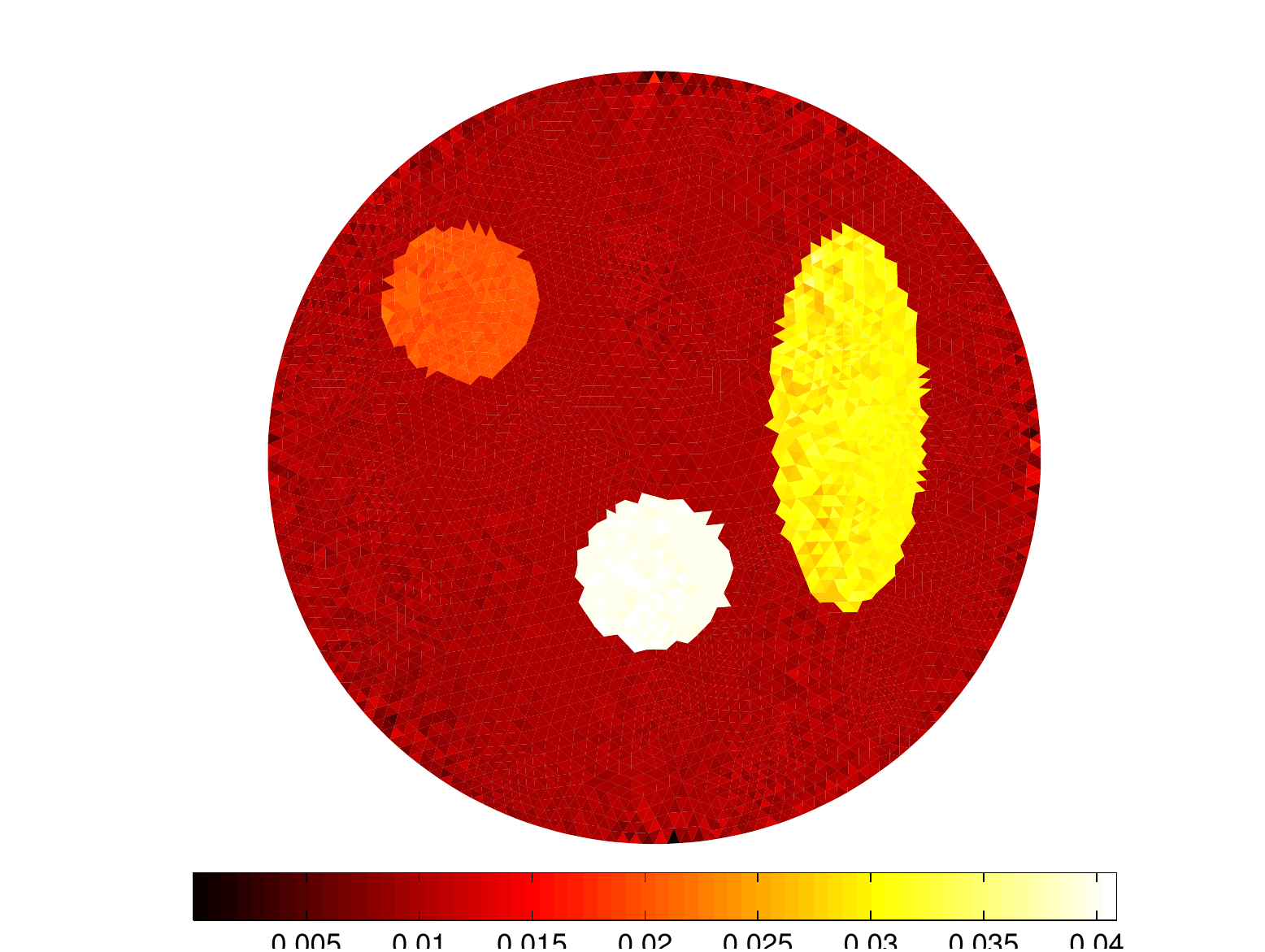}
\end{minipage}
\begin{minipage}{0.24\linewidth}
  \includegraphics[width=\textwidth]{./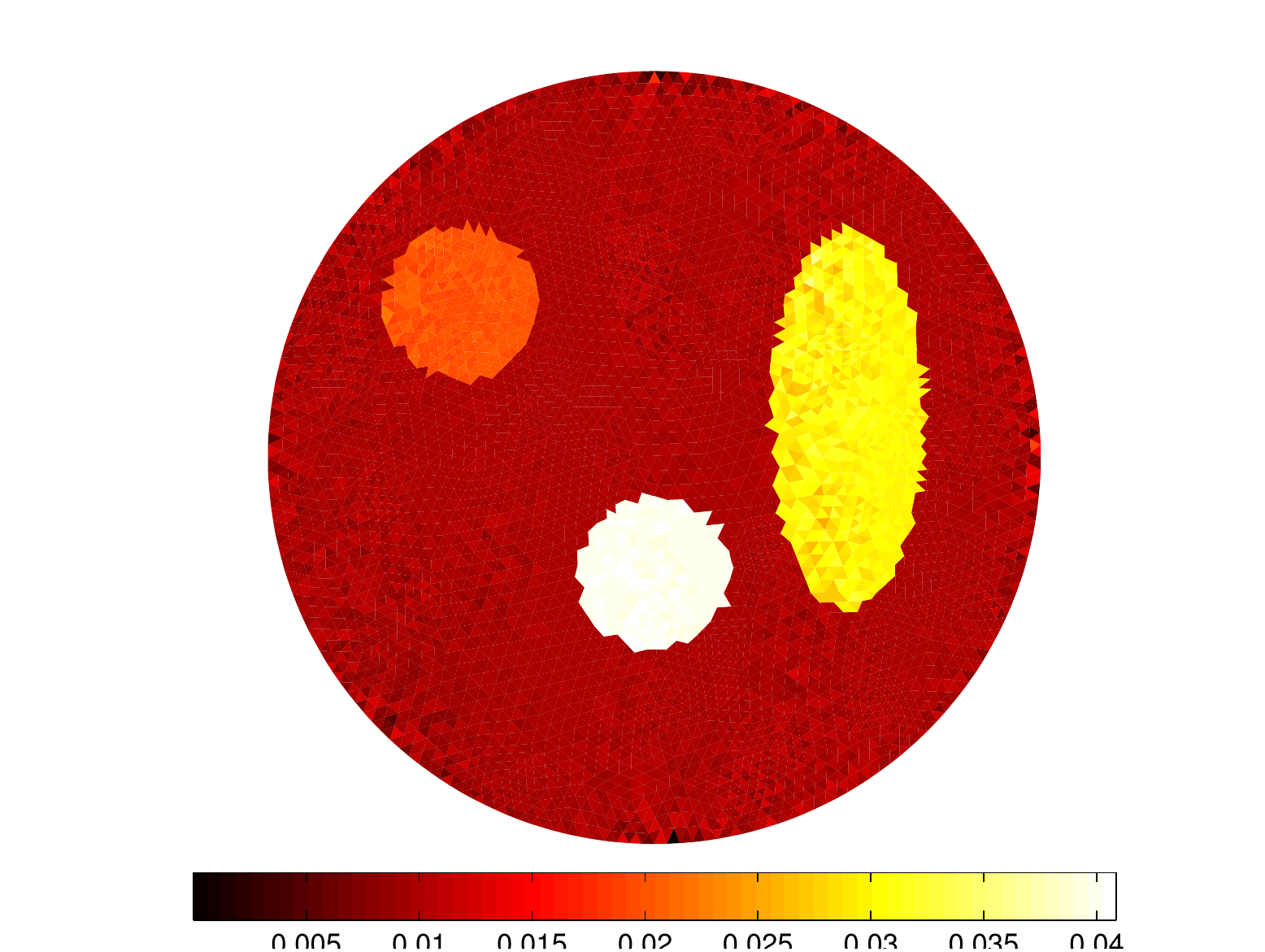}
\end{minipage}
\\
\begin{minipage}{0.24\linewidth}
\includegraphics[width=\textwidth]{./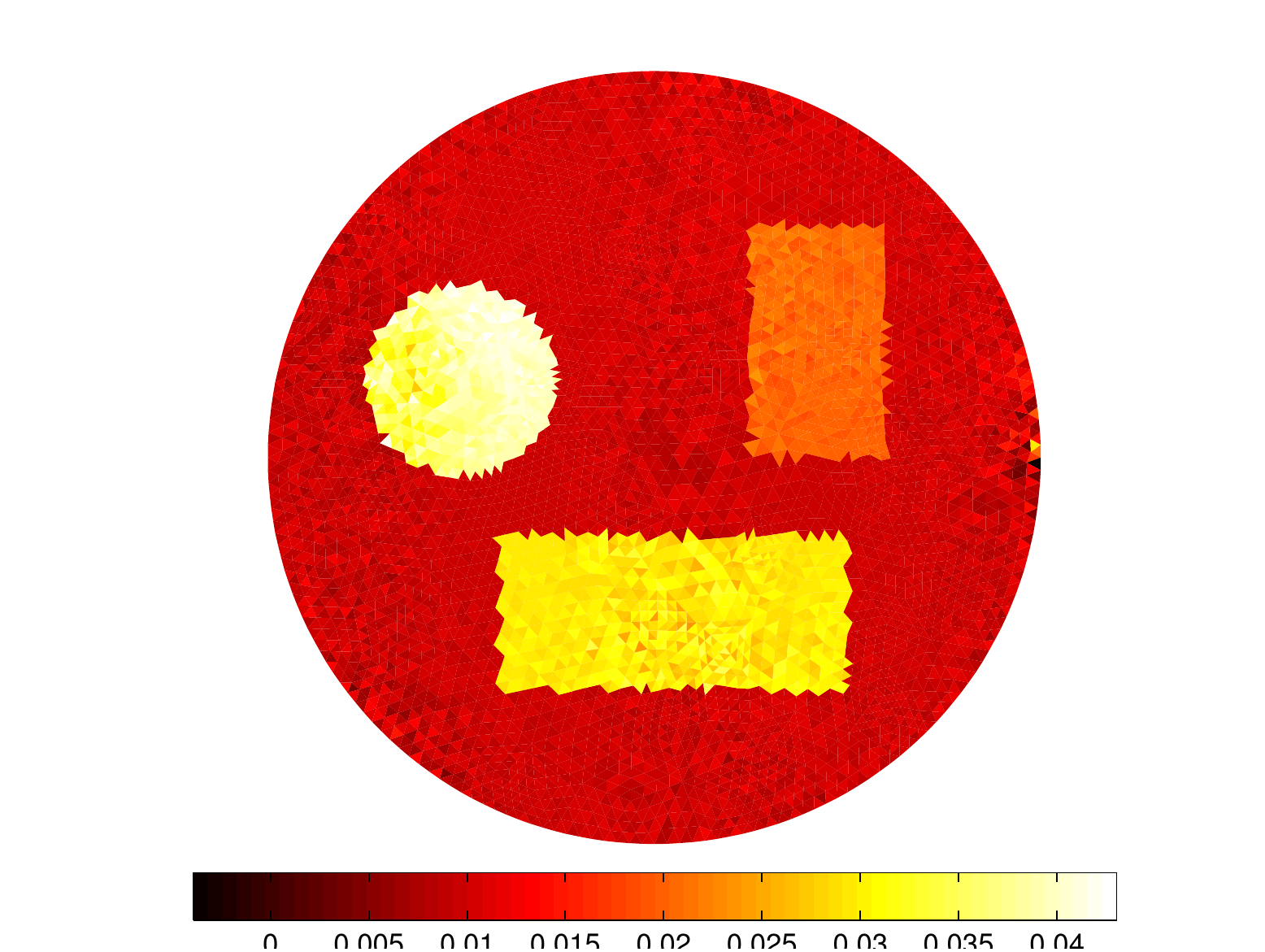}
\end{minipage}
\begin{minipage}{0.24\linewidth}
  \includegraphics[width=\textwidth]{./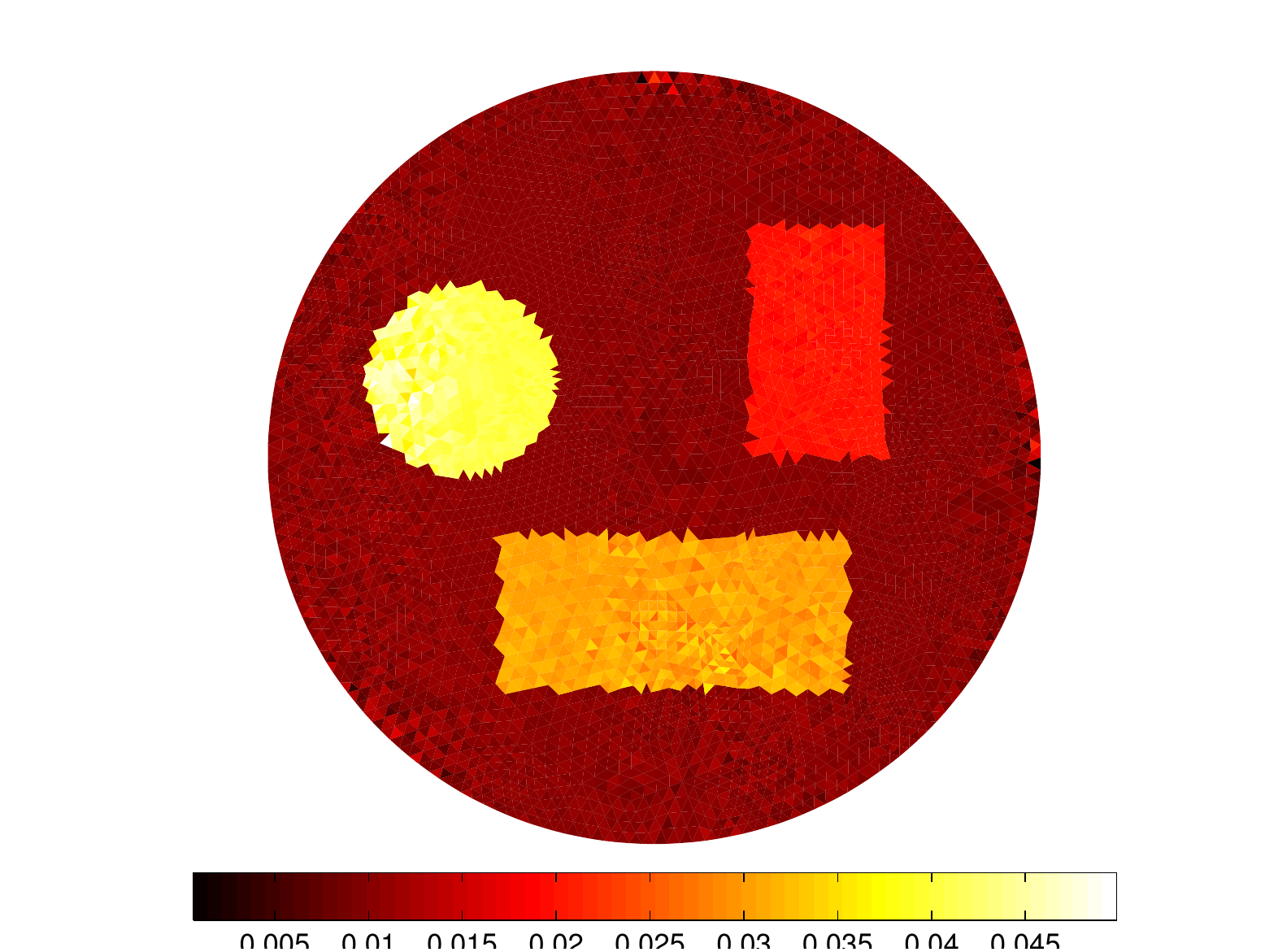}
\end{minipage}
\begin{minipage}{0.24\linewidth}
  \includegraphics[width=\textwidth]{./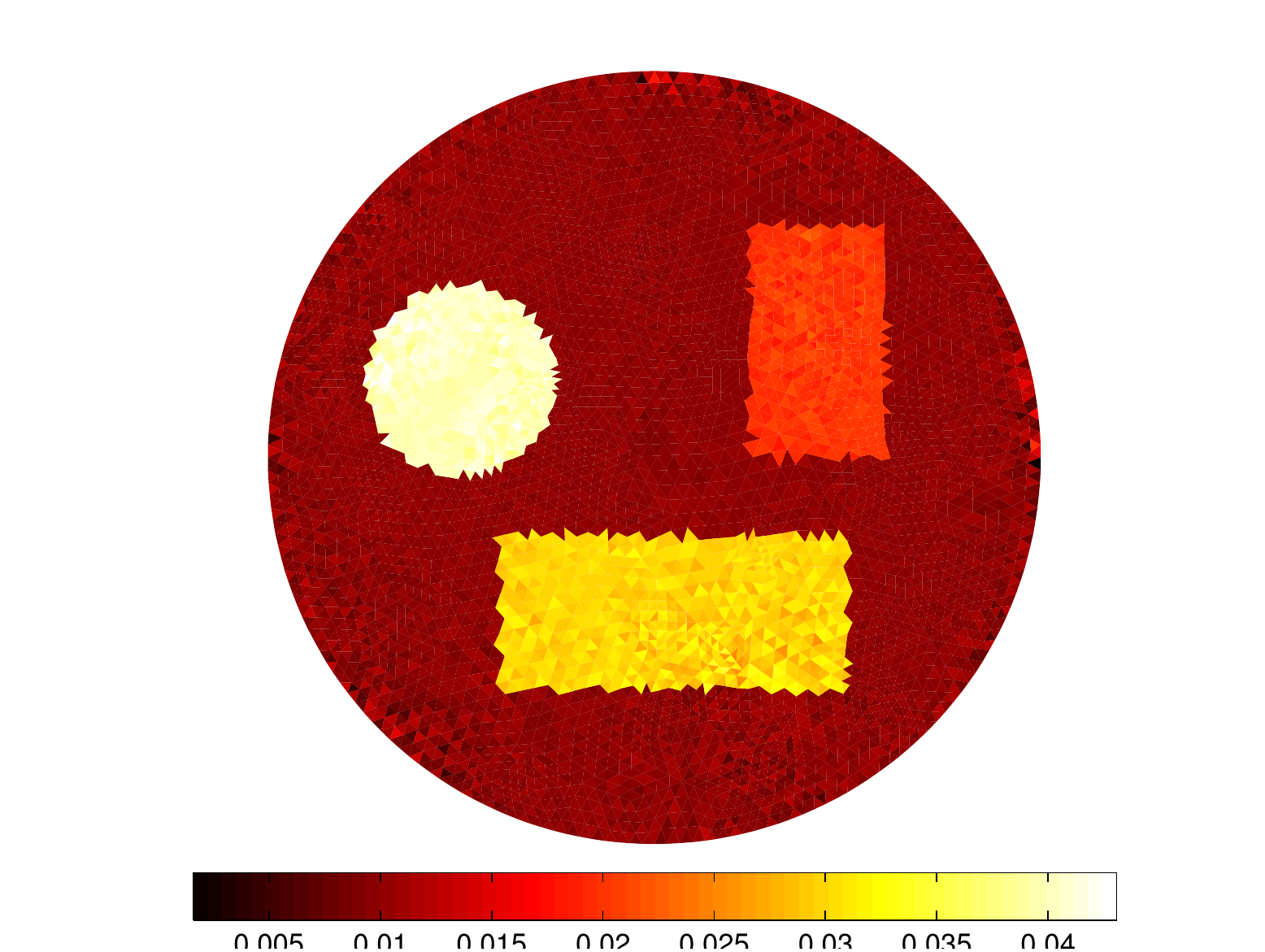}
\end{minipage}
\begin{minipage}{0.24\linewidth}
  \includegraphics[width=\textwidth]{./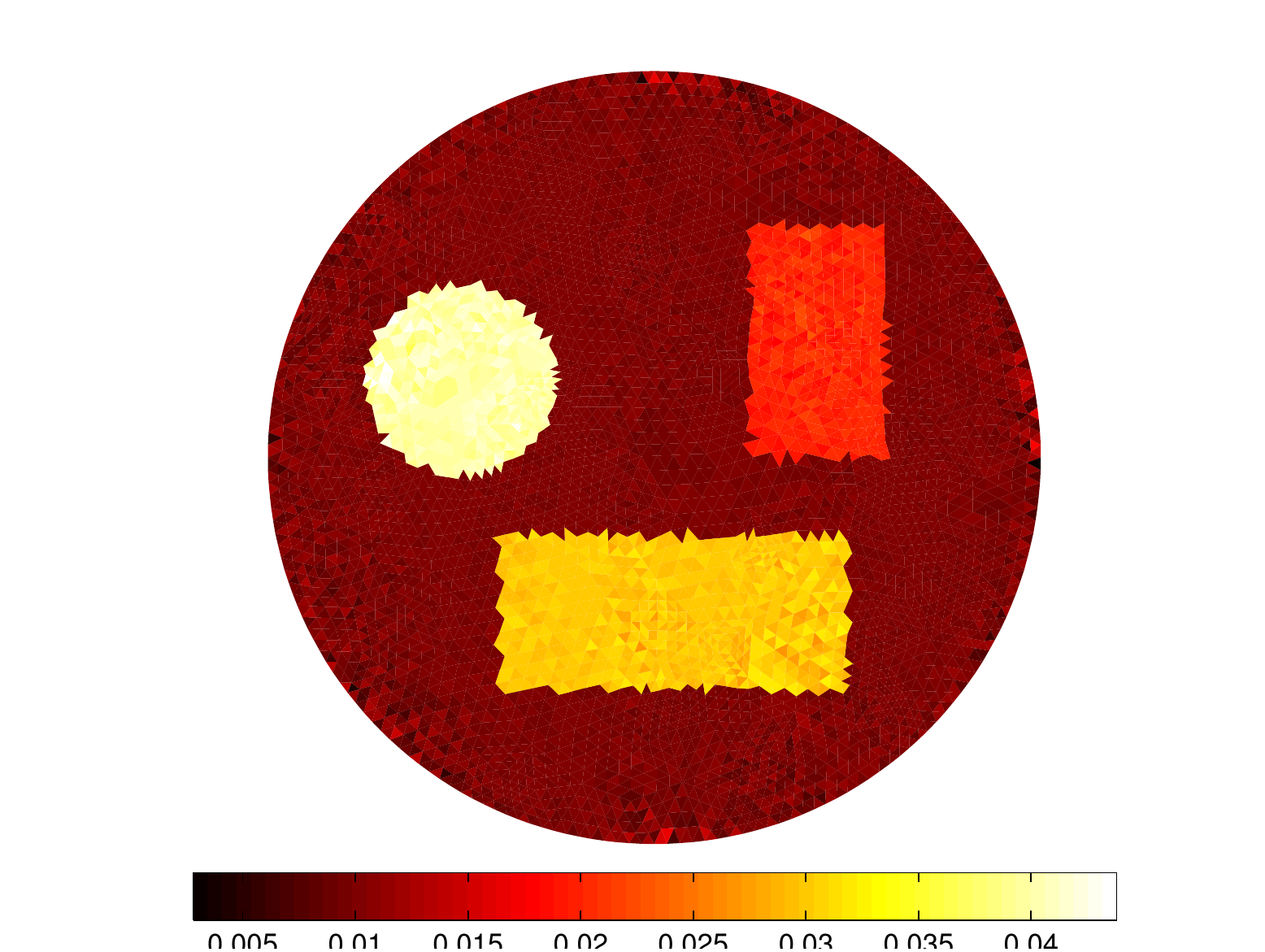}
\end{minipage}
\\
\begin{minipage}{0.24\linewidth}
\includegraphics[width=\textwidth]{./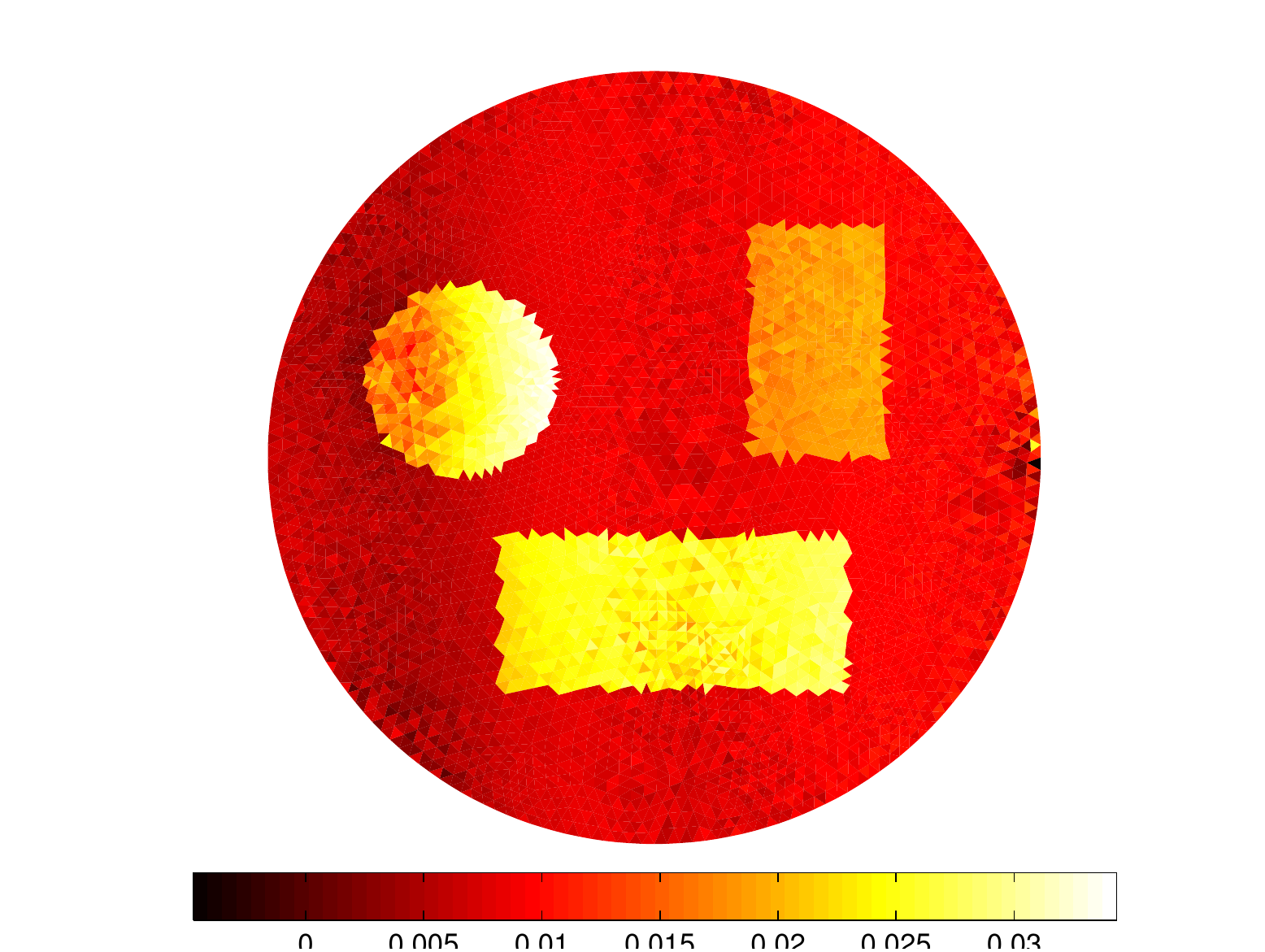}
\end{minipage}
\begin{minipage}{0.24\linewidth}
  \includegraphics[width=\textwidth]{./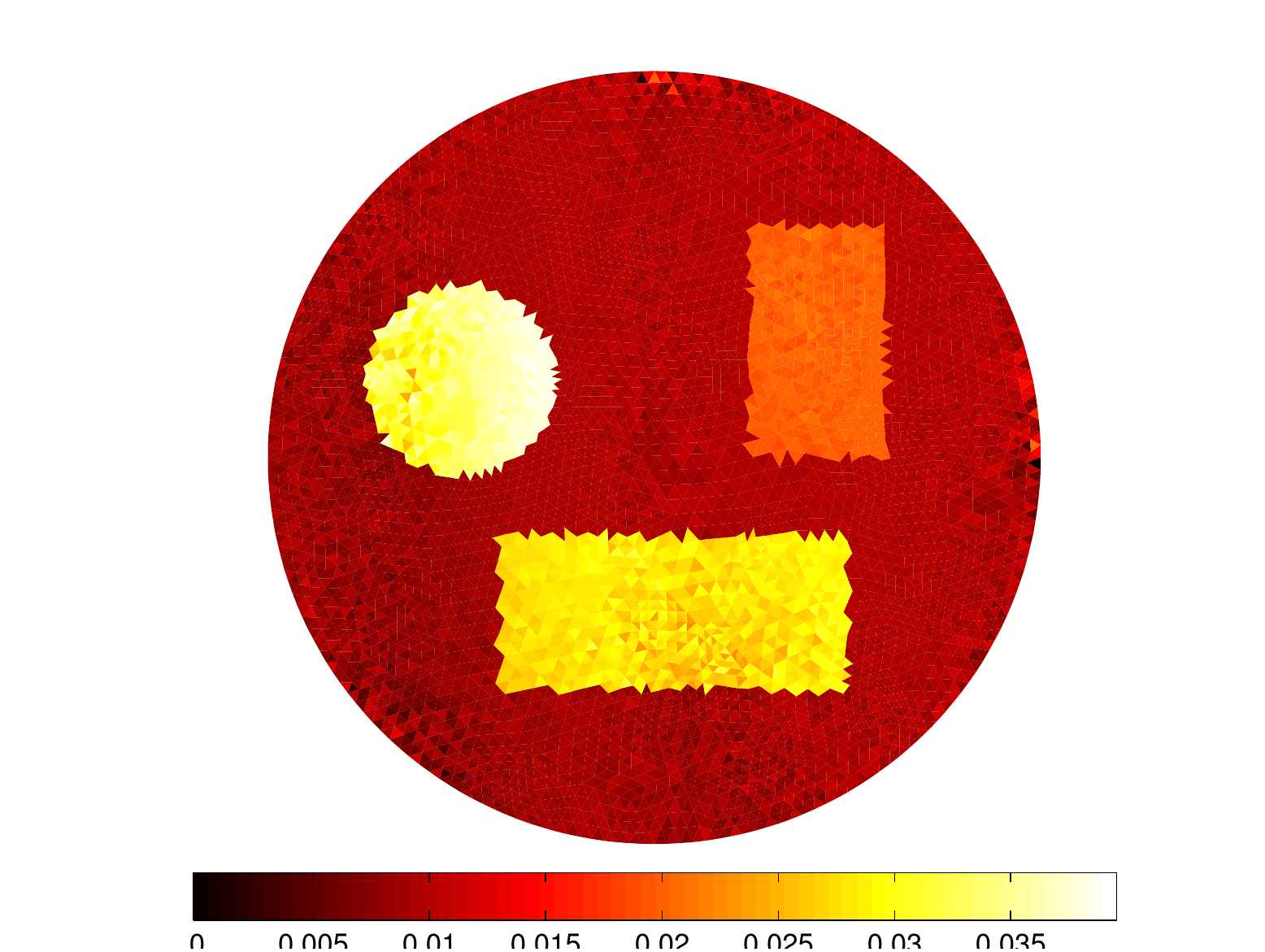}
\end{minipage}
\begin{minipage}{0.24\linewidth}
  \includegraphics[width=\textwidth]{./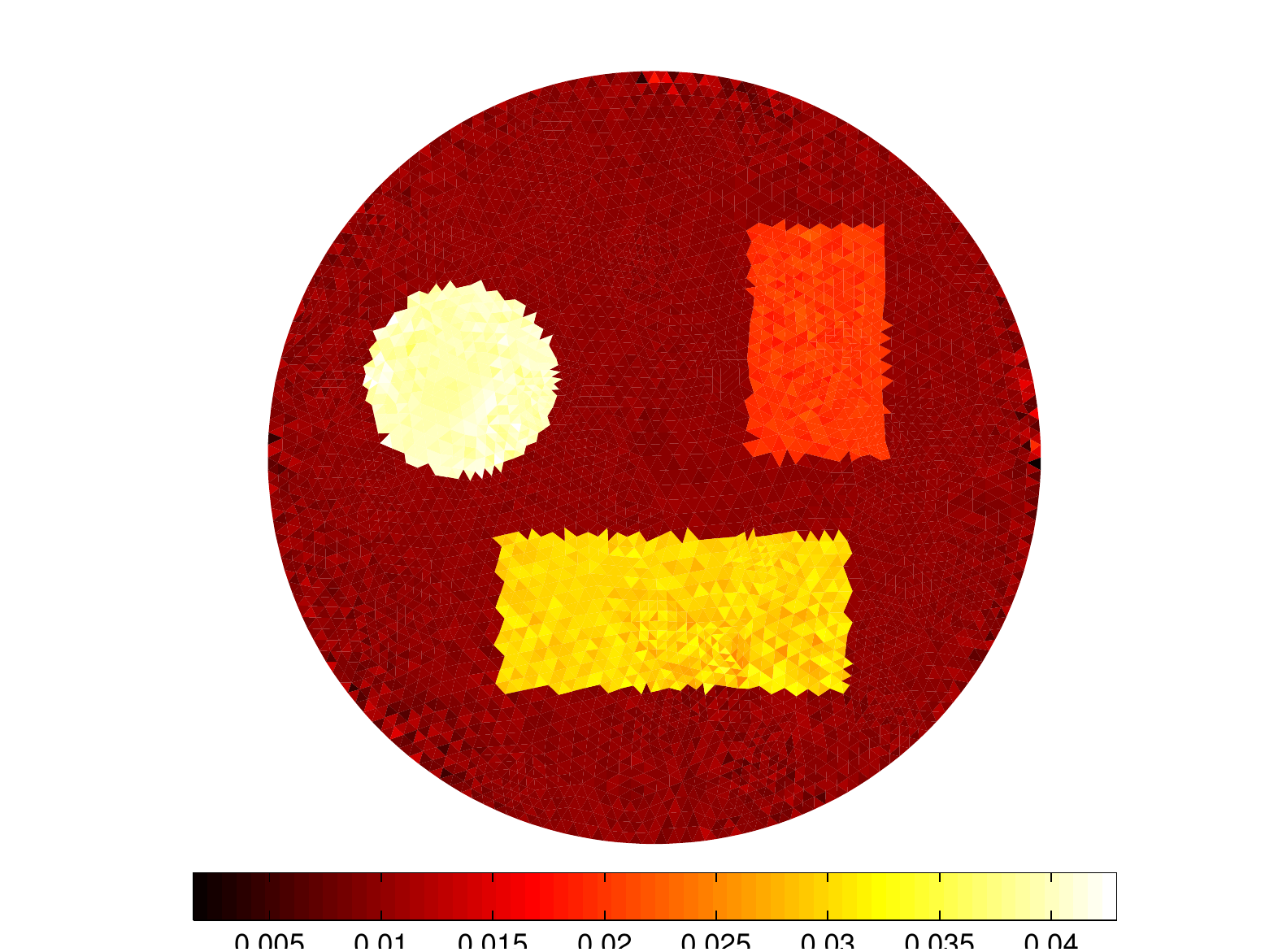}
\end{minipage}
\begin{minipage}{0.24\linewidth}
  \includegraphics[width=\textwidth]{./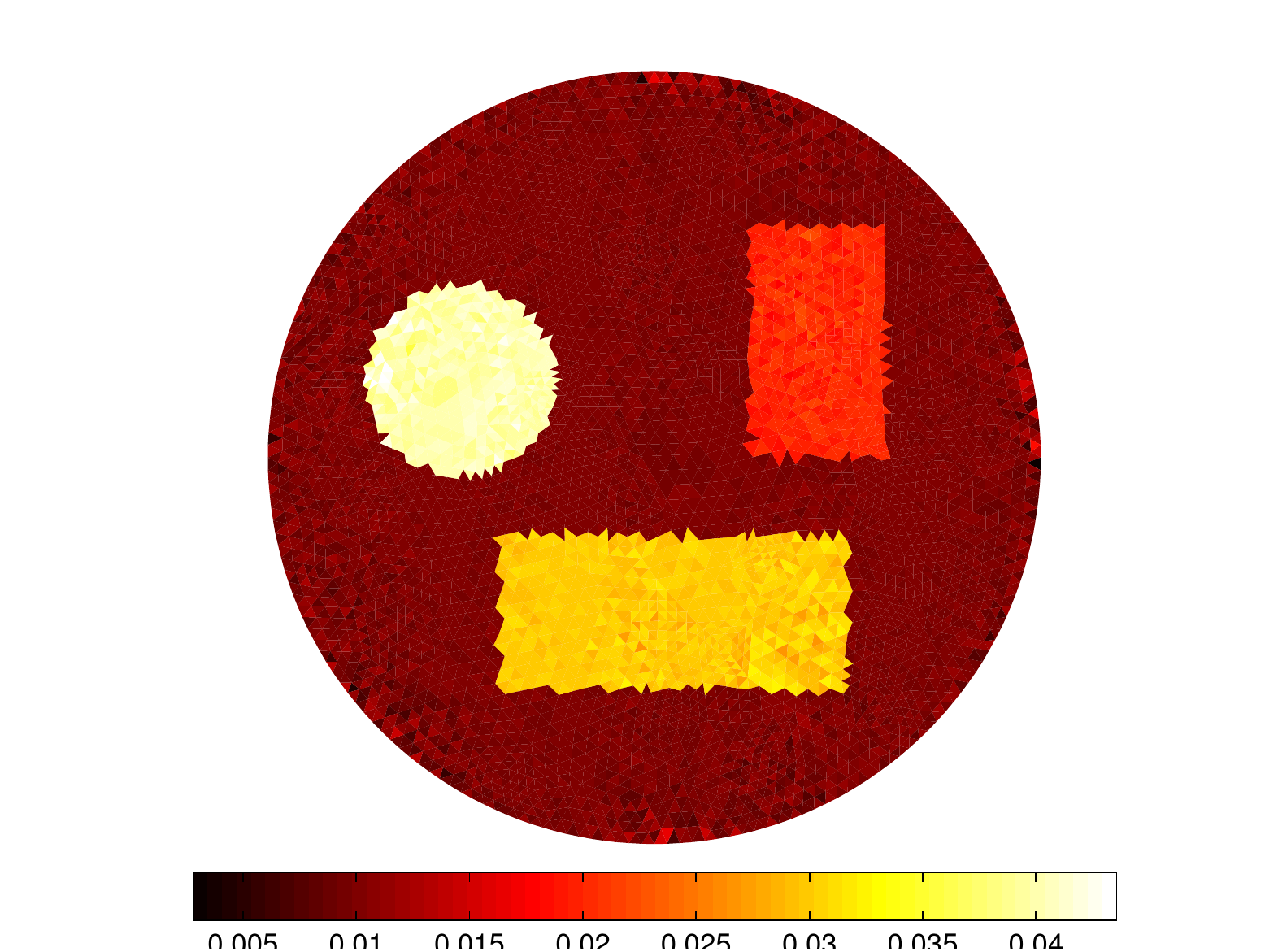}
\end{minipage}
 \caption{\label{fig:noise-free}Reconstruction of $\mu_{a,xf}$ for noise-free data by the hybrid method and the nonlinear optimization method. First, second row: first template. Third, fourth row: second template. First, third row: hybrid method. Second, fourth row: nonlinear optimization method. First, second, third, and fourth column: one-measurement, two-measurement, three-measurement and four-measurement.}
\end{figure}

\begin{figure}[htpb]
    \centering
\begin{minipage}{0.24\linewidth}
\includegraphics[width=\textwidth]{./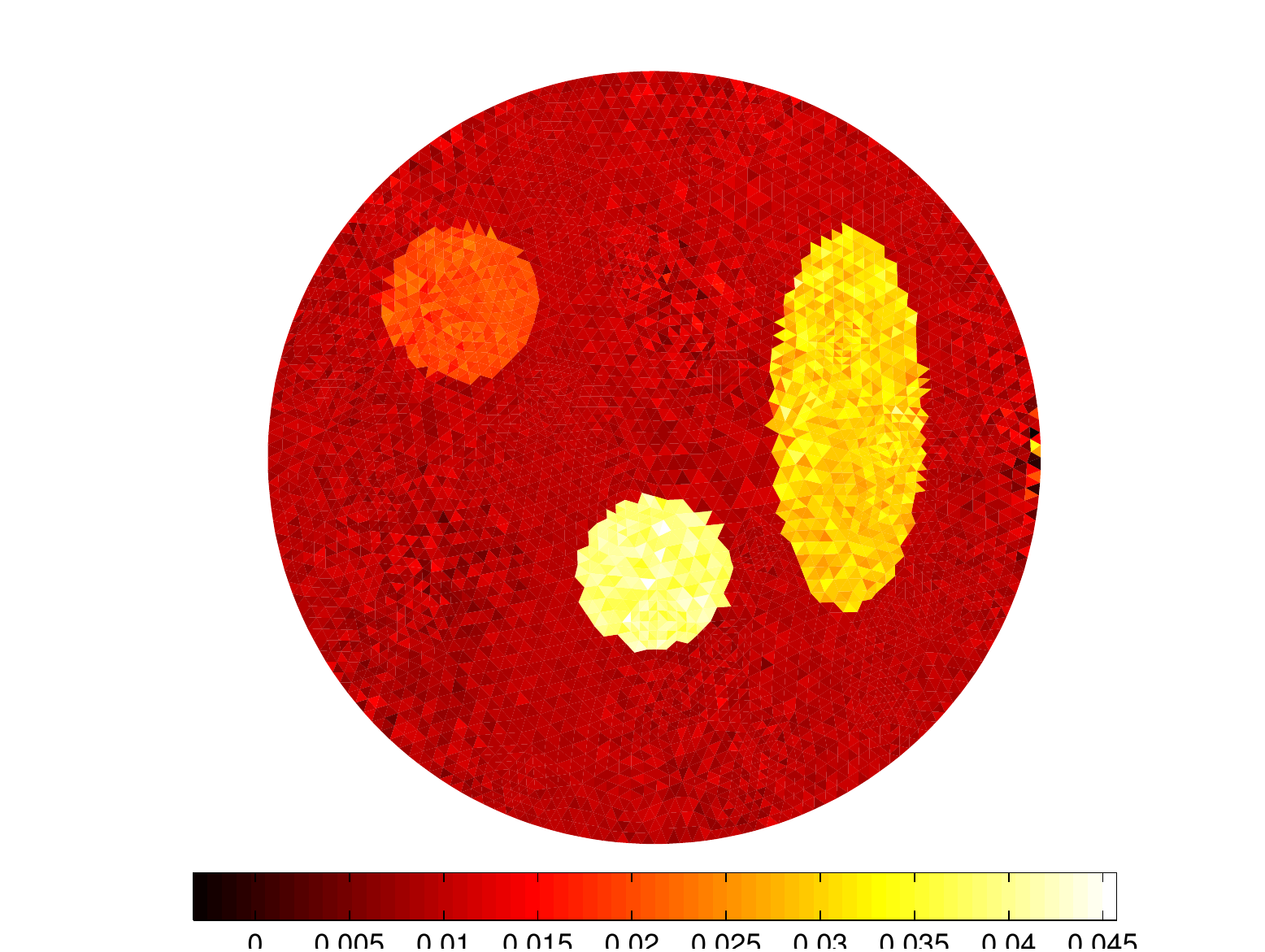}
\end{minipage}
\begin{minipage}{0.24\linewidth}
  \includegraphics[width=\textwidth]{./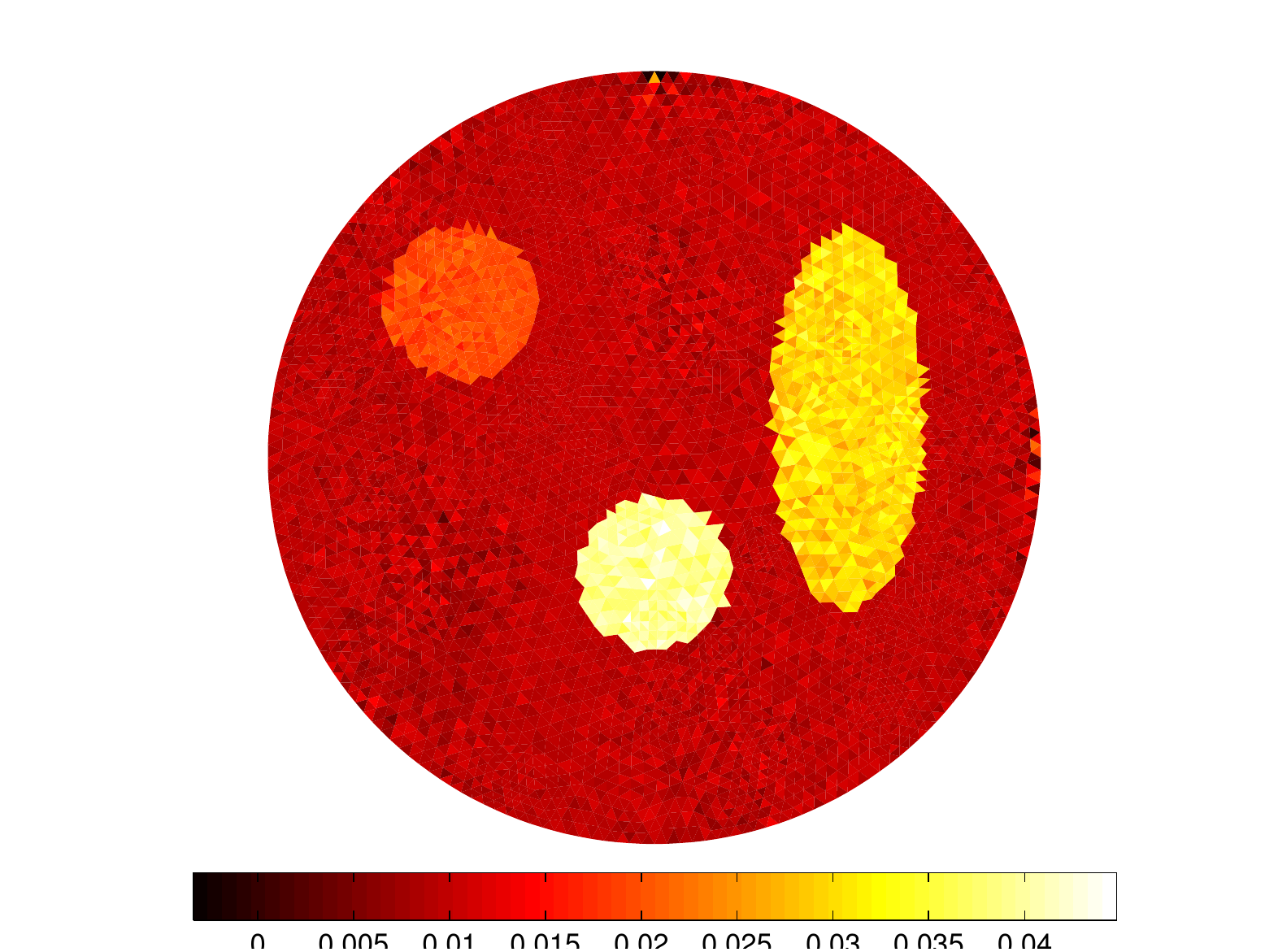}
\end{minipage}
\begin{minipage}{0.24\linewidth}
  \includegraphics[width=\textwidth]{./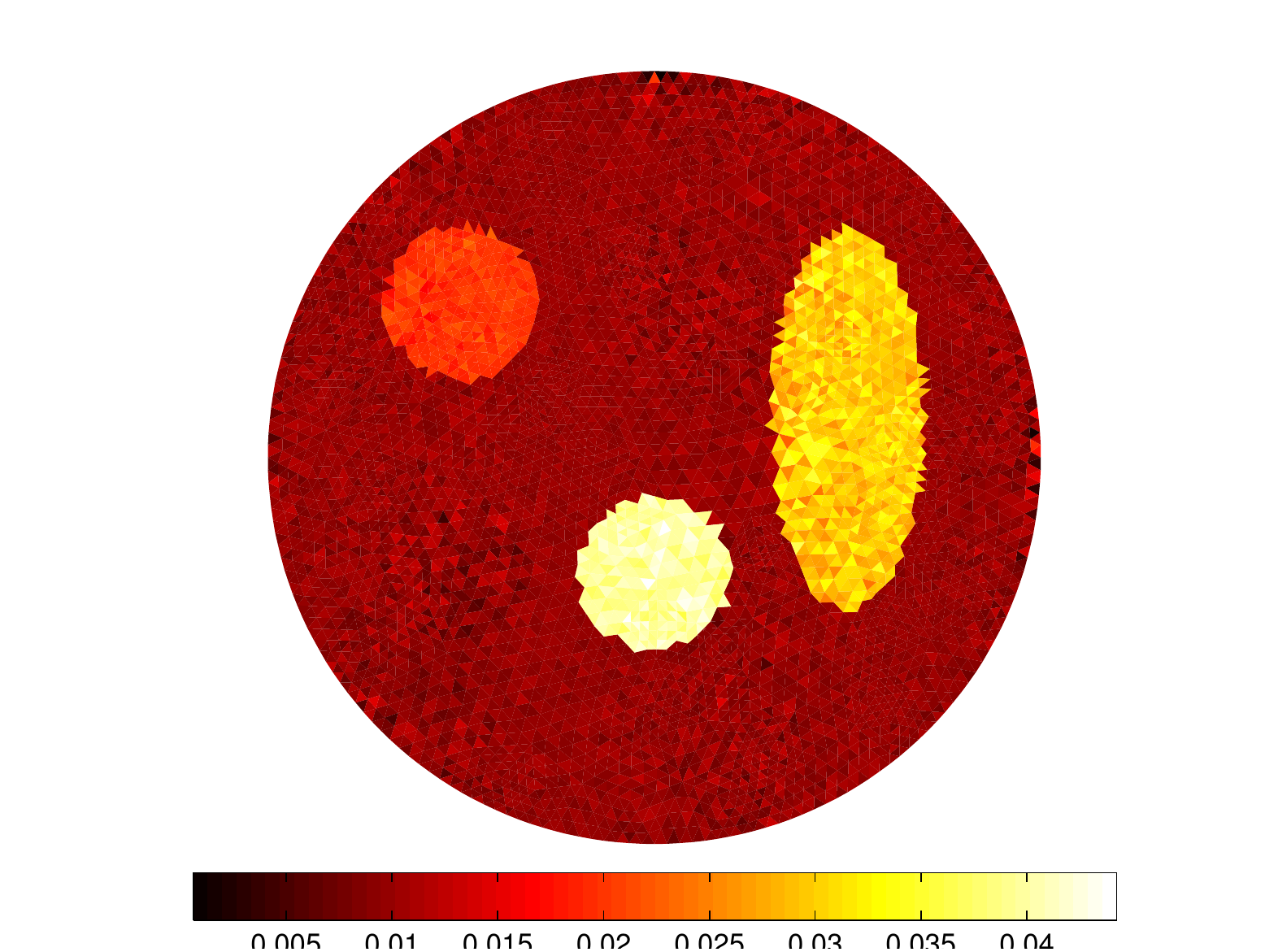}
\end{minipage}
\begin{minipage}{0.24\linewidth}
  \includegraphics[width=\textwidth]{./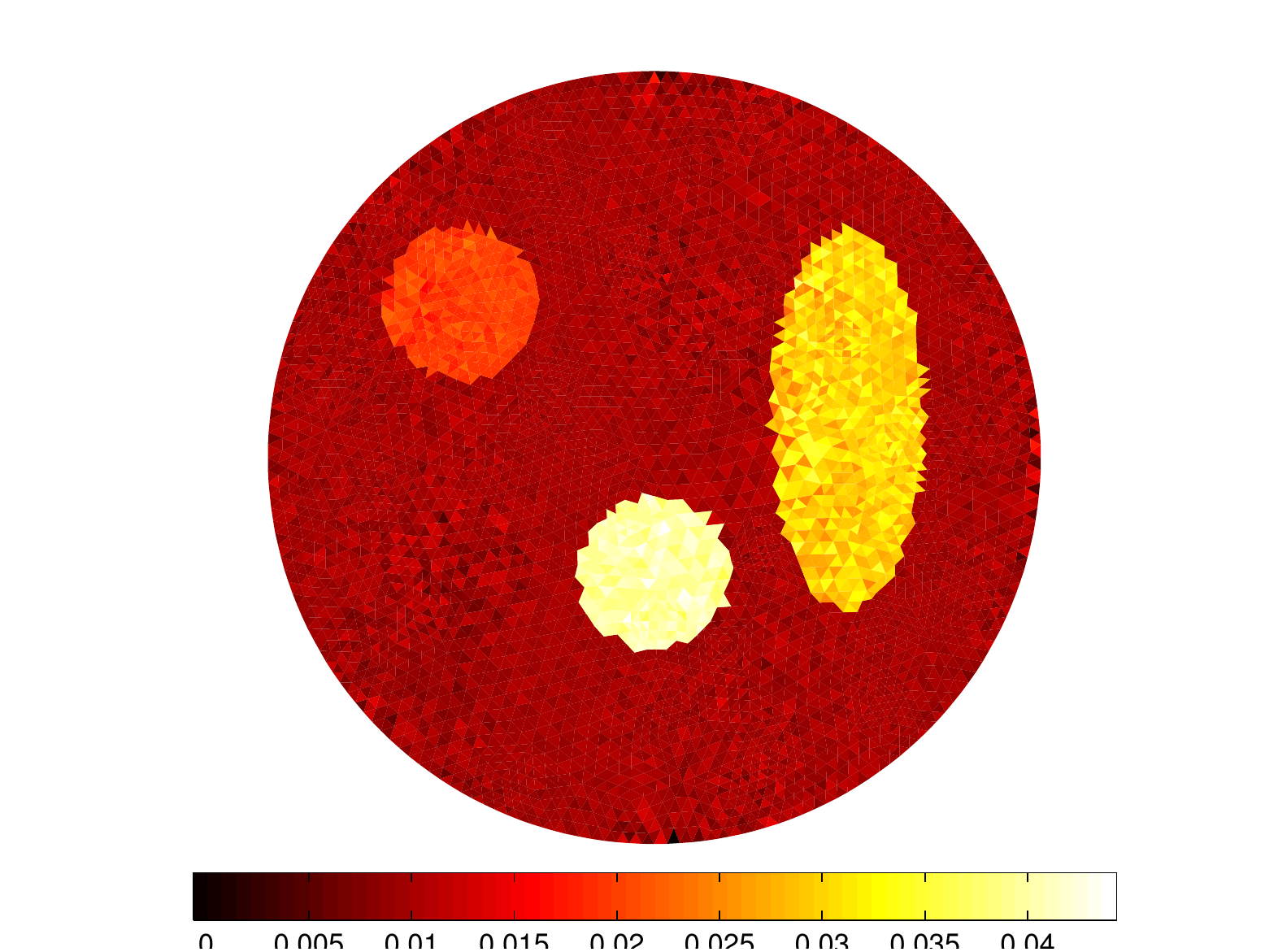}
\end{minipage}
\\
\begin{minipage}{0.24\linewidth}
\includegraphics[width=\textwidth]{./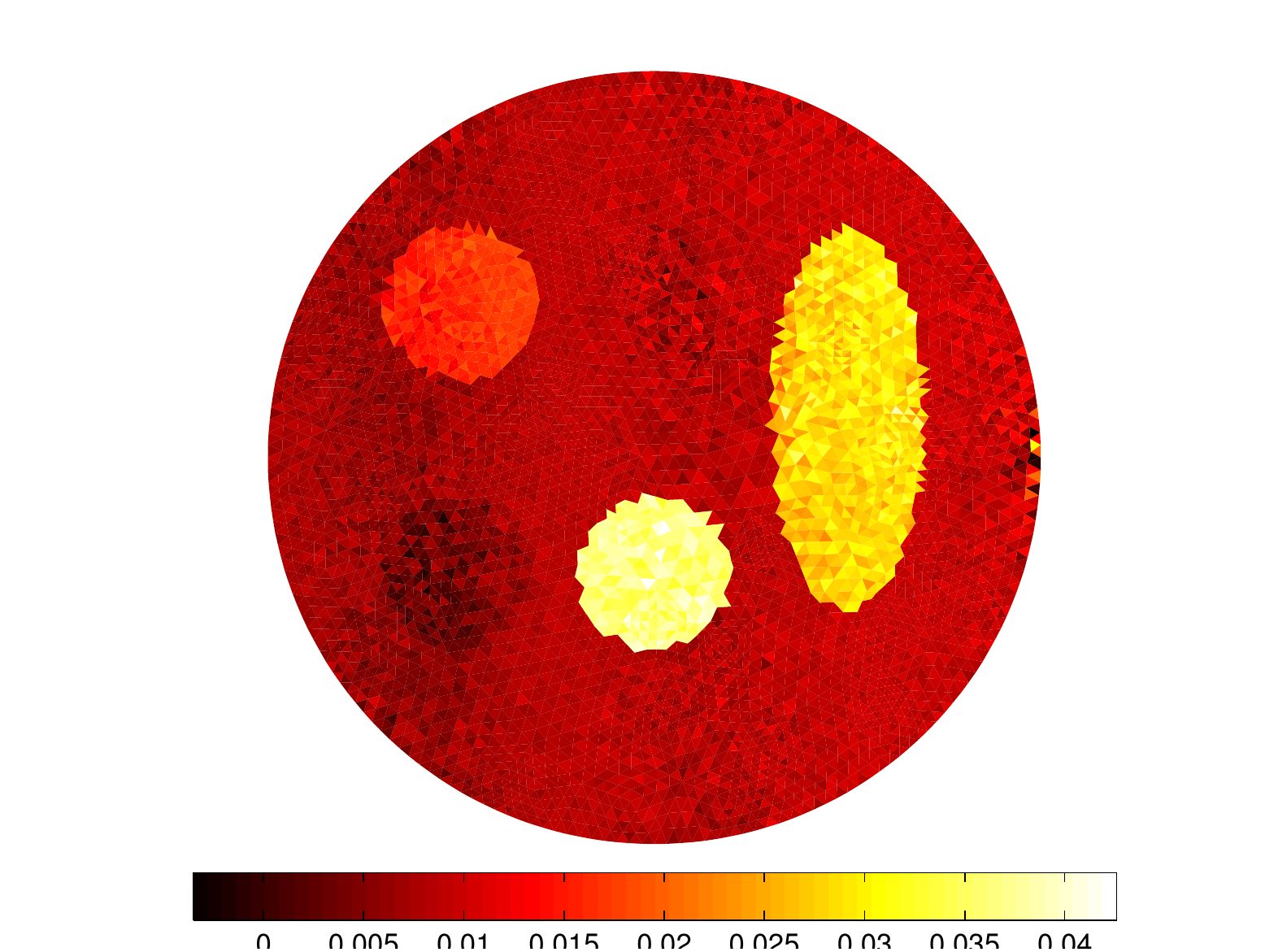}
\end{minipage}
\begin{minipage}{0.24\linewidth}
  \includegraphics[width=\textwidth]{./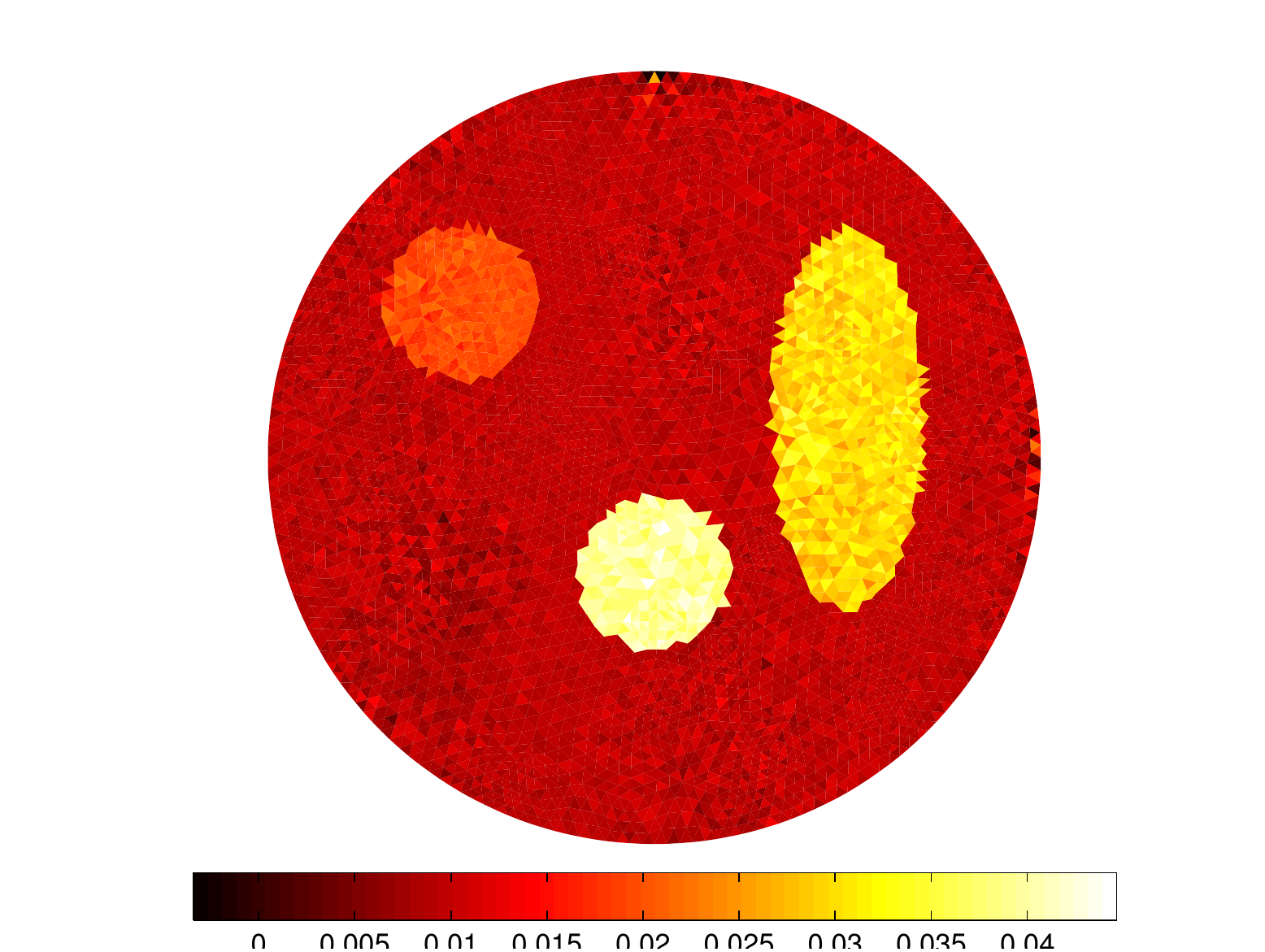}
\end{minipage}
\begin{minipage}{0.24\linewidth}
  \includegraphics[width=\textwidth]{./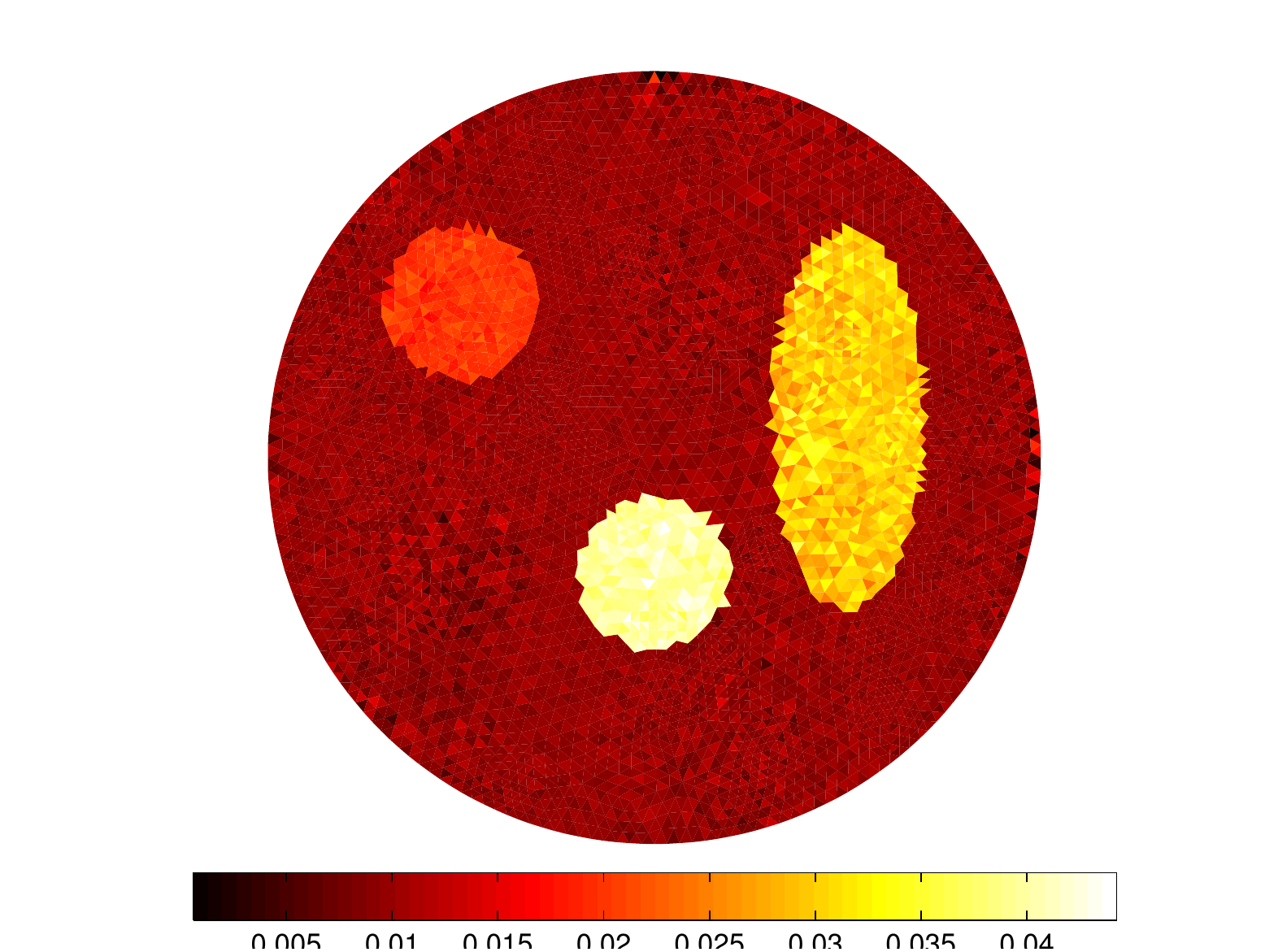}
\end{minipage}
\begin{minipage}{0.24\linewidth}
  \includegraphics[width=\textwidth]{./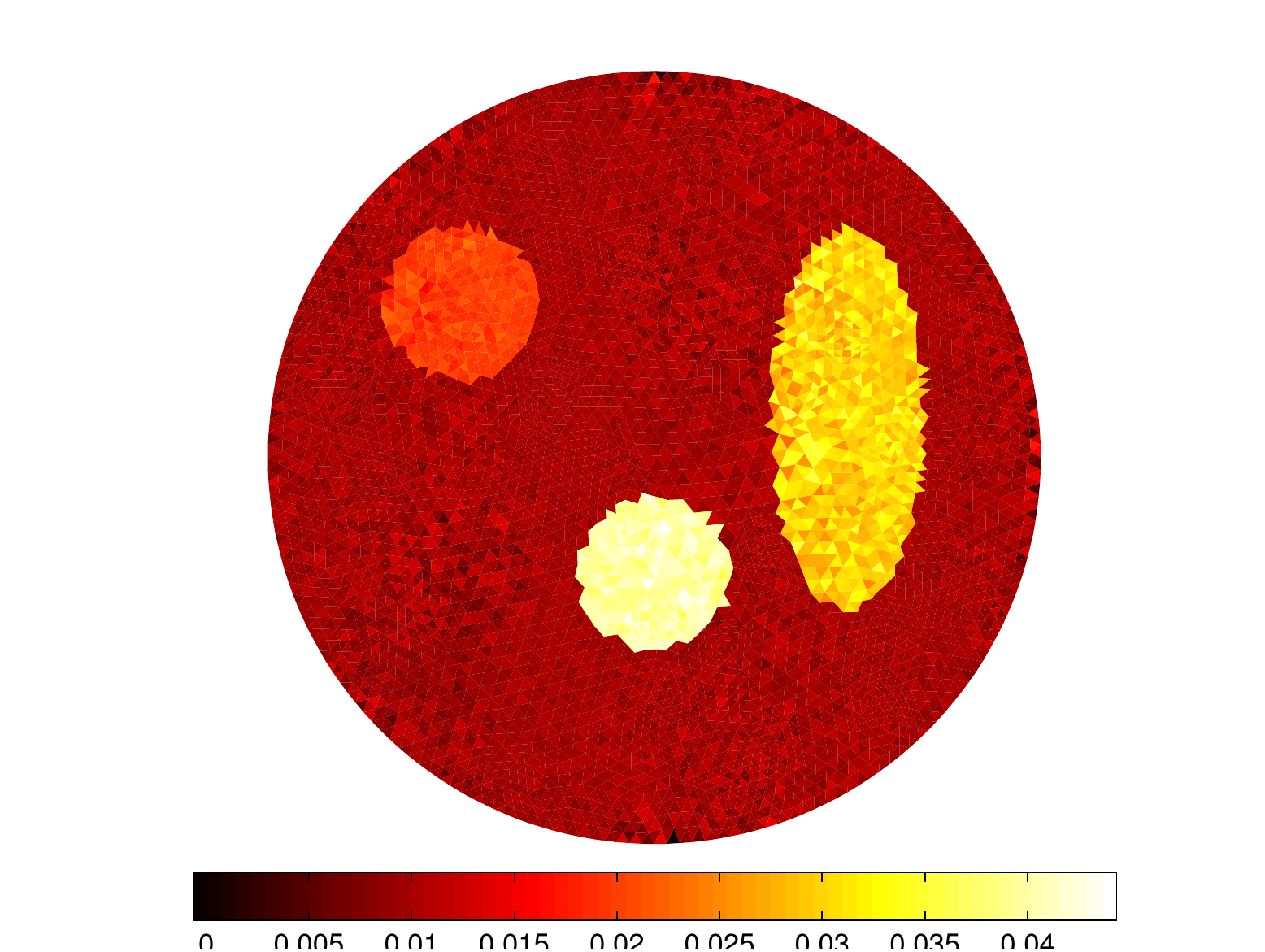}
\end{minipage}
\\
\begin{minipage}{0.24\linewidth}
\includegraphics[width=\textwidth]{./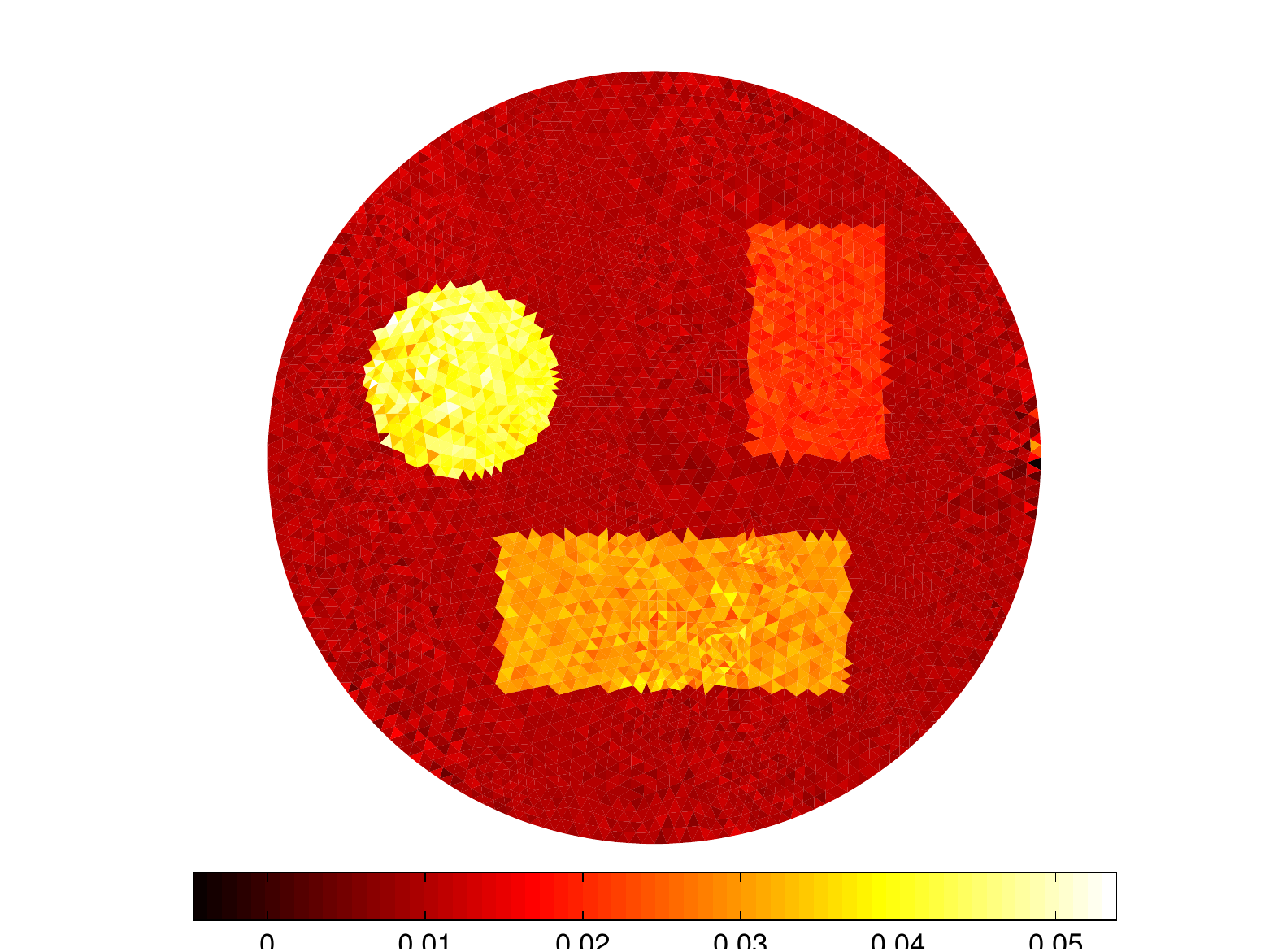}
\end{minipage}
\begin{minipage}{0.24\linewidth}
  \includegraphics[width=\textwidth]{./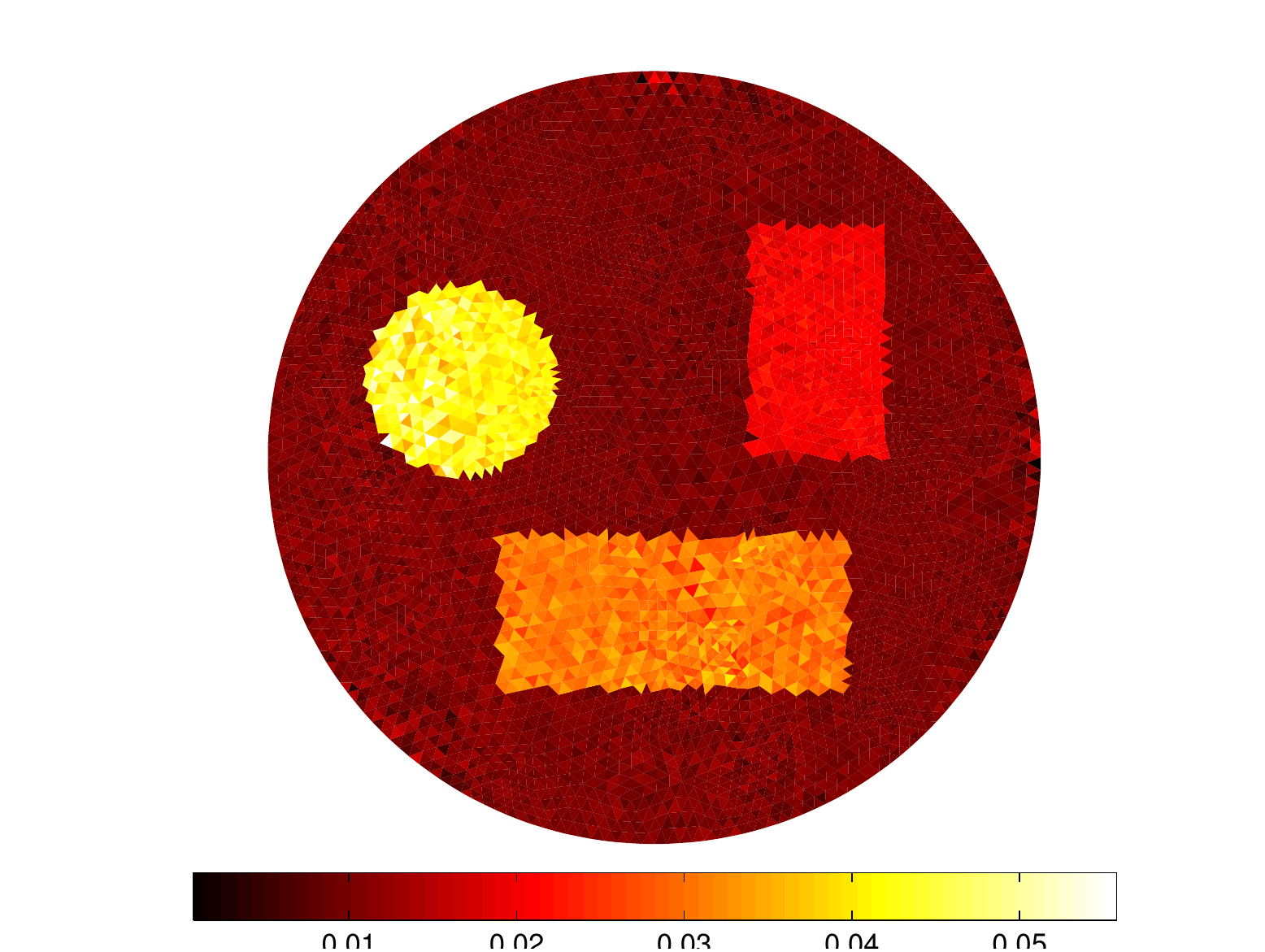}
\end{minipage}
\begin{minipage}{0.24\linewidth}
  \includegraphics[width=\textwidth]{./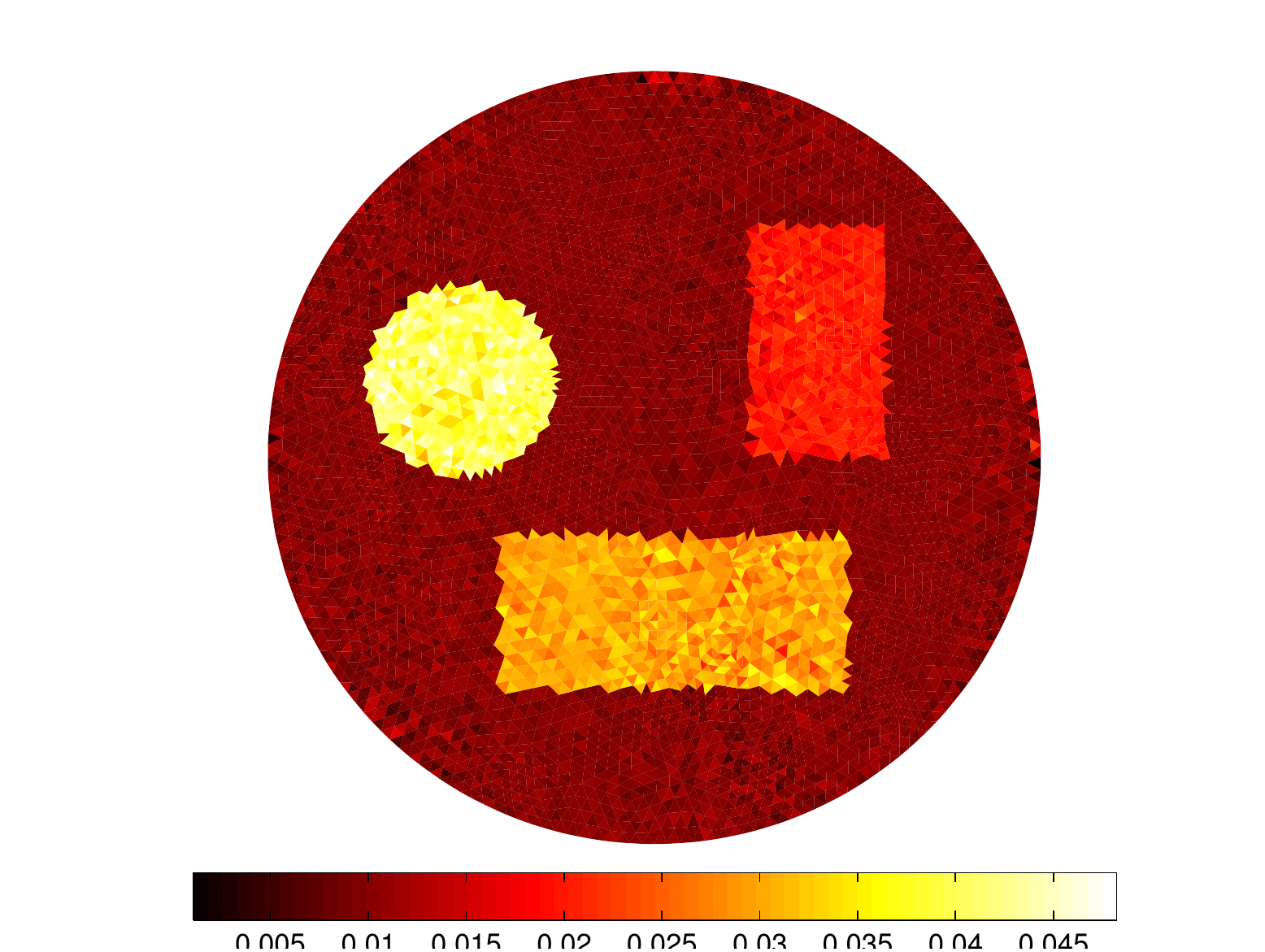}
\end{minipage}
\begin{minipage}{0.24\linewidth}
  \includegraphics[width=\textwidth]{./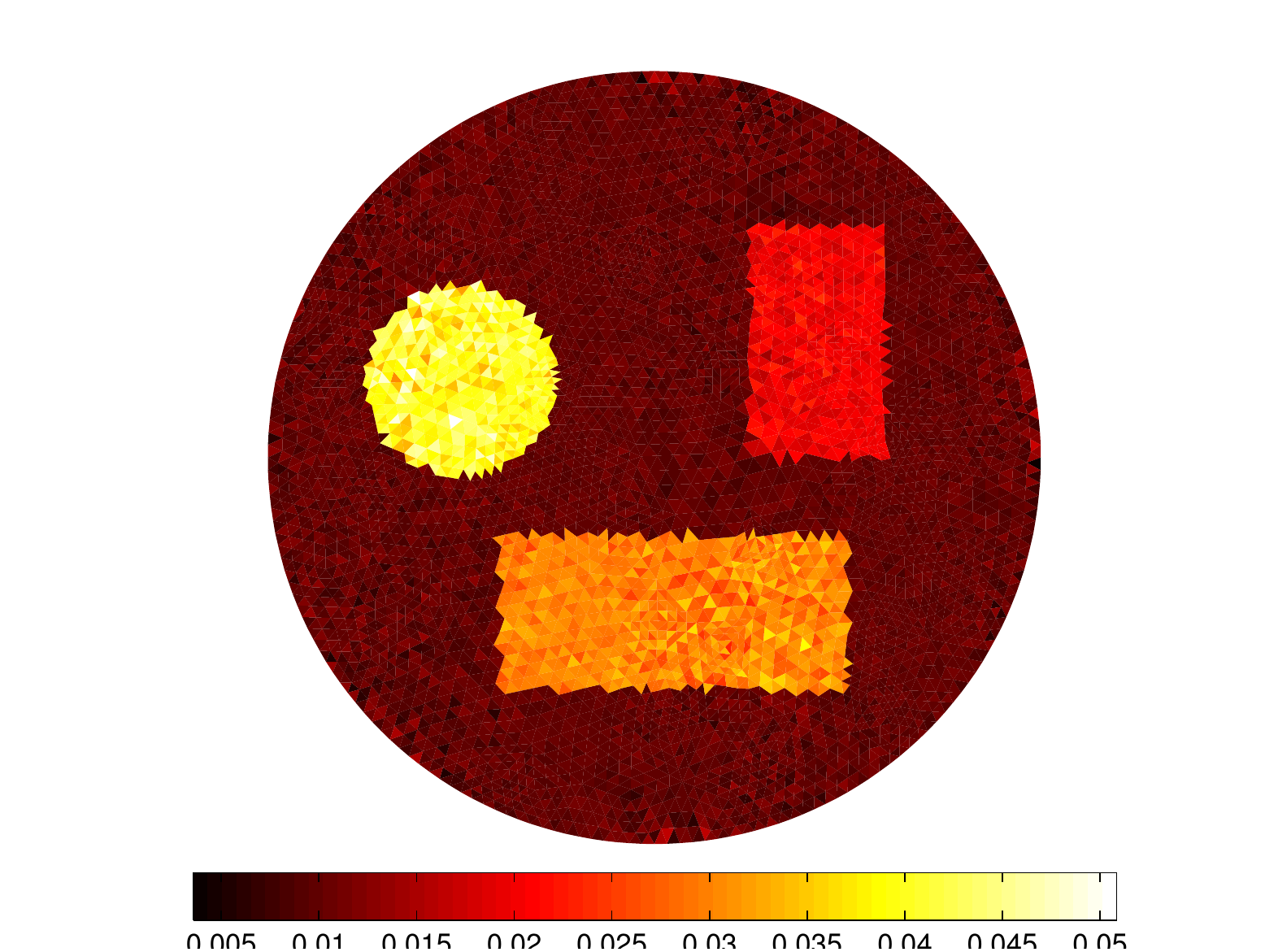}
\end{minipage}
\\
\begin{minipage}{0.24\linewidth}
\includegraphics[width=\textwidth]{./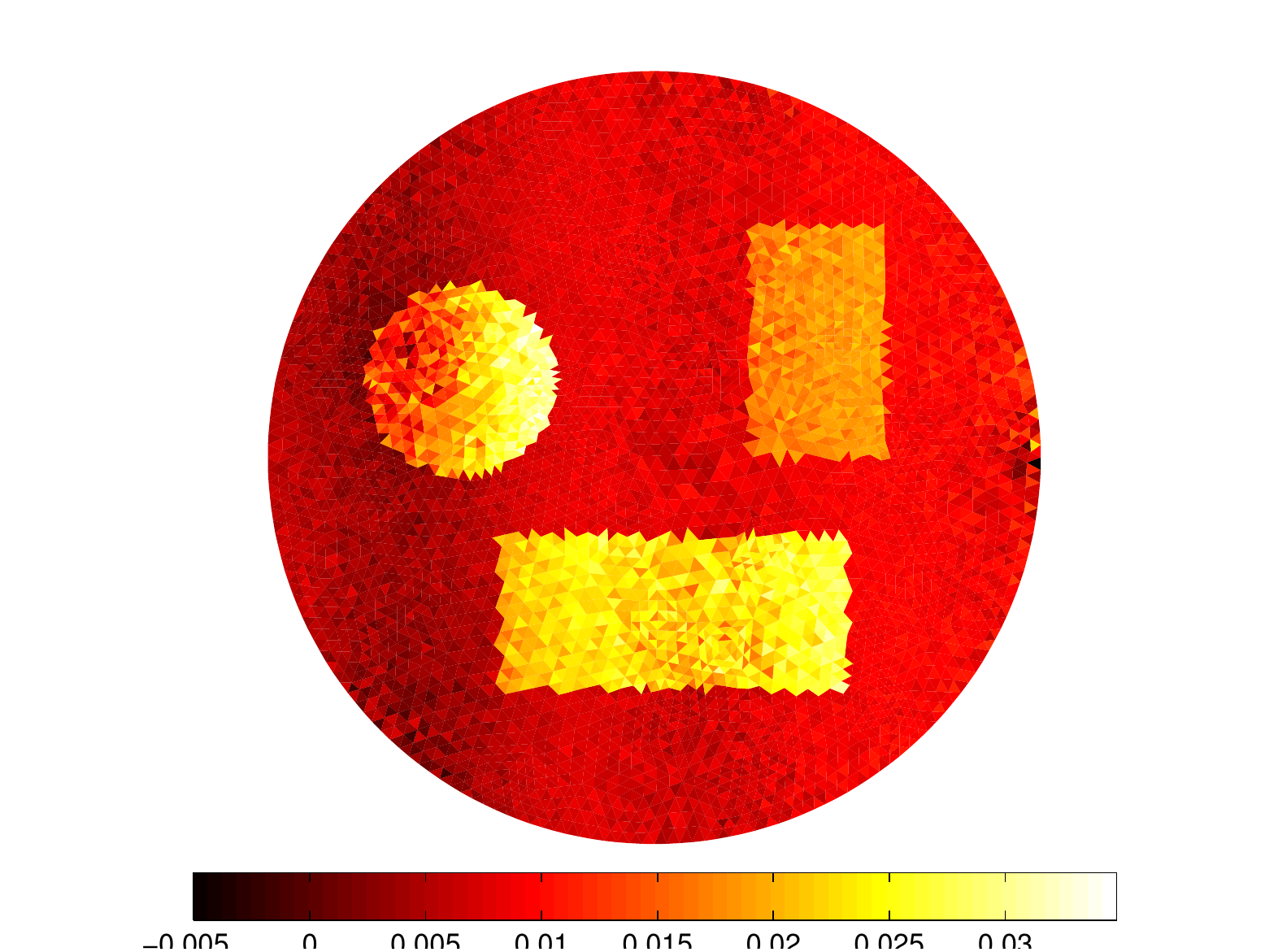}
\end{minipage}
\begin{minipage}{0.24\linewidth}
  \includegraphics[width=\textwidth]{./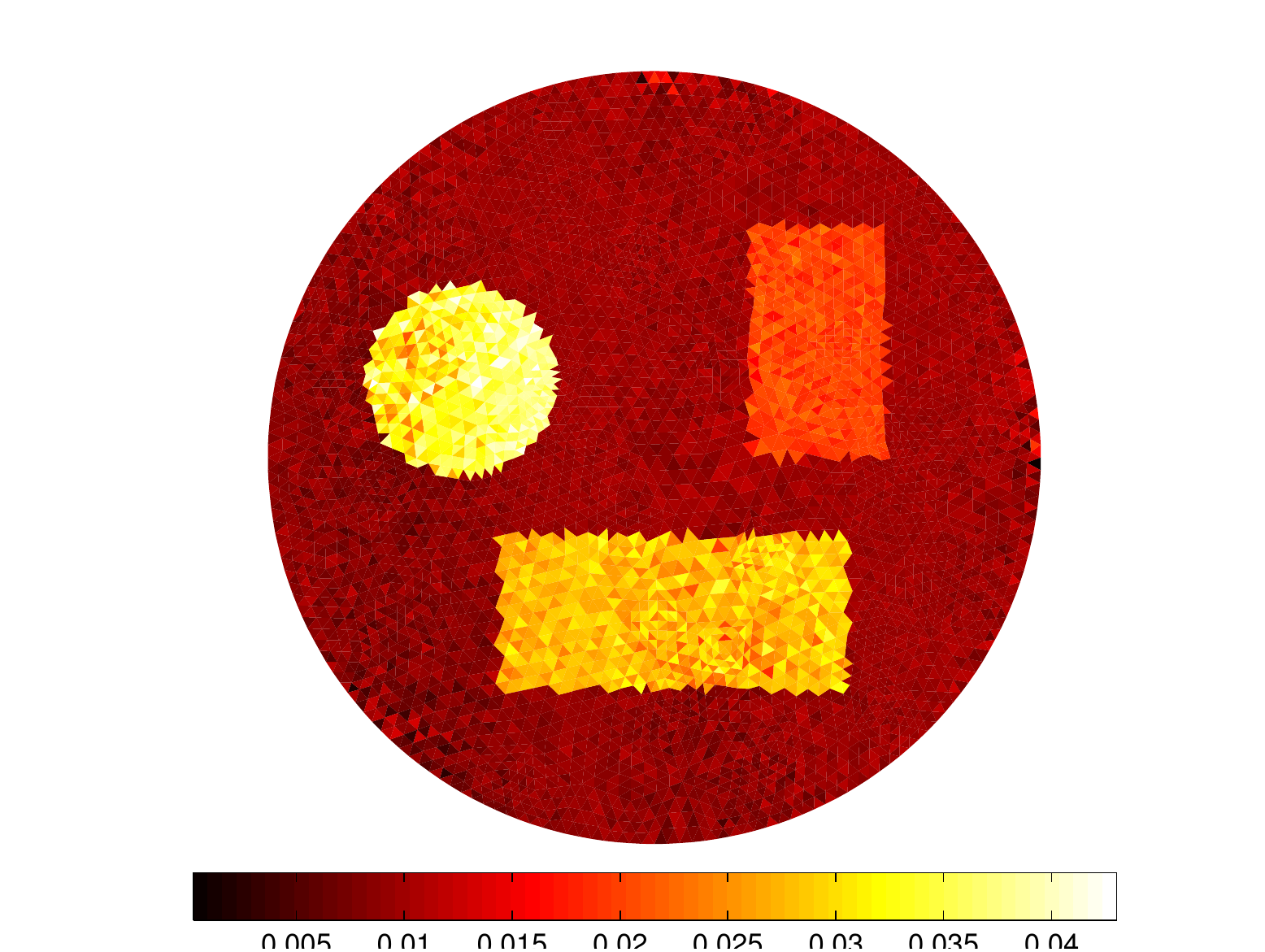}
\end{minipage}
\begin{minipage}{0.24\linewidth}
  \includegraphics[width=\textwidth]{./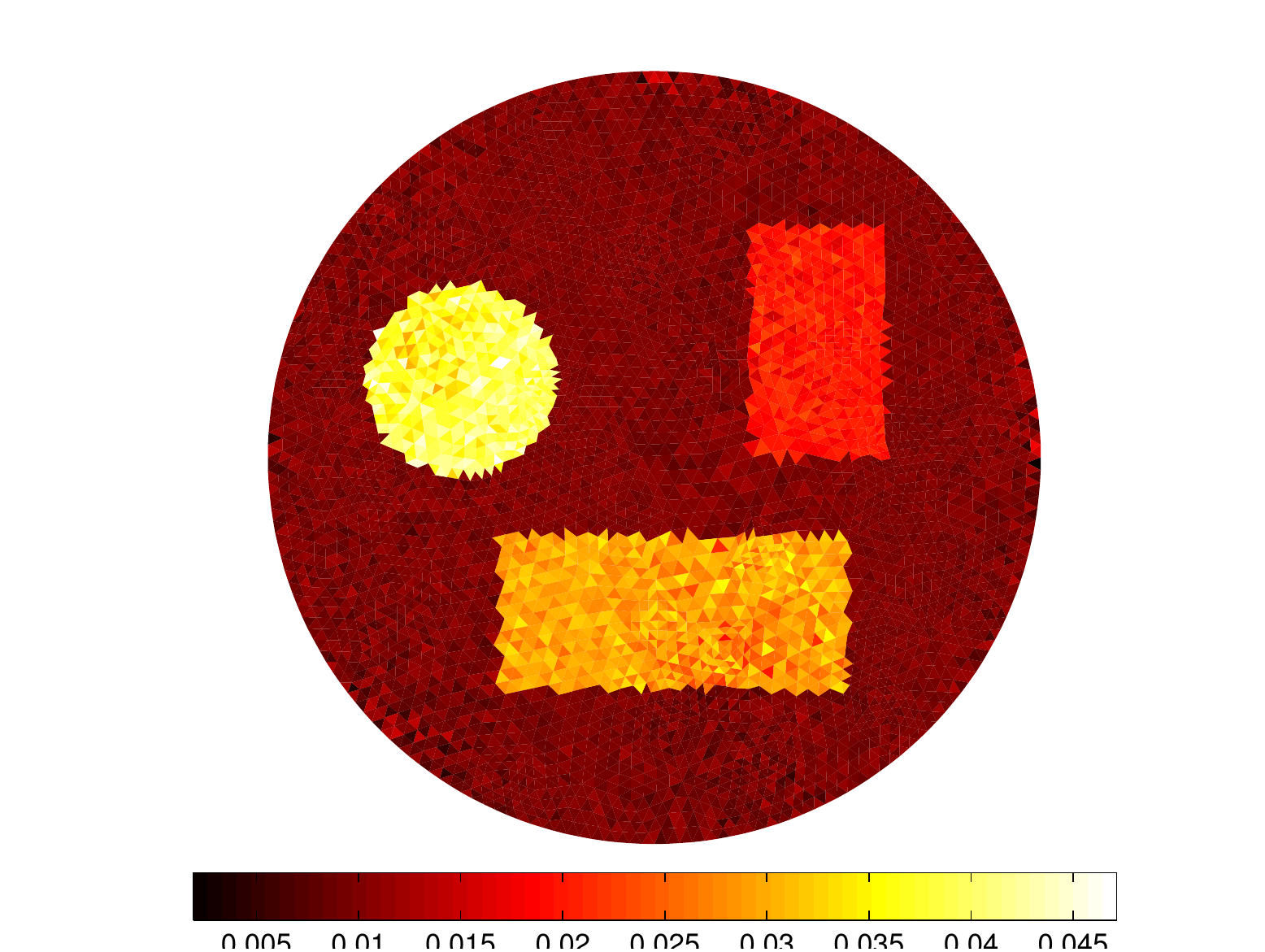}
\end{minipage}
\begin{minipage}{0.24\linewidth}
  \includegraphics[width=\textwidth]{./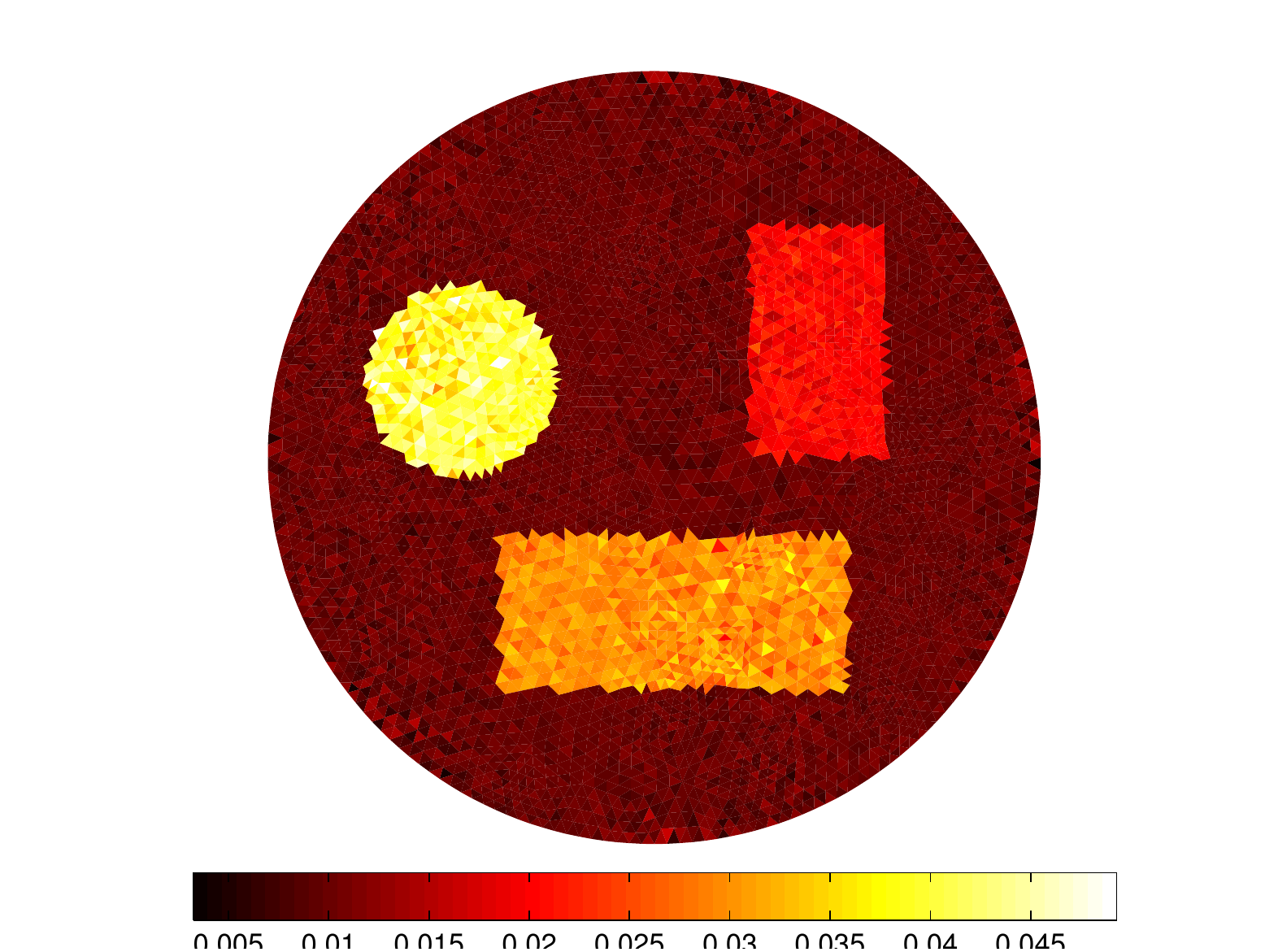}
\end{minipage}
 \caption{\label{fig:noise2}Reconstruction of $\mu_{a,xf}$ for 2$\%$ noise data by the hybrid method and the nonlinear optimization method. First, second row: first template. Third, fourth row: second template. First, third row: hybrid method. Second, fourth row: nonlinear optimization method. First, second, third, and fourth column: one-measurement, two-measurement, three-measurement and four-measurement.}
\end{figure}

\begin{figure}[htpb]
    \centering
\begin{minipage}{0.24\linewidth}
\includegraphics[width=\textwidth]{./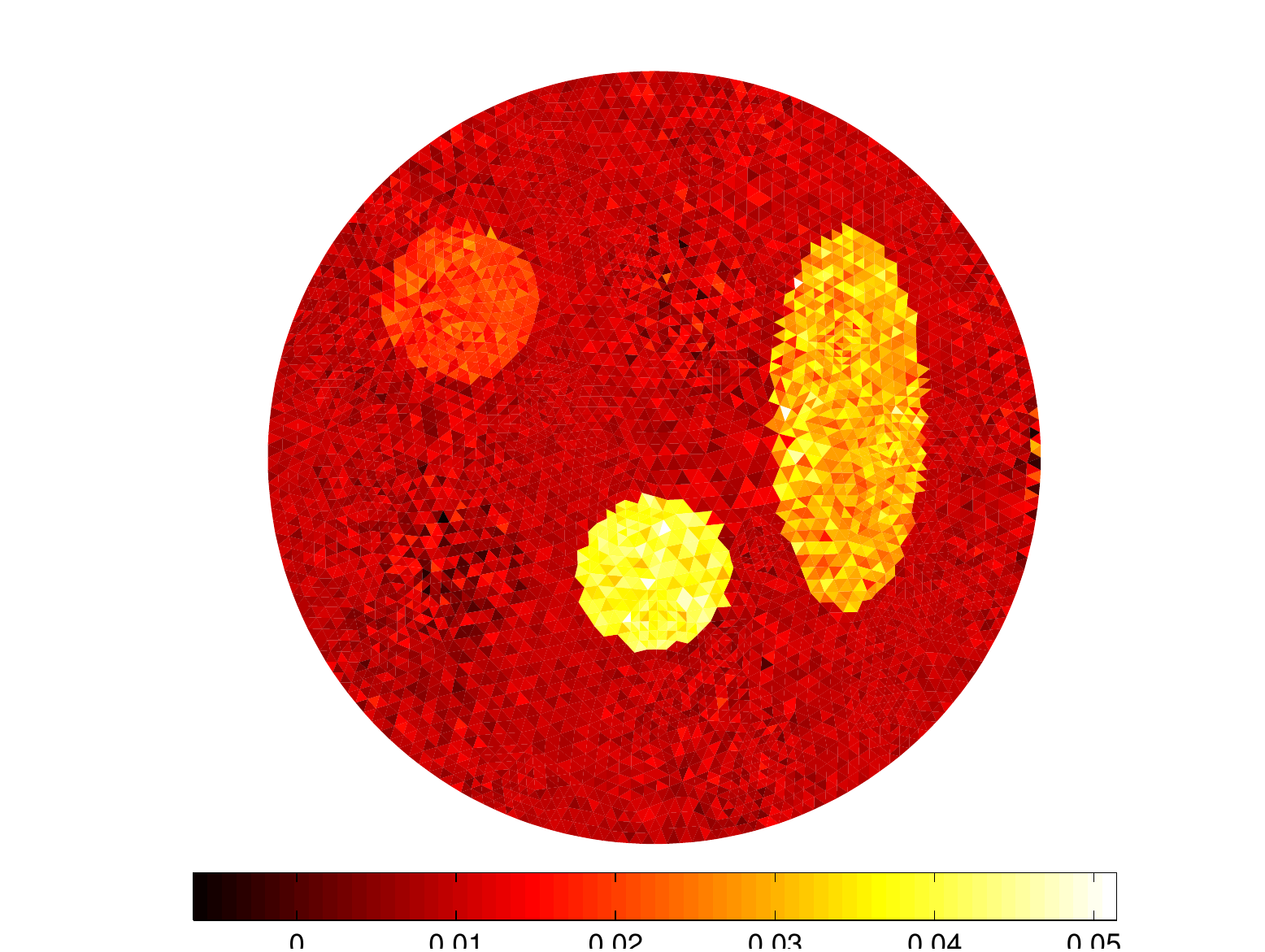}
\end{minipage}
\begin{minipage}{0.24\linewidth}
  \includegraphics[width=\textwidth]{./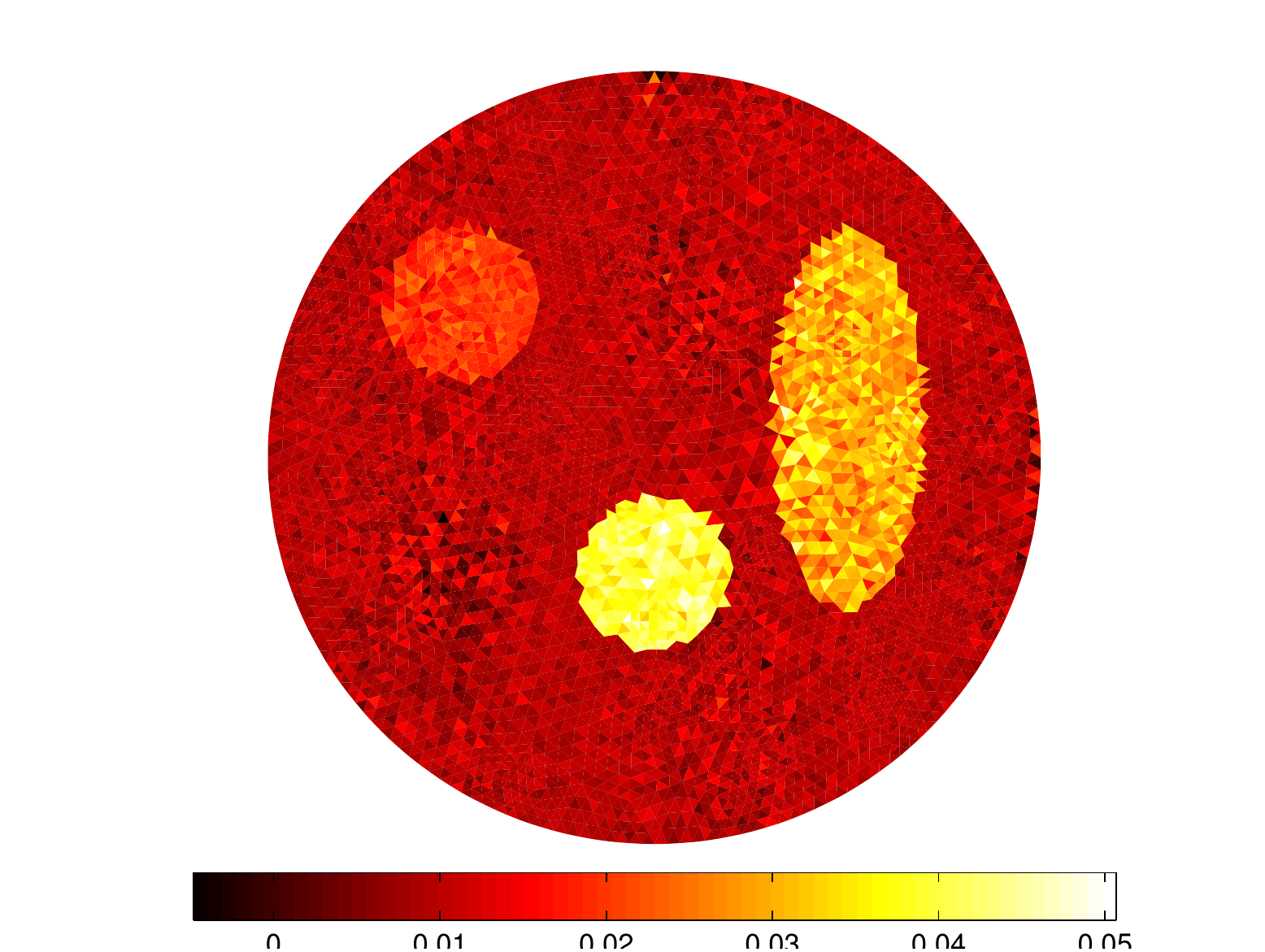}
\end{minipage}
\begin{minipage}{0.24\linewidth}
  \includegraphics[width=\textwidth]{./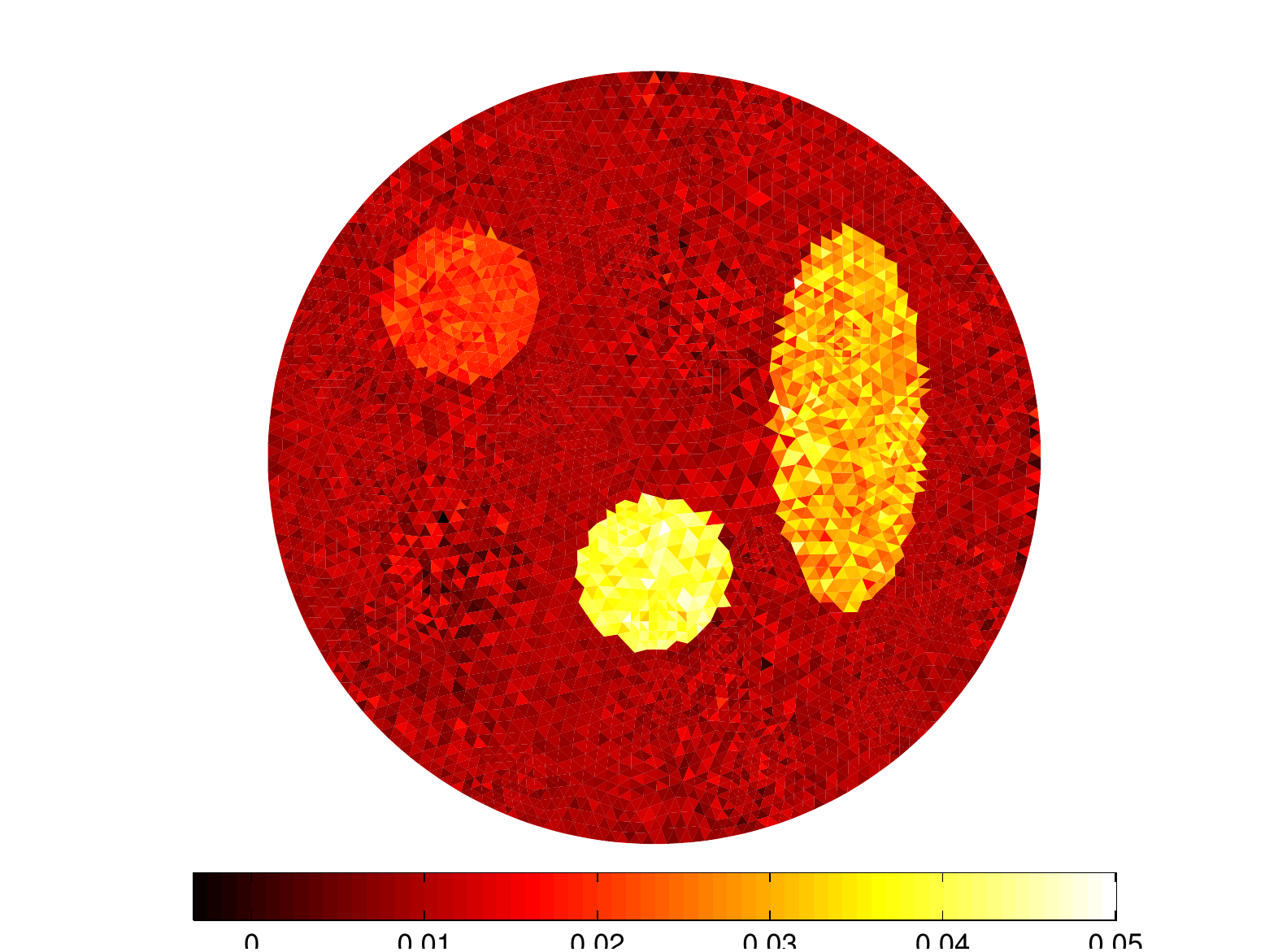}
\end{minipage}
\begin{minipage}{0.24\linewidth}
  \includegraphics[width=\textwidth]{./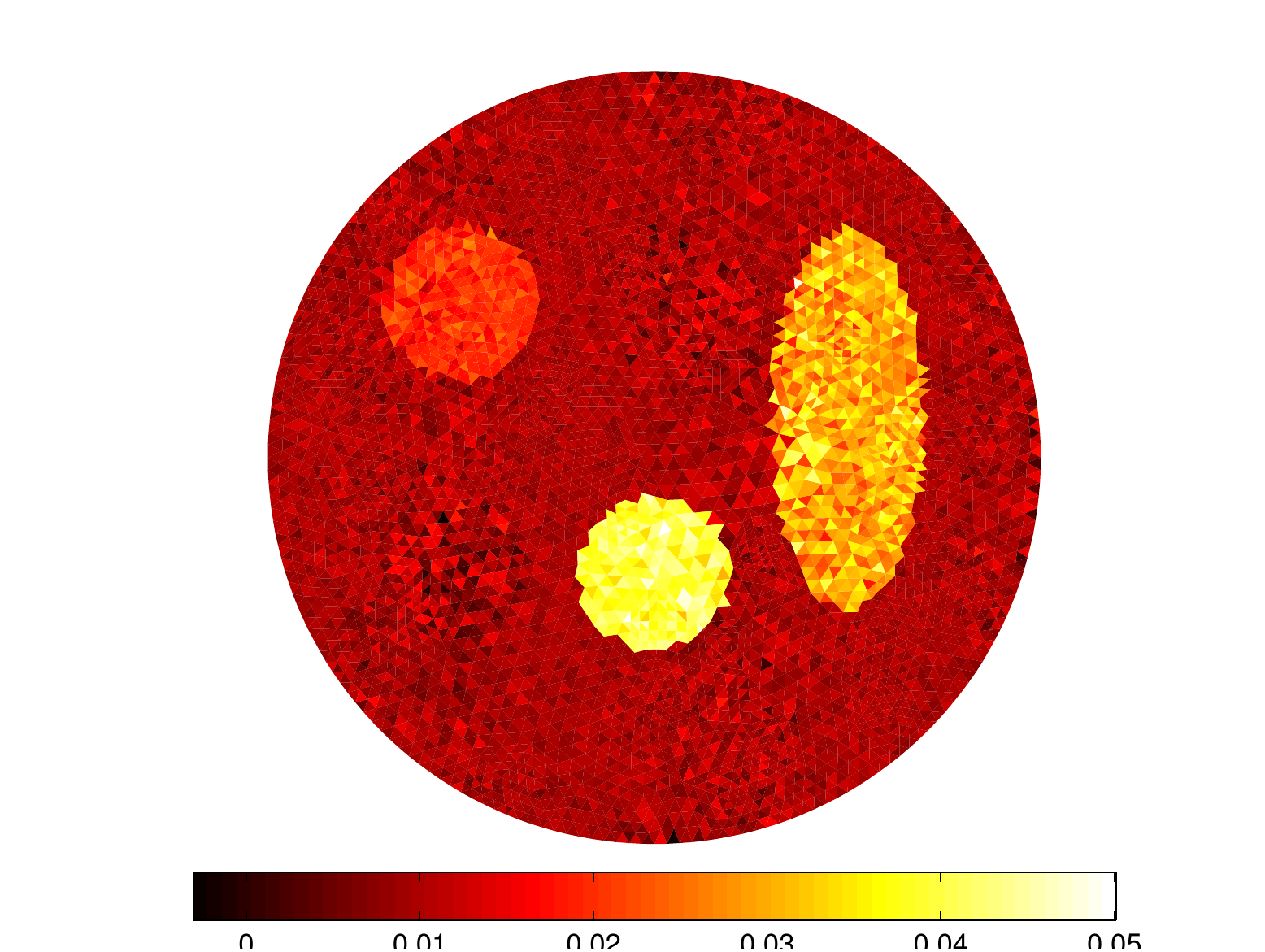}
\end{minipage}
\\
\begin{minipage}{0.24\linewidth}
\includegraphics[width=\textwidth]{./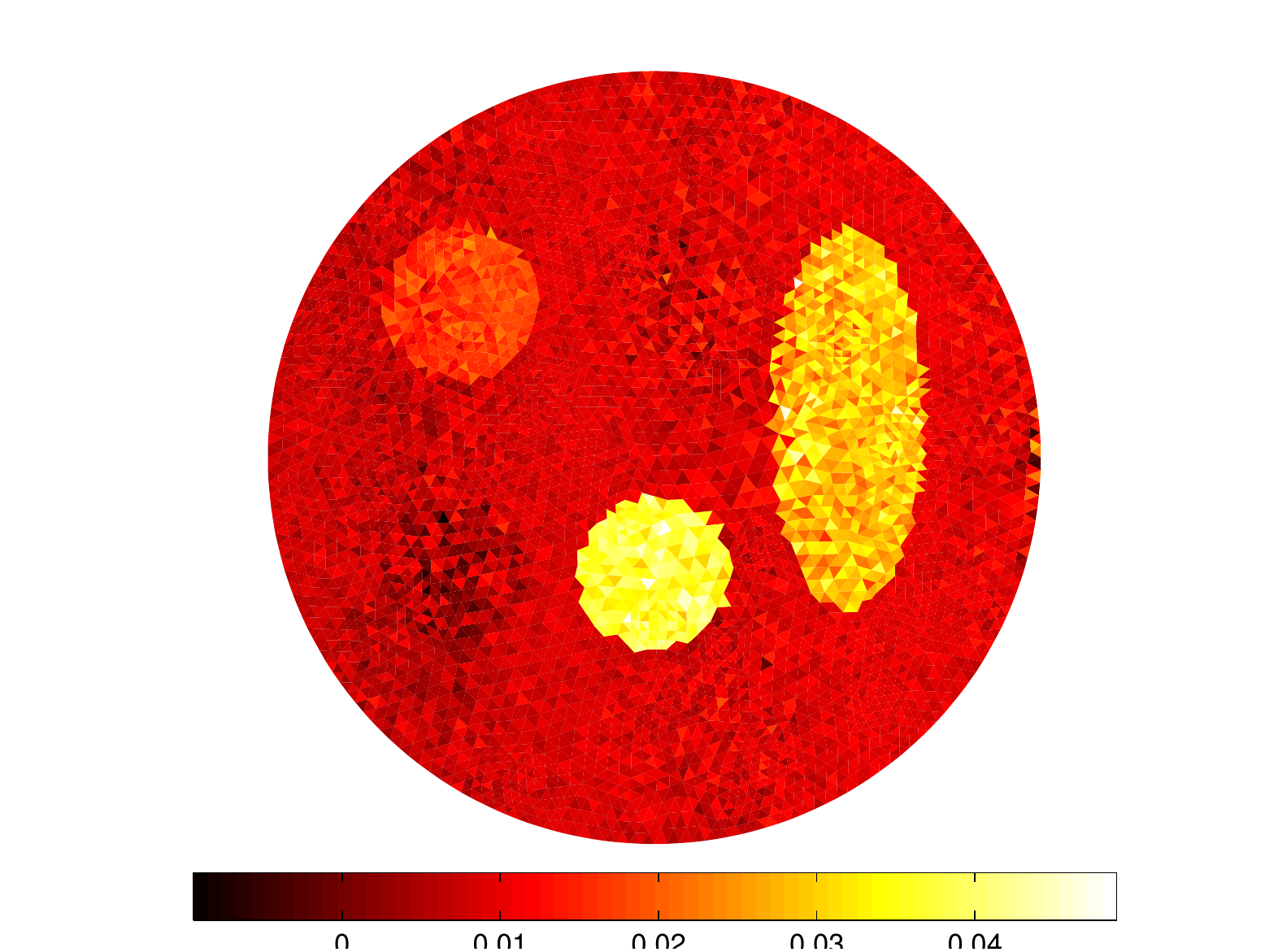}
\end{minipage}
\begin{minipage}{0.24\linewidth}
  \includegraphics[width=\textwidth]{./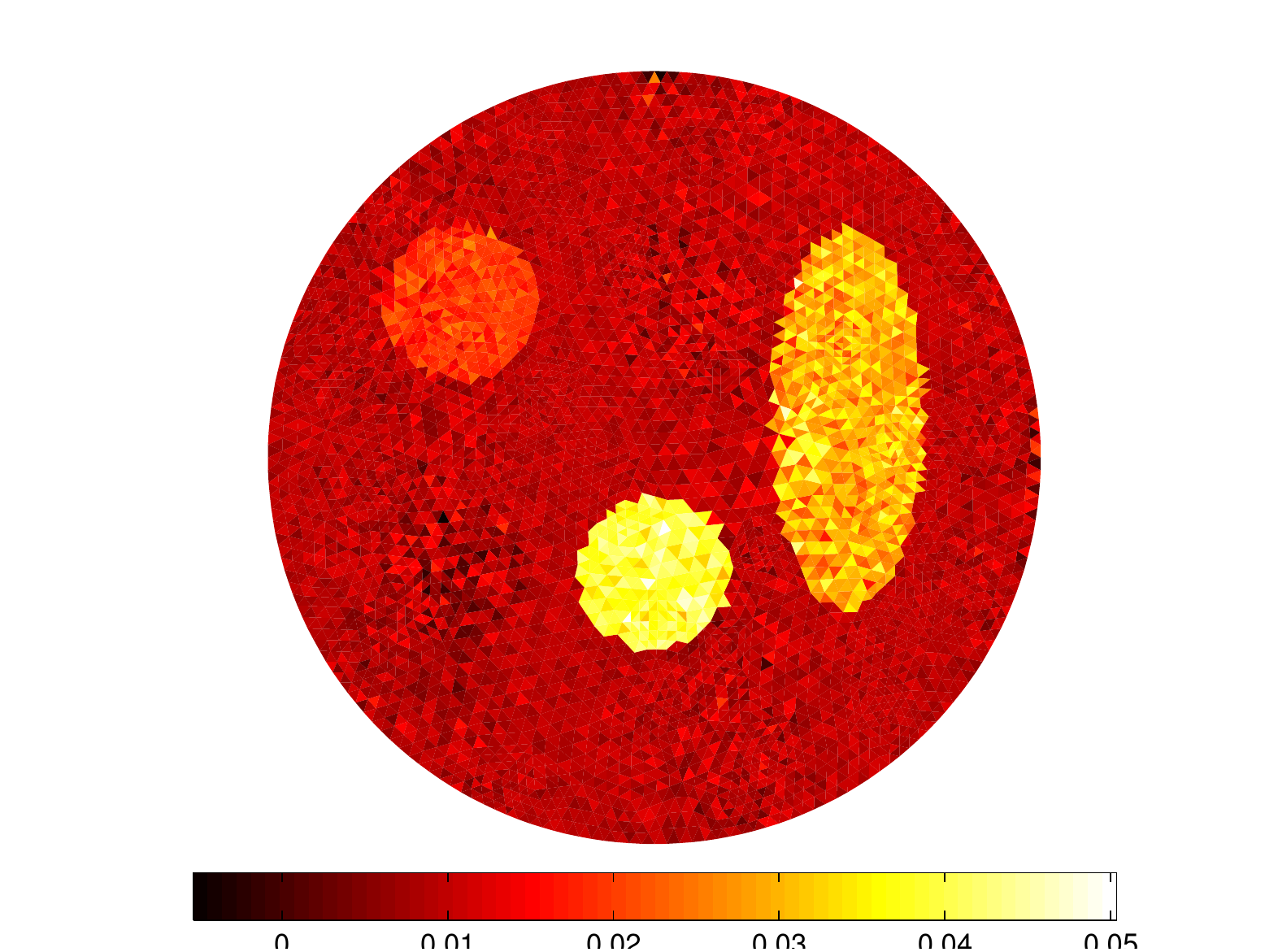}
\end{minipage}
\begin{minipage}{0.24\linewidth}
  \includegraphics[width=\textwidth]{./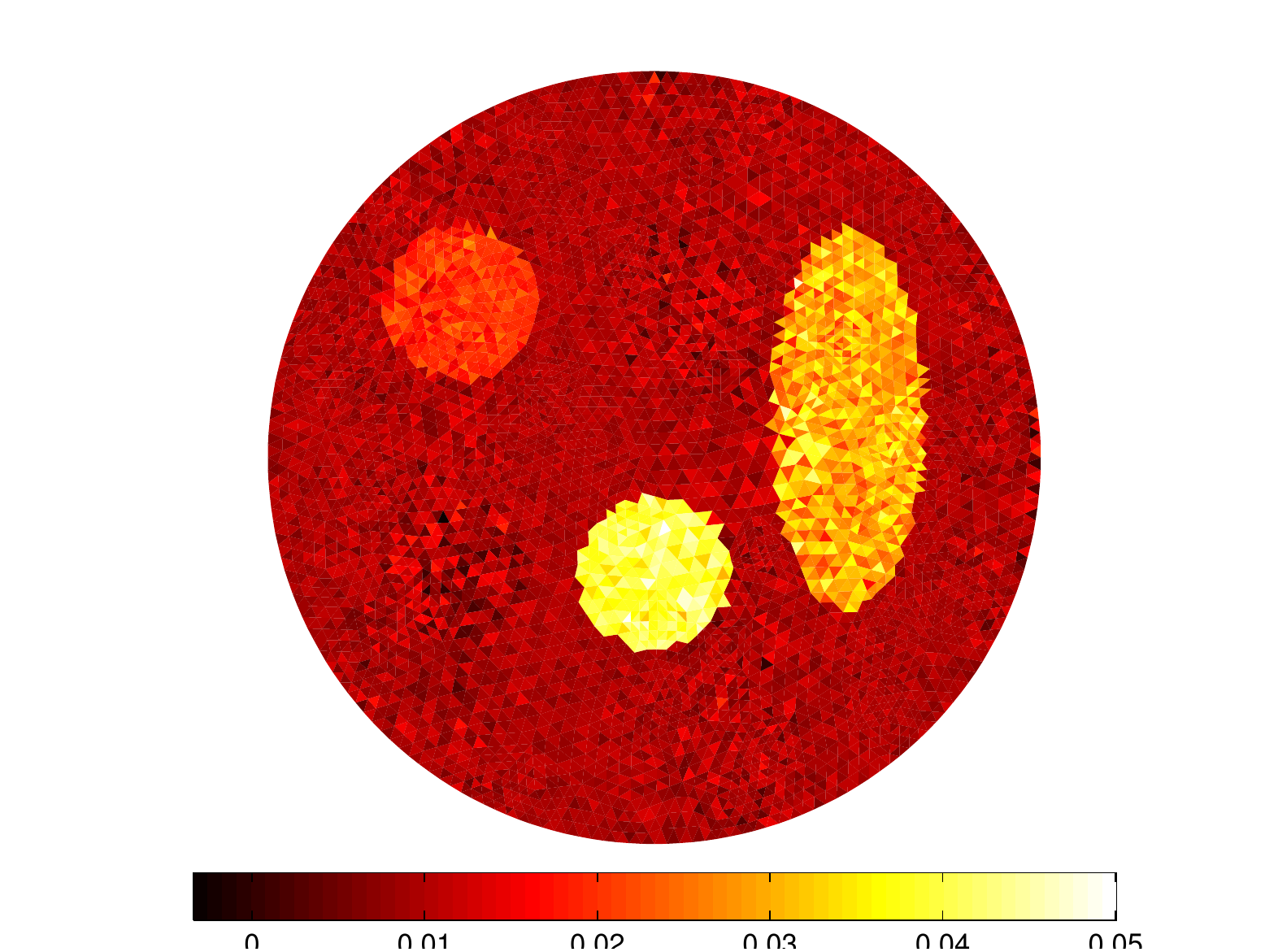}
\end{minipage}
\begin{minipage}{0.24\linewidth}
  \includegraphics[width=\textwidth]{./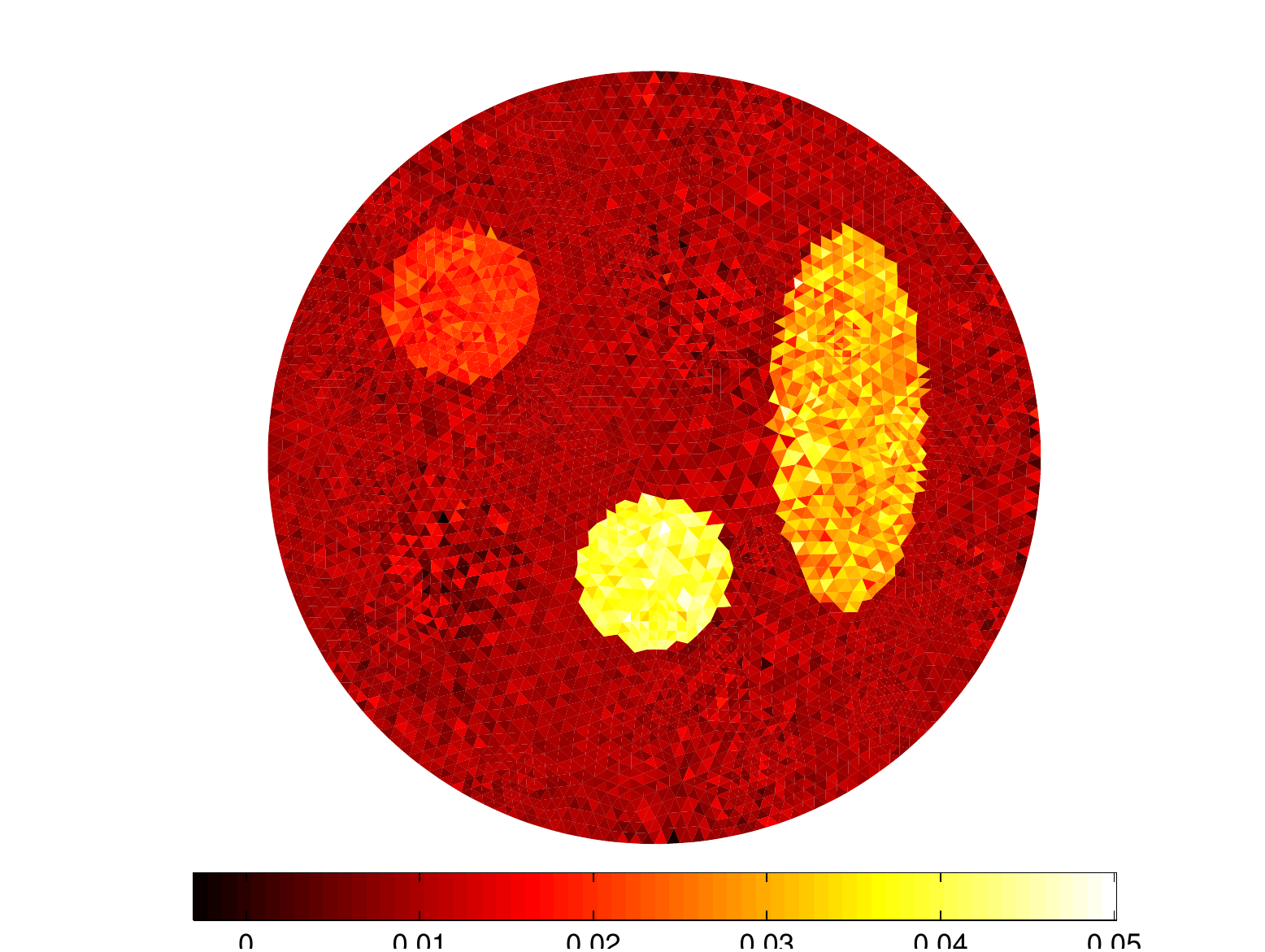}
\end{minipage}
\\
\begin{minipage}{0.24\linewidth}
\includegraphics[width=\textwidth]{./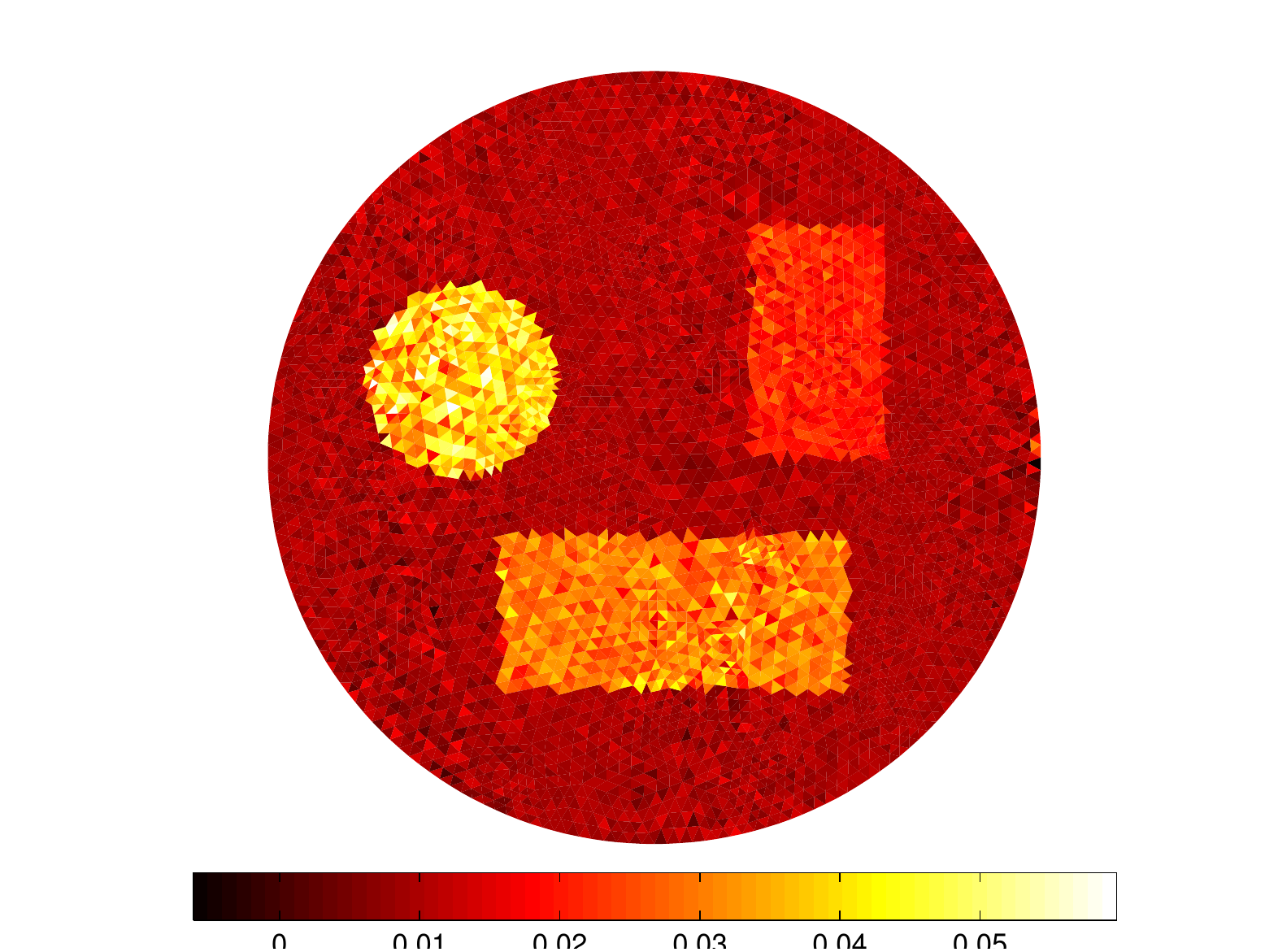}
\end{minipage}
\begin{minipage}{0.24\linewidth}
  \includegraphics[width=\textwidth]{./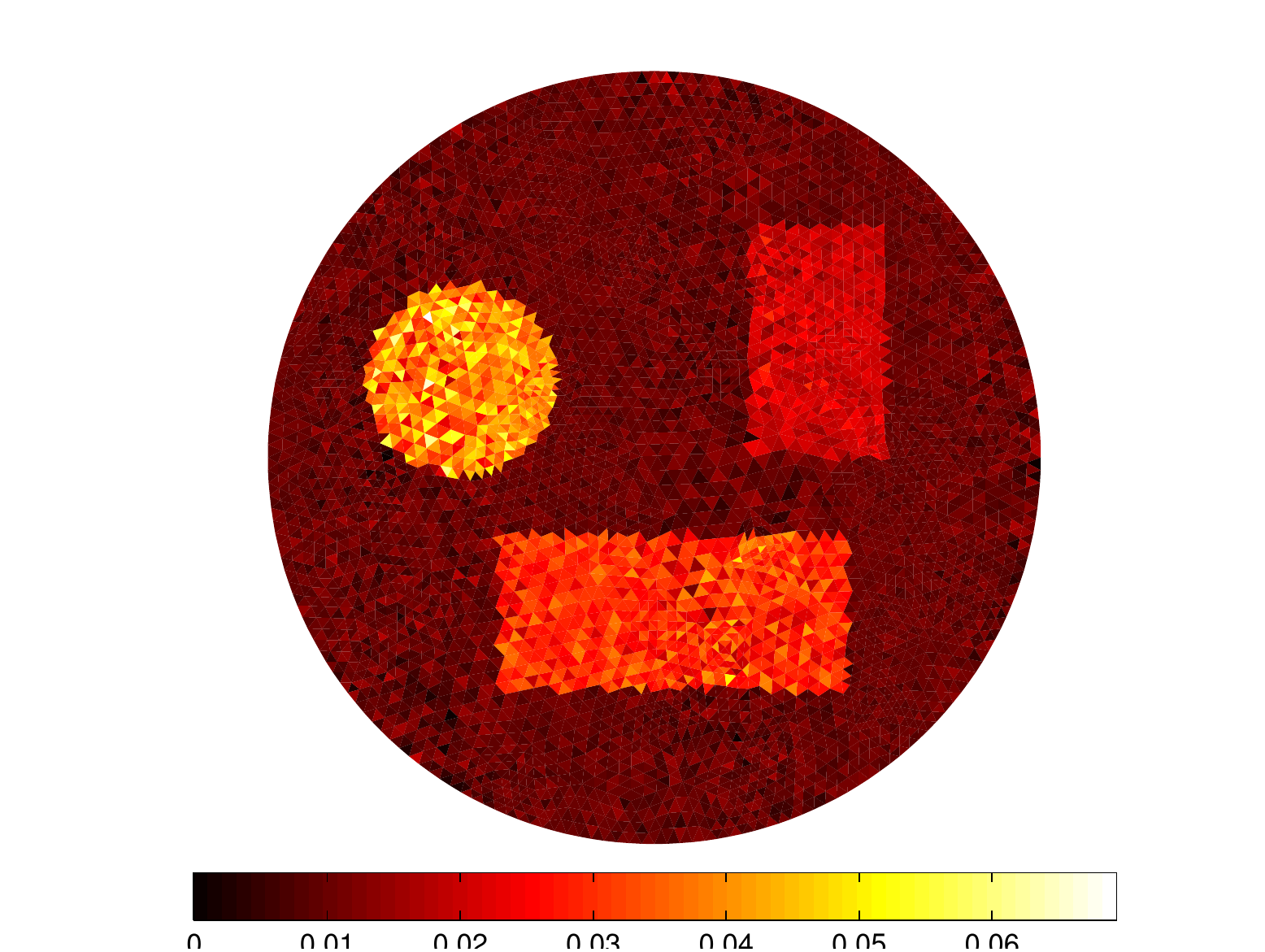}
\end{minipage}
\begin{minipage}{0.24\linewidth}
  \includegraphics[width=\textwidth]{./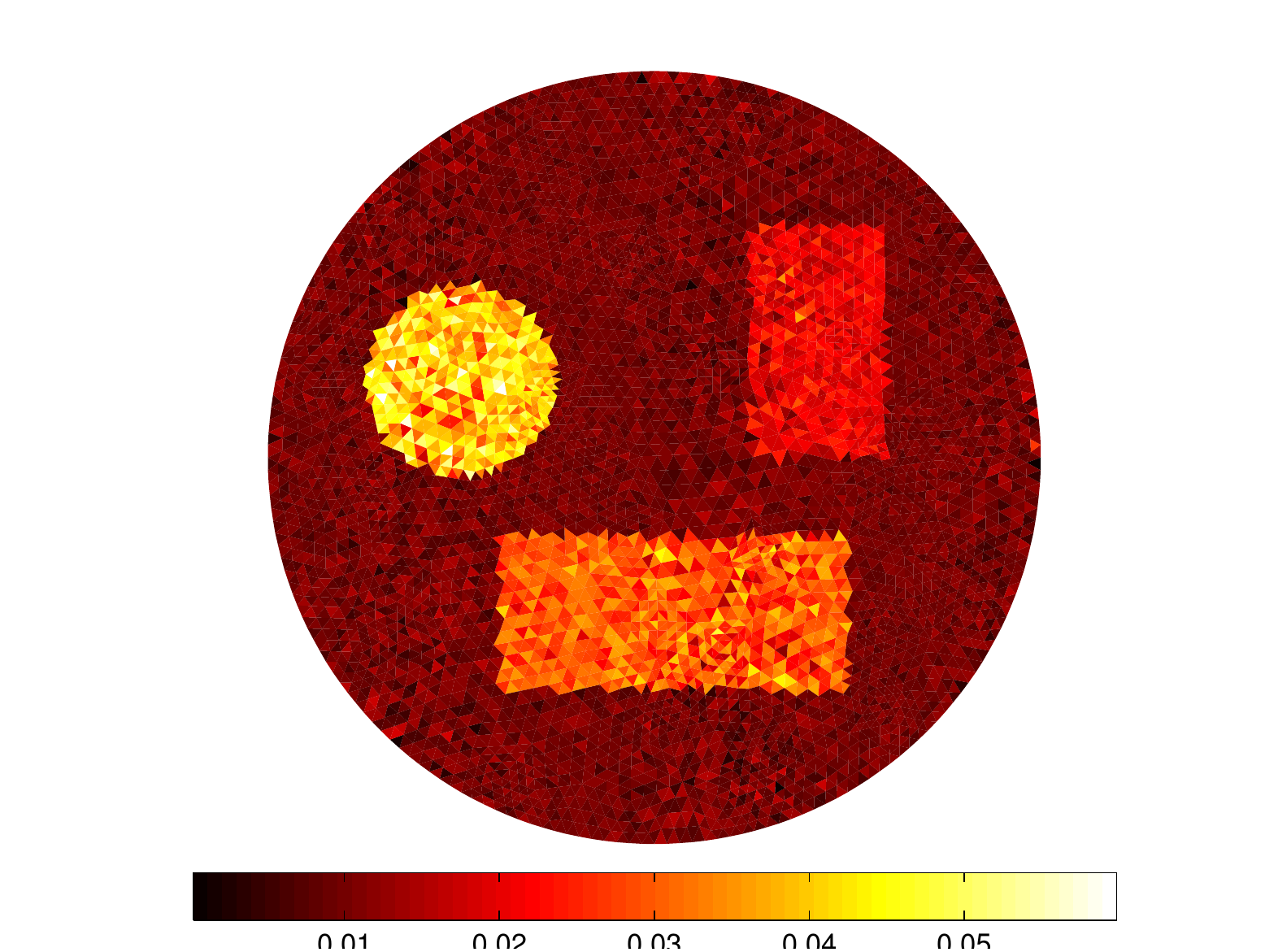}
\end{minipage}
\begin{minipage}{0.24\linewidth}
  \includegraphics[width=\textwidth]{./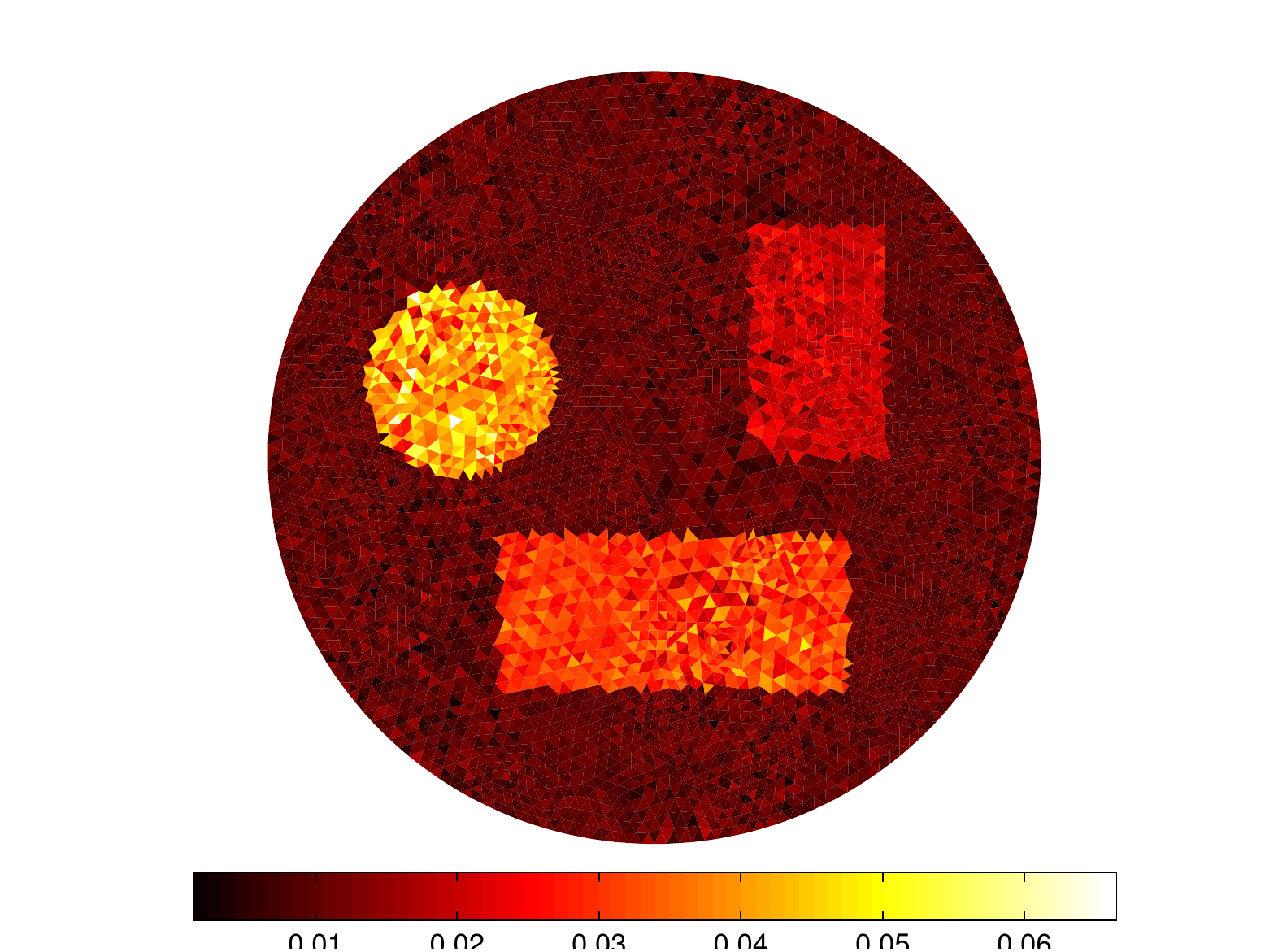}
\end{minipage}
\\
\begin{minipage}{0.24\linewidth}
\includegraphics[width=\textwidth]{./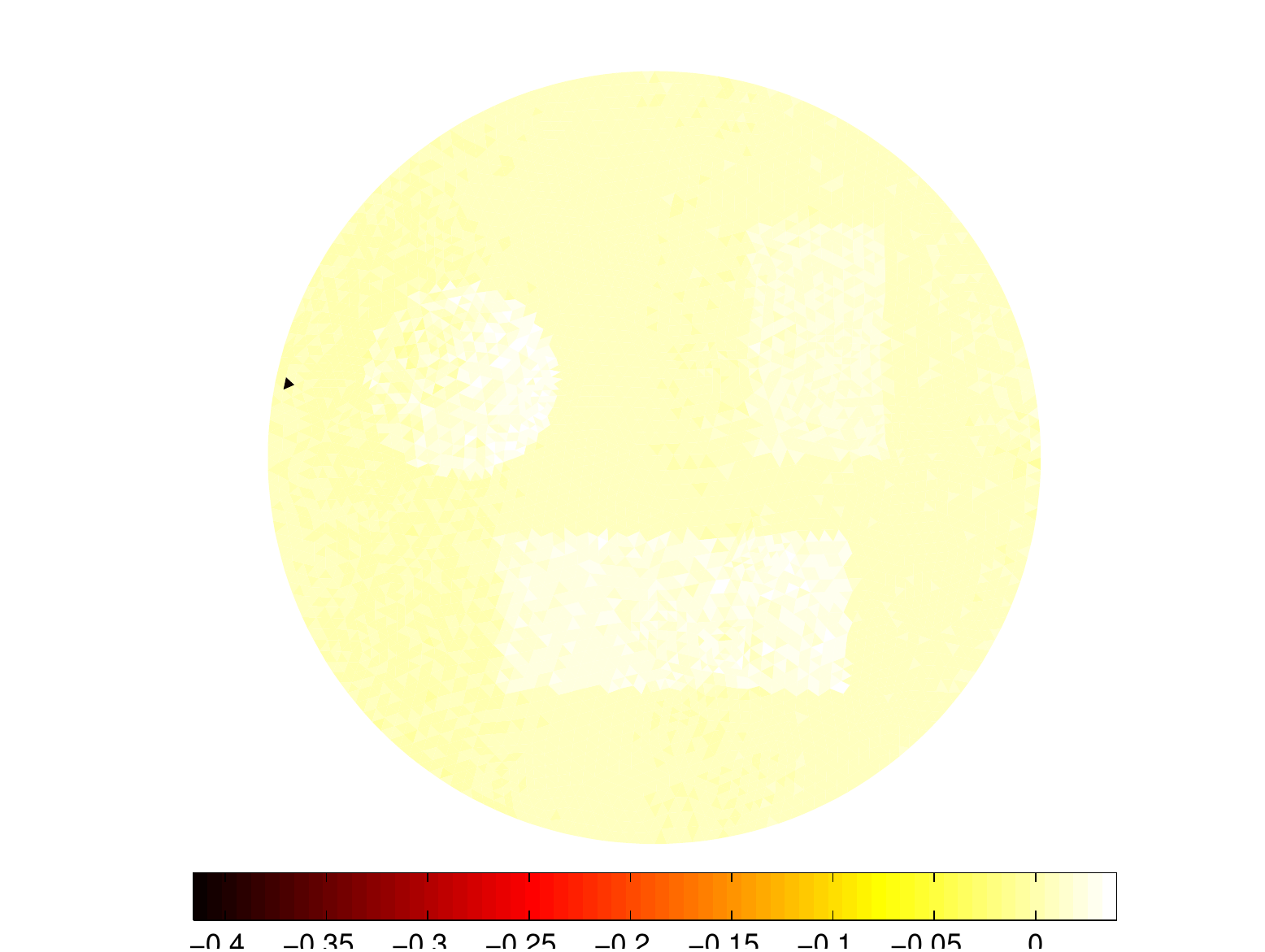}
\end{minipage}
\begin{minipage}{0.24\linewidth}
  \includegraphics[width=\textwidth]{./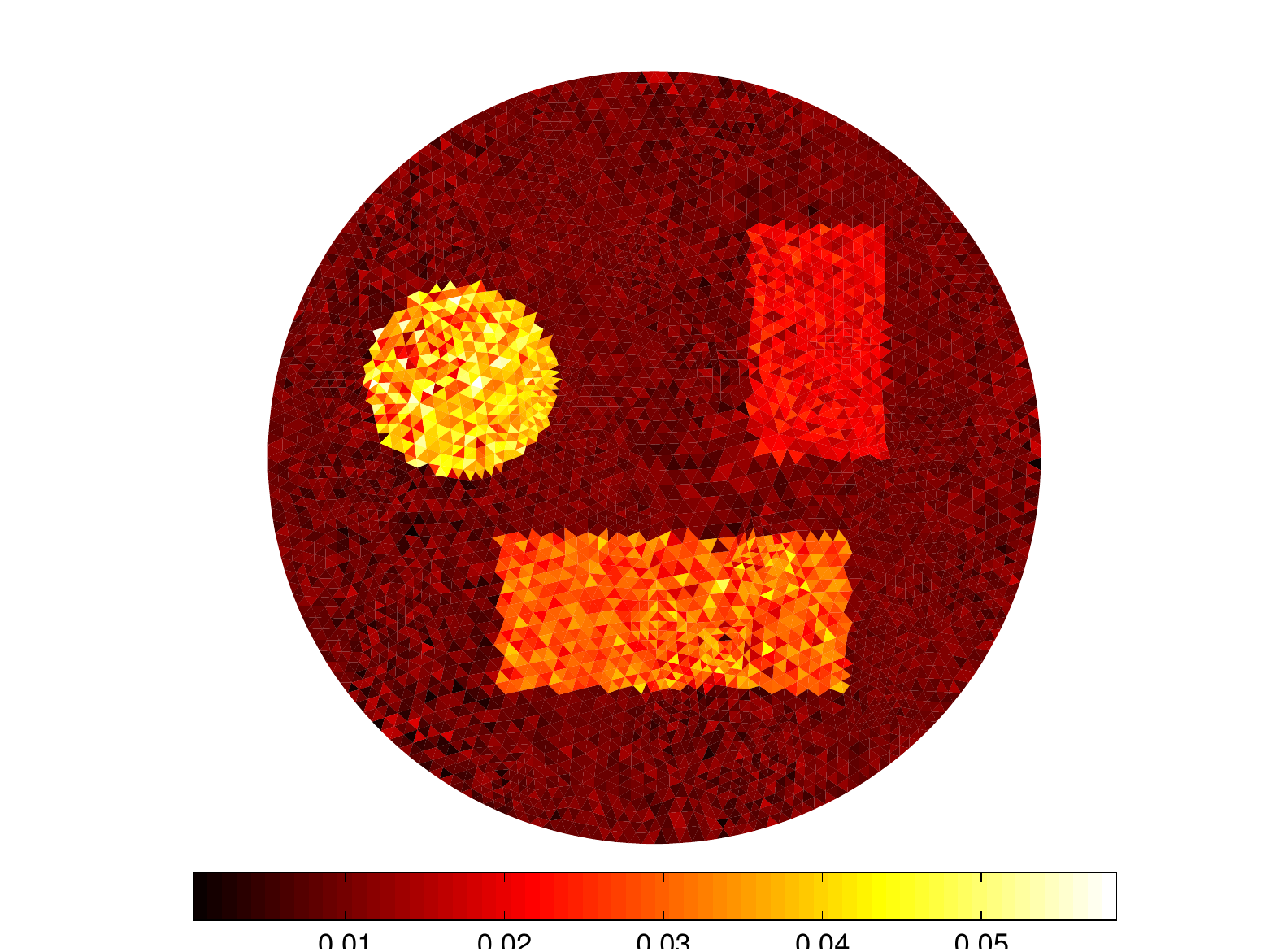}
\end{minipage}
\begin{minipage}{0.24\linewidth}
  \includegraphics[width=\textwidth]{./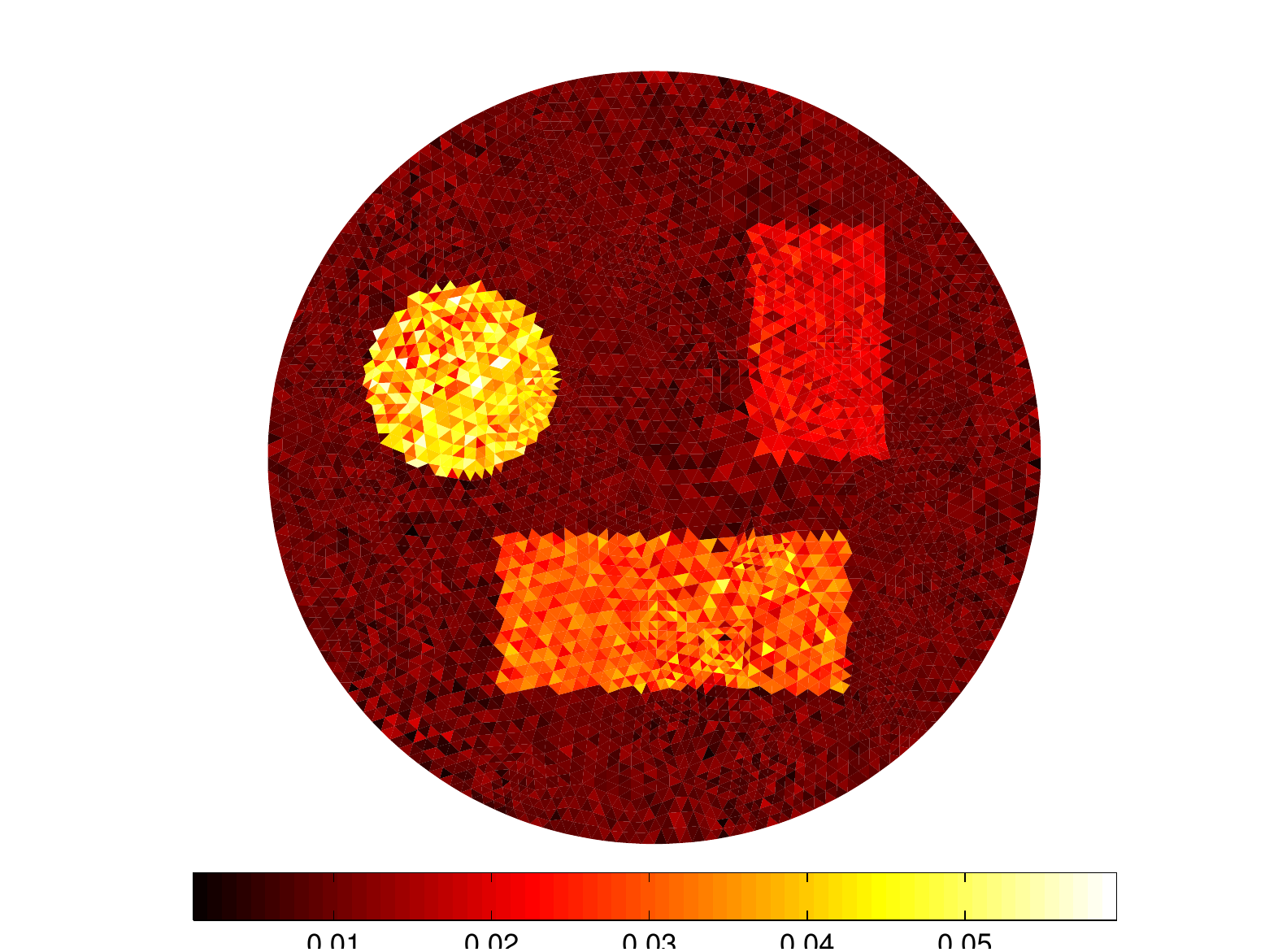}
\end{minipage}
\begin{minipage}{0.24\linewidth}
  \includegraphics[width=\textwidth]{./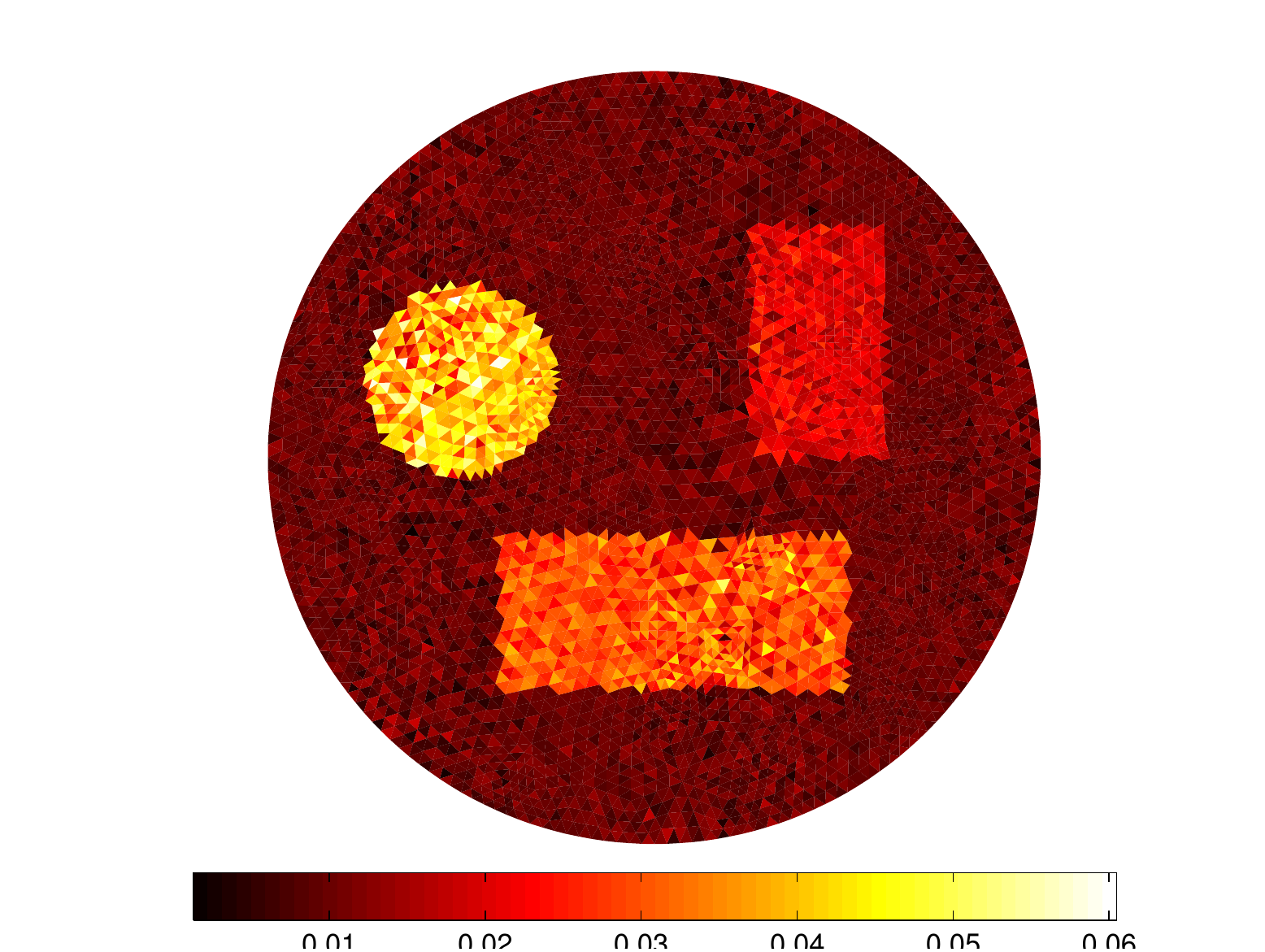}
\end{minipage}
 \caption{\label{fig:noise5}Reconstruction of $\mu_{a,xf}$ for 5$\%$ noise data by the hybrid method and the nonlinear optimization method. First, second row: first template. Third, fourth row: second template. First, third row: hybrid method. Second, fourth row: nonlinear optimization method. First, second, third, and fourth column: one-measurement, two-measurement, three-measurement and four-measurement.}
\end{figure}

\begin{figure}[htpb]
    \centering
\begin{minipage}{0.24\linewidth}
\includegraphics[width=\textwidth]{./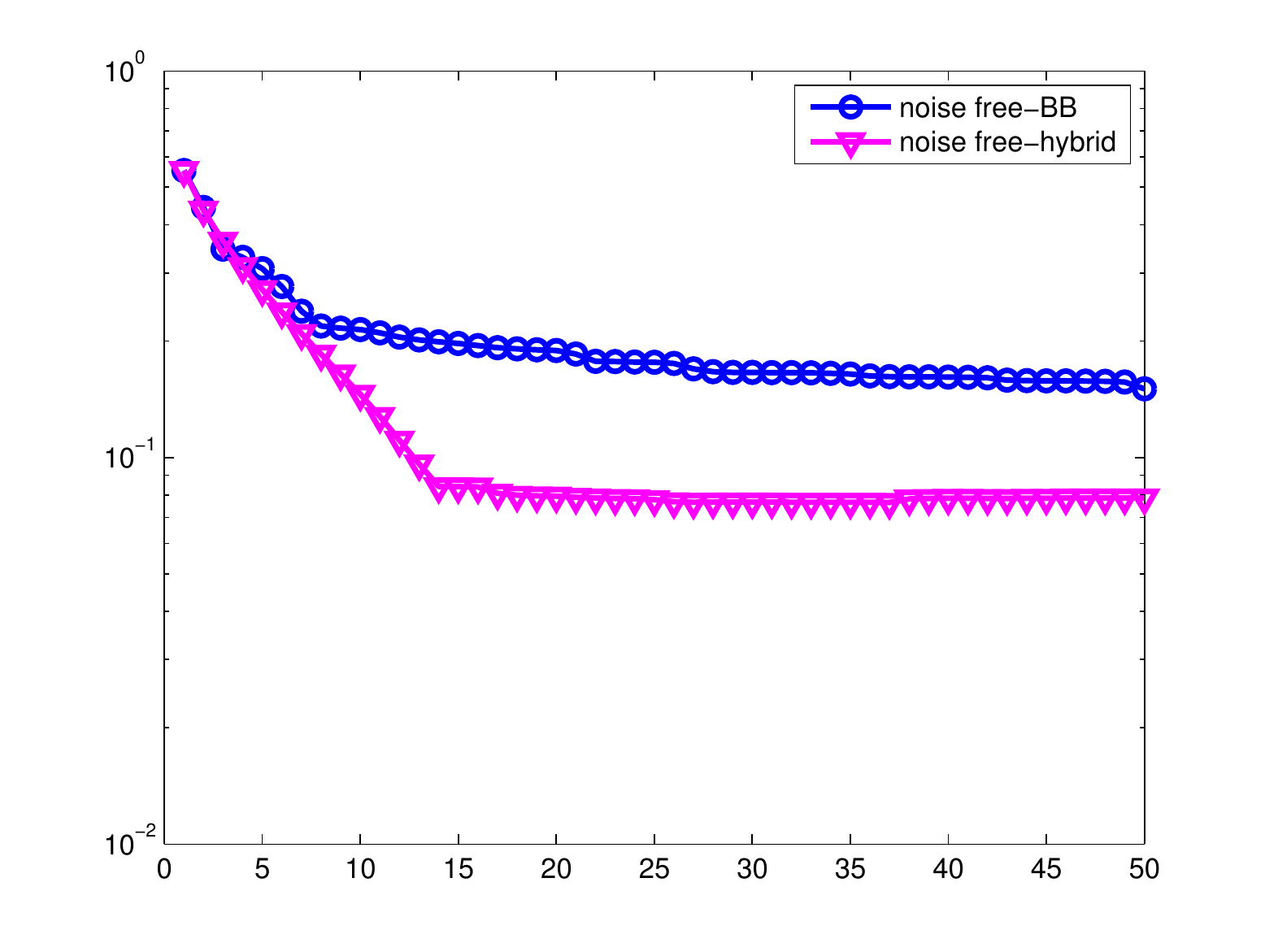}
\end{minipage}
\begin{minipage}{0.24\linewidth}
  \includegraphics[width=\textwidth]{./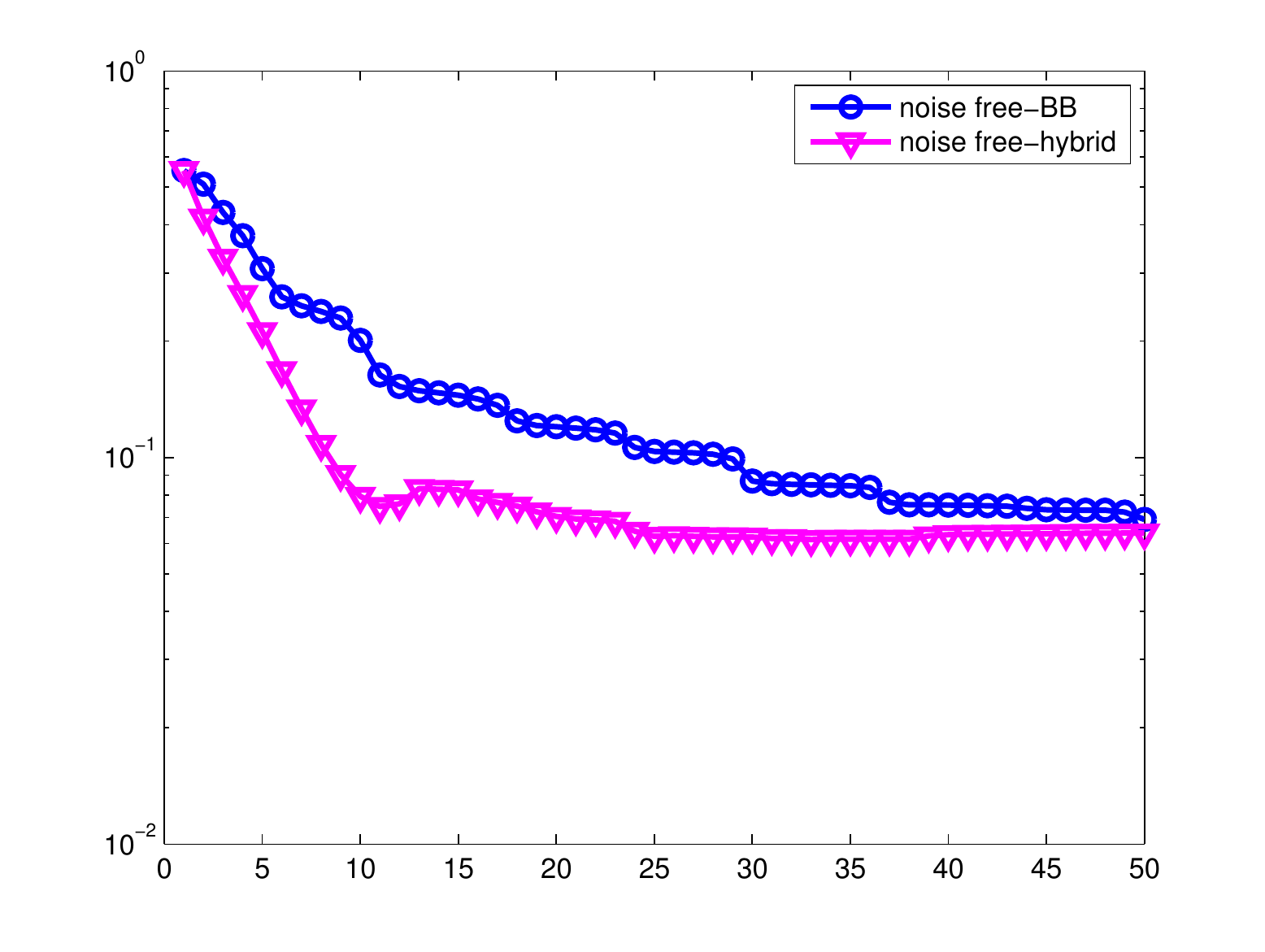}
\end{minipage}
\begin{minipage}{0.24\linewidth}
  \includegraphics[width=\textwidth]{./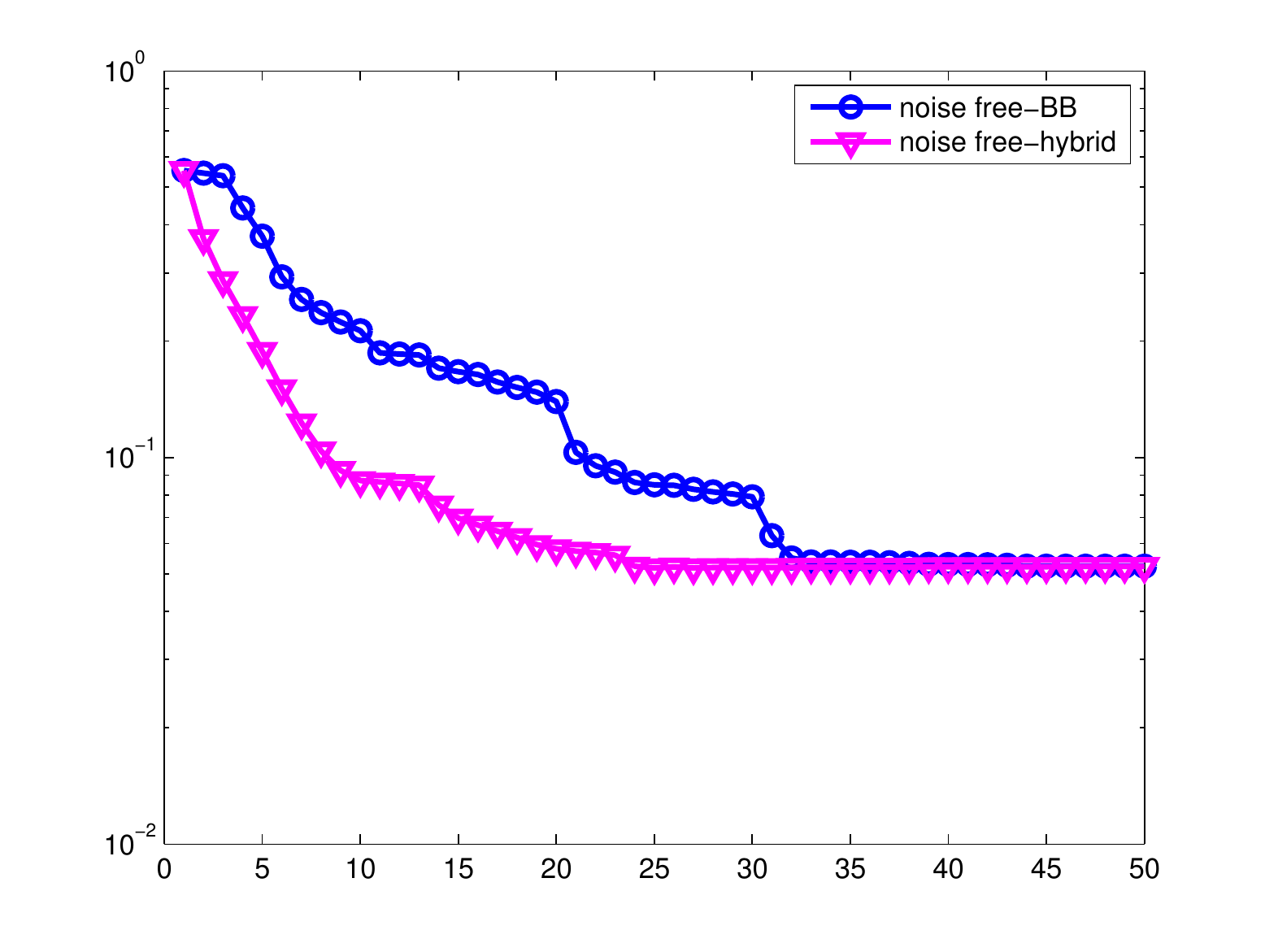}
\end{minipage}
\begin{minipage}{0.24\linewidth}
  \includegraphics[width=\textwidth]{./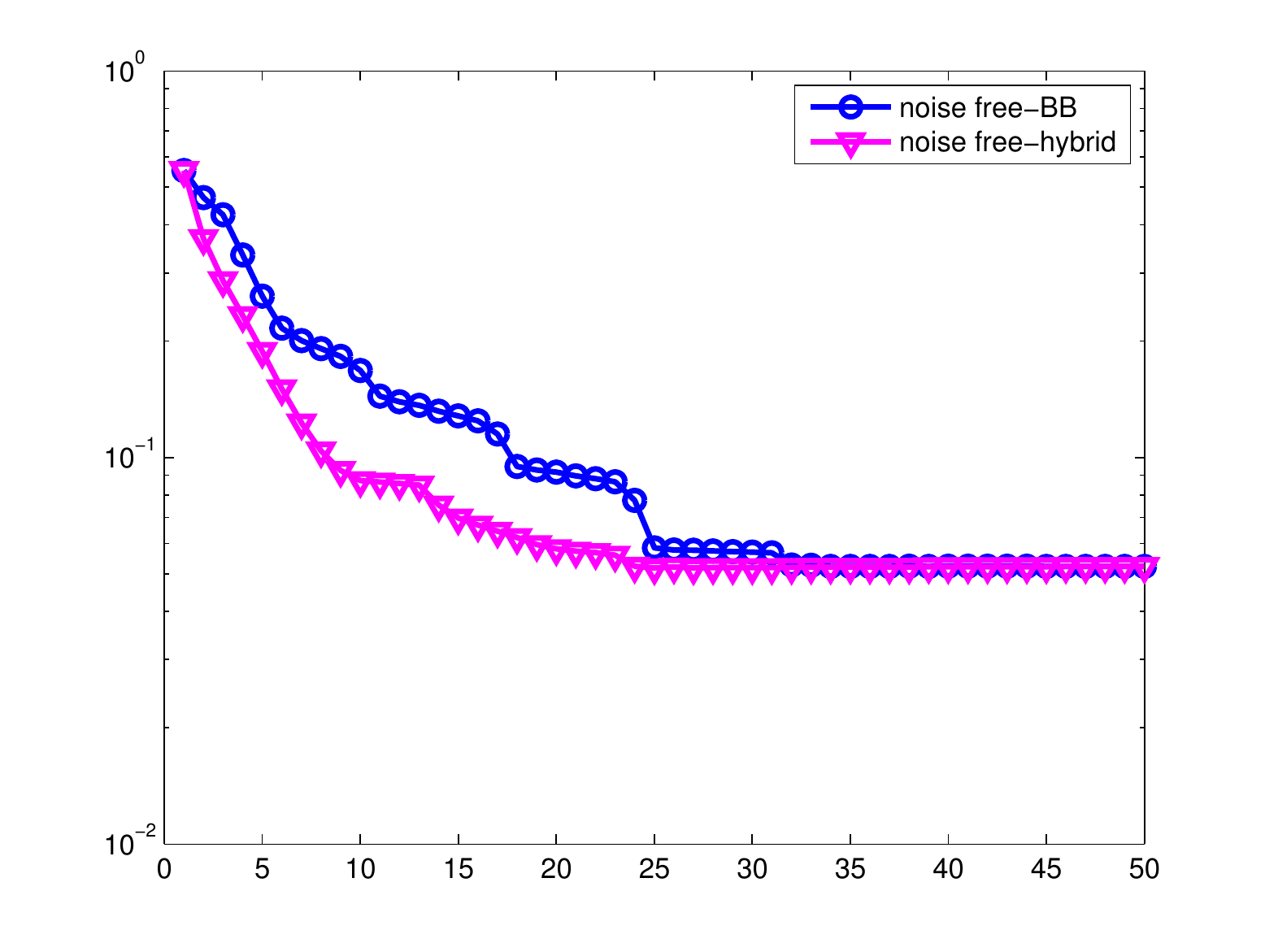}
\end{minipage}
\\
\begin{minipage}{0.24\linewidth}
\includegraphics[width=\textwidth]{./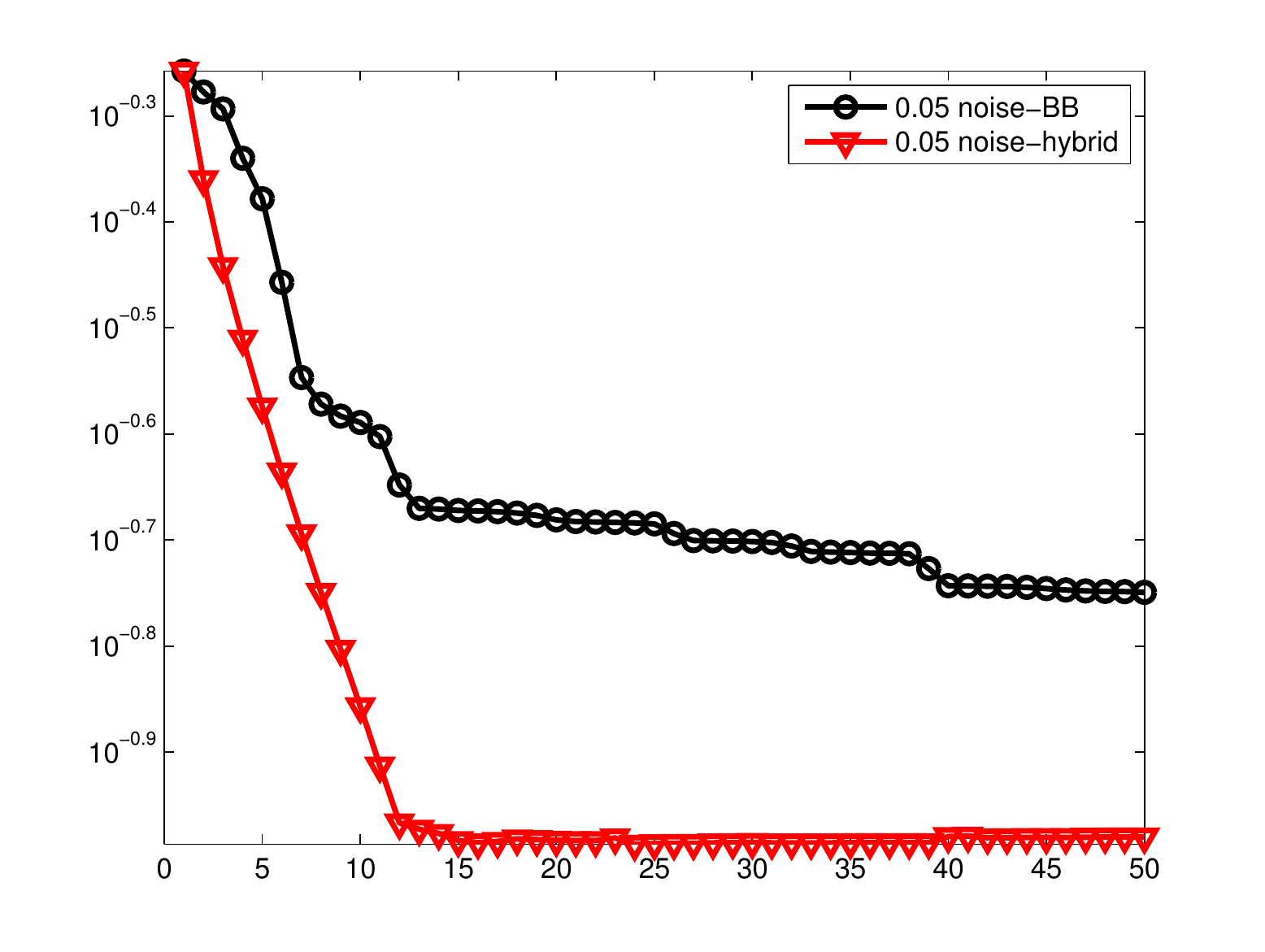}
\end{minipage}
\begin{minipage}{0.24\linewidth}
  \includegraphics[width=\textwidth]{./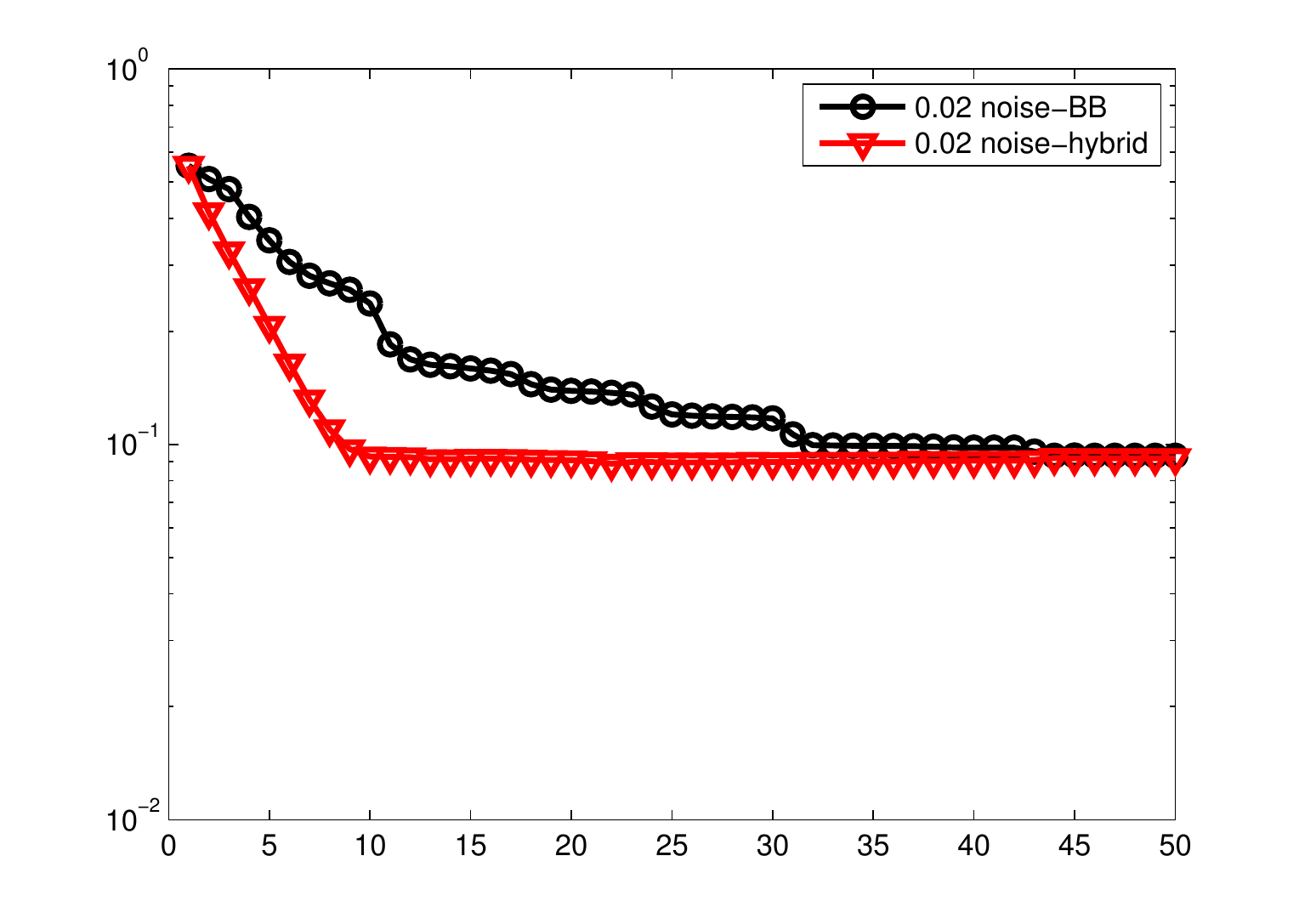}
\end{minipage}
\begin{minipage}{0.24\linewidth}
  \includegraphics[width=\textwidth]{./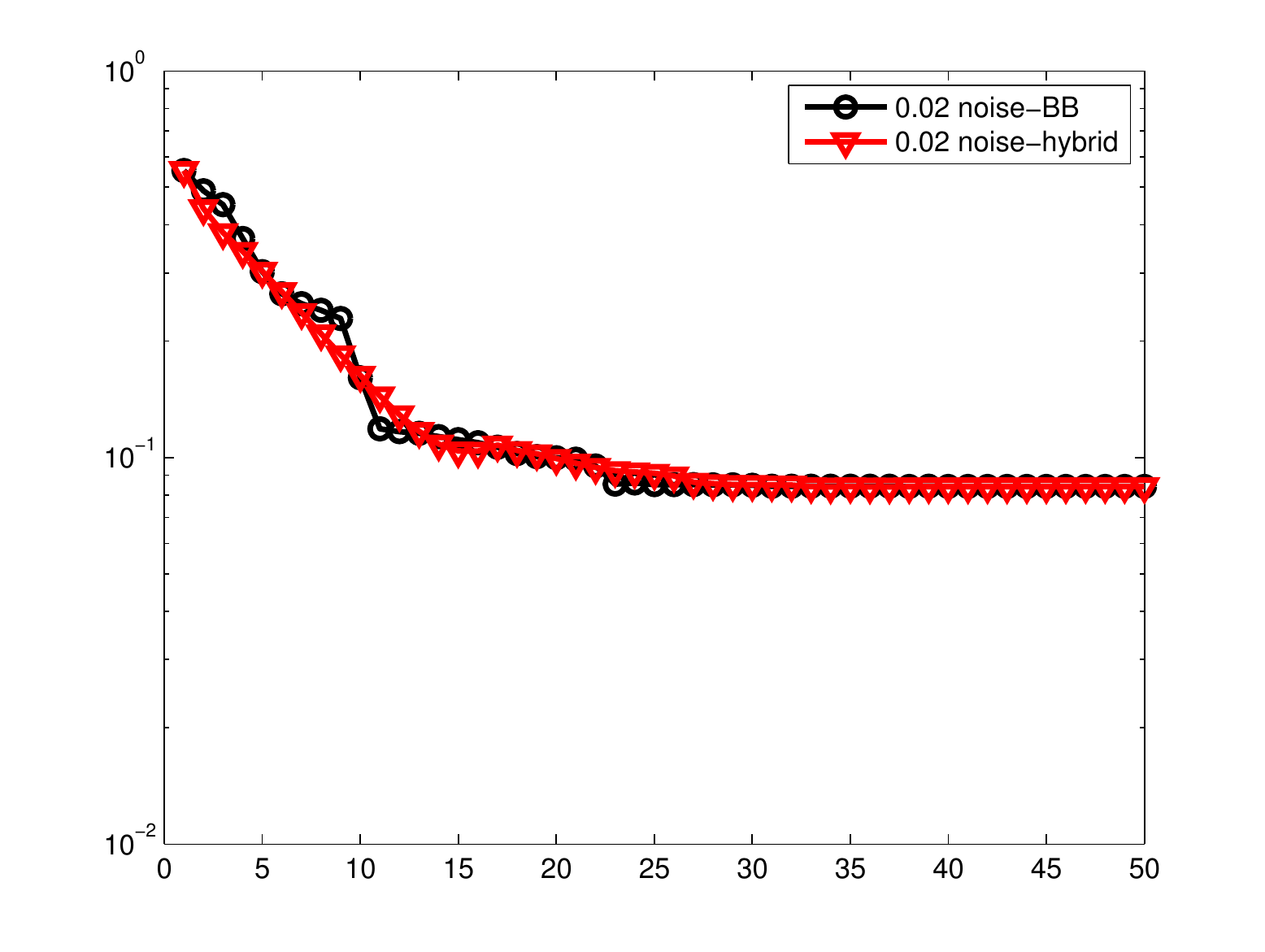}
\end{minipage}
\begin{minipage}{0.24\linewidth}
  \includegraphics[width=\textwidth]{./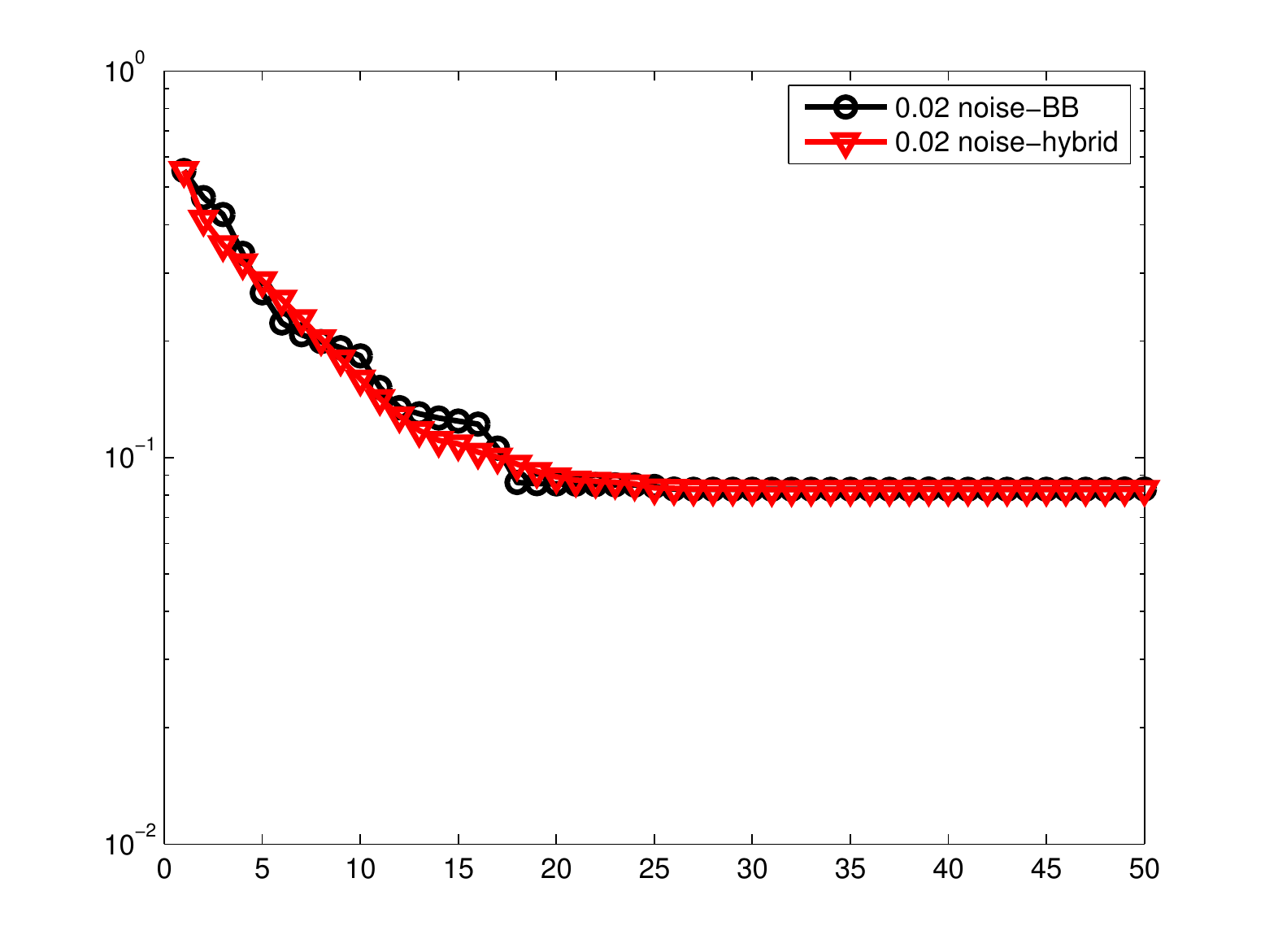}
\end{minipage}
\\
\begin{minipage}{0.24\linewidth}
\includegraphics[width=\textwidth]{./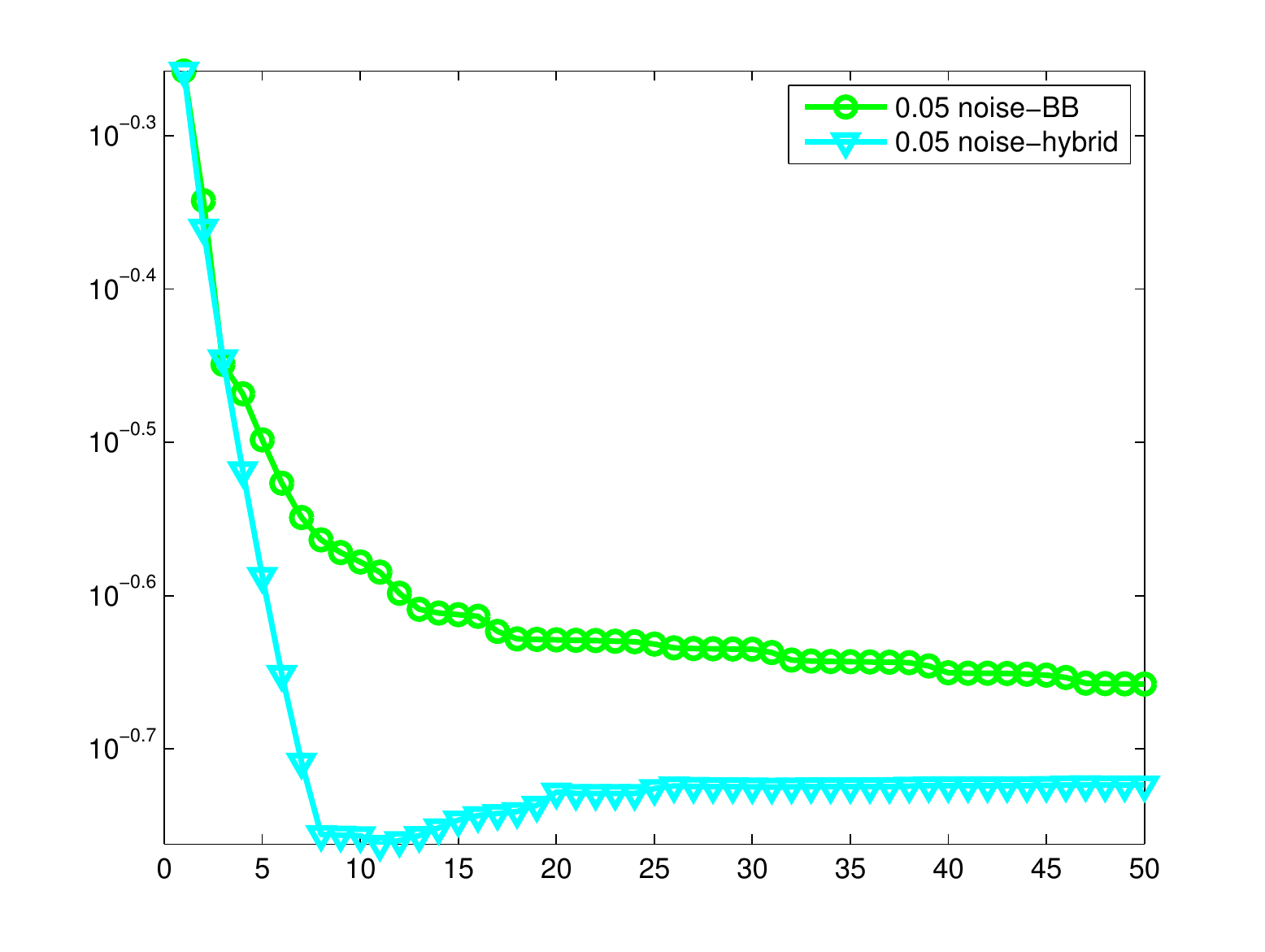}
\end{minipage}
\begin{minipage}{0.24\linewidth}
  \includegraphics[width=\textwidth]{./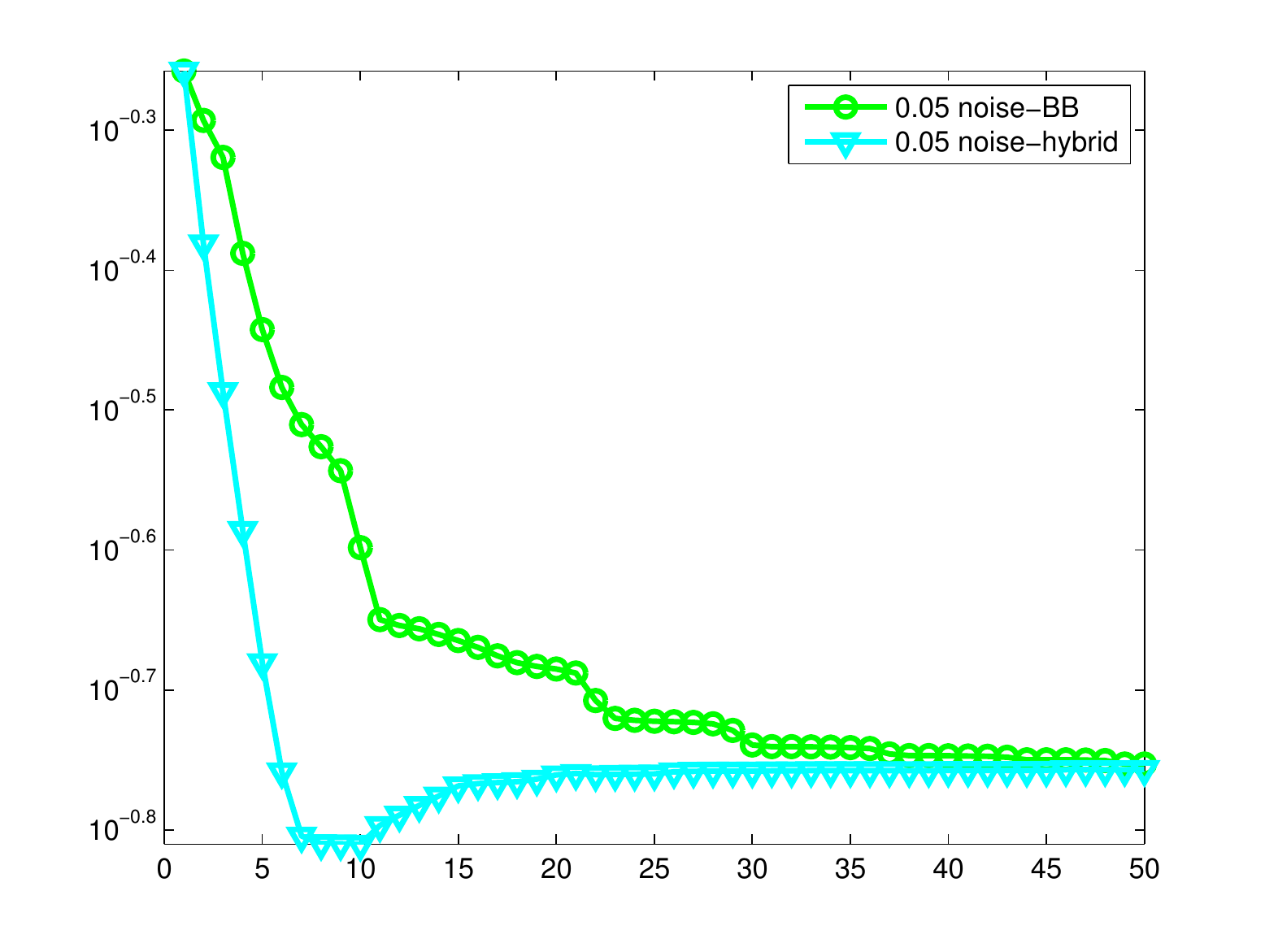}
\end{minipage}
\begin{minipage}{0.24\linewidth}
  \includegraphics[width=\textwidth]{./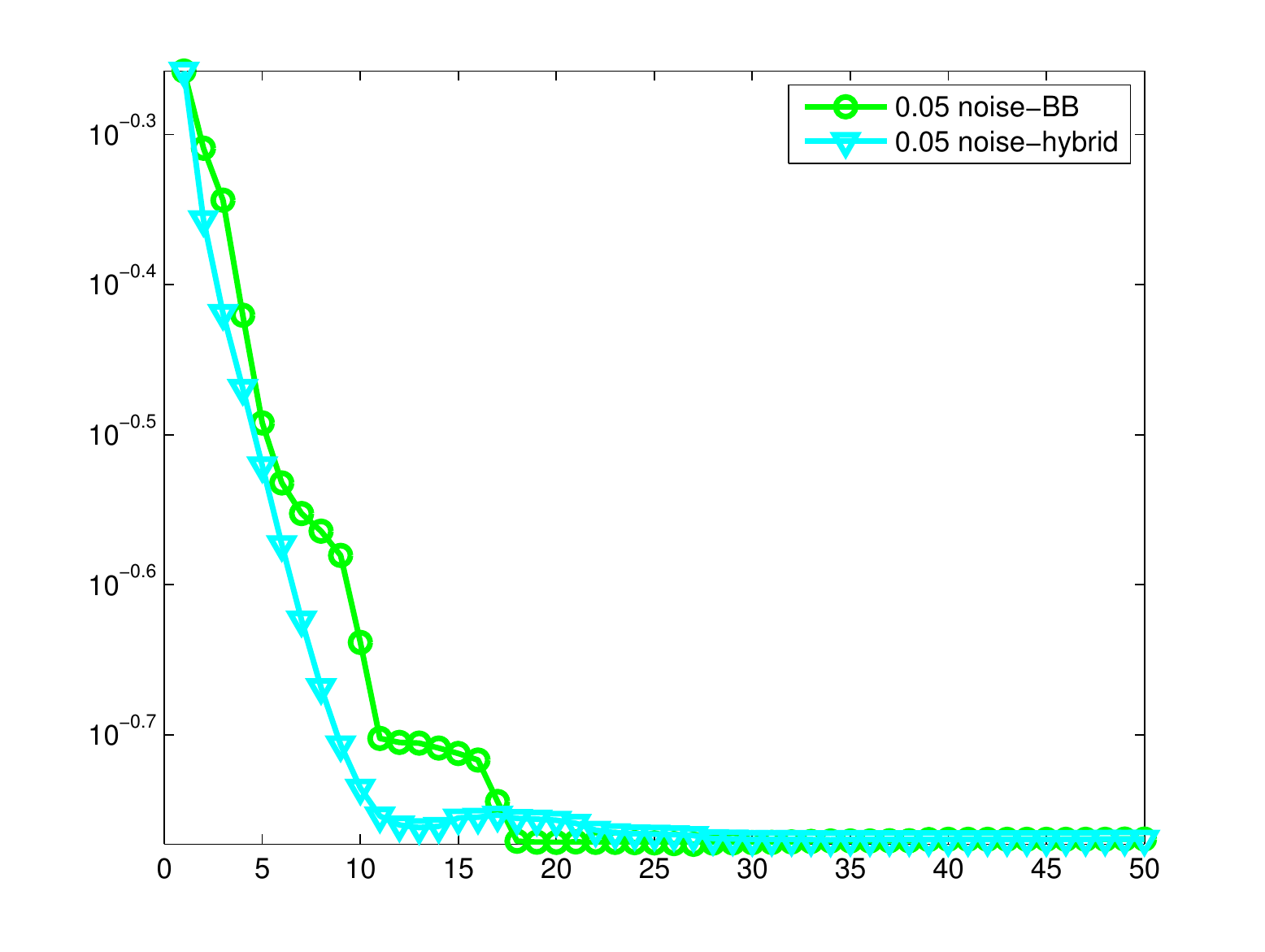}
\end{minipage}
\begin{minipage}{0.24\linewidth}
  \includegraphics[width=\textwidth]{./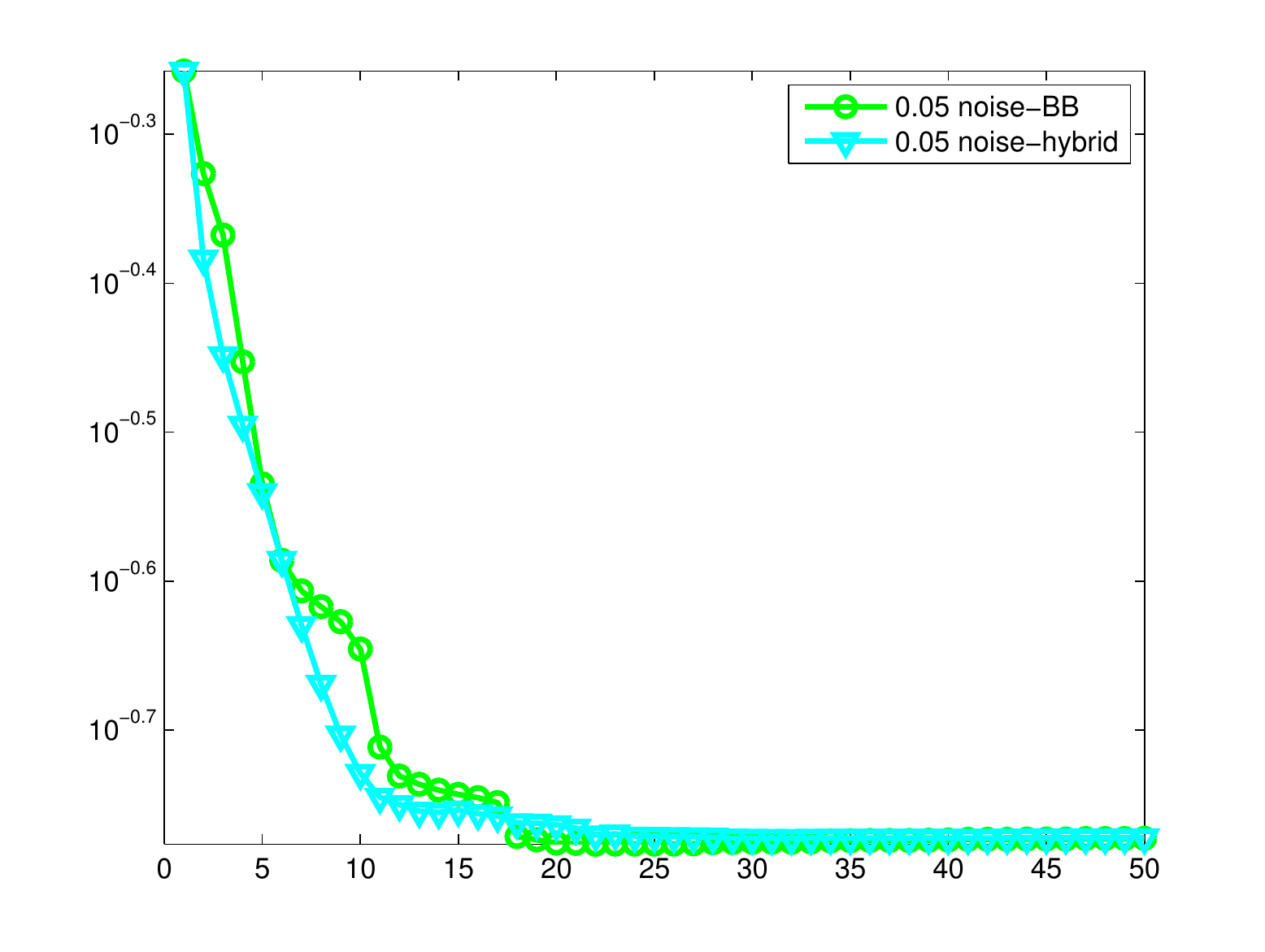}
\end{minipage}
 \caption{\label{fig:template 1}Comparison of the hybrid method and the nonlinear optimization method by relative error $\epsilon_f$ of reconstructed $\mu_{a,xf}$ for first template. First, second and third row: noise-free, 2$\%$ noise, and 5$\%$ noise data. First, second, third, and fourth column: one-measurement, two-measurement, three-measurement and four-measurement.}
\end{figure}

\begin{figure}[htpb]
    \centering
\begin{minipage}{0.24\linewidth}
\includegraphics[width=\textwidth]{./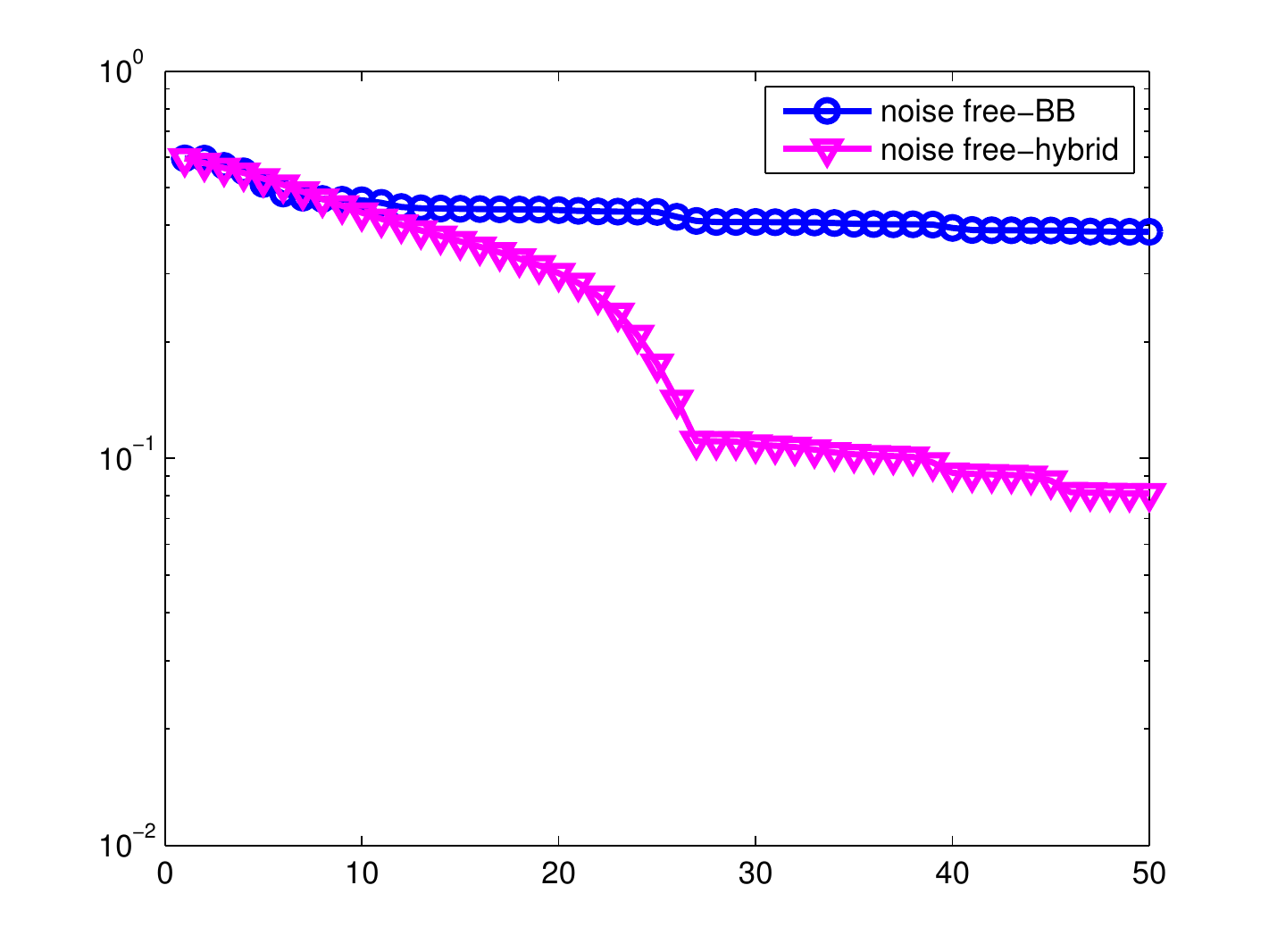}
\end{minipage}
\begin{minipage}{0.24\linewidth}
  \includegraphics[width=\textwidth]{./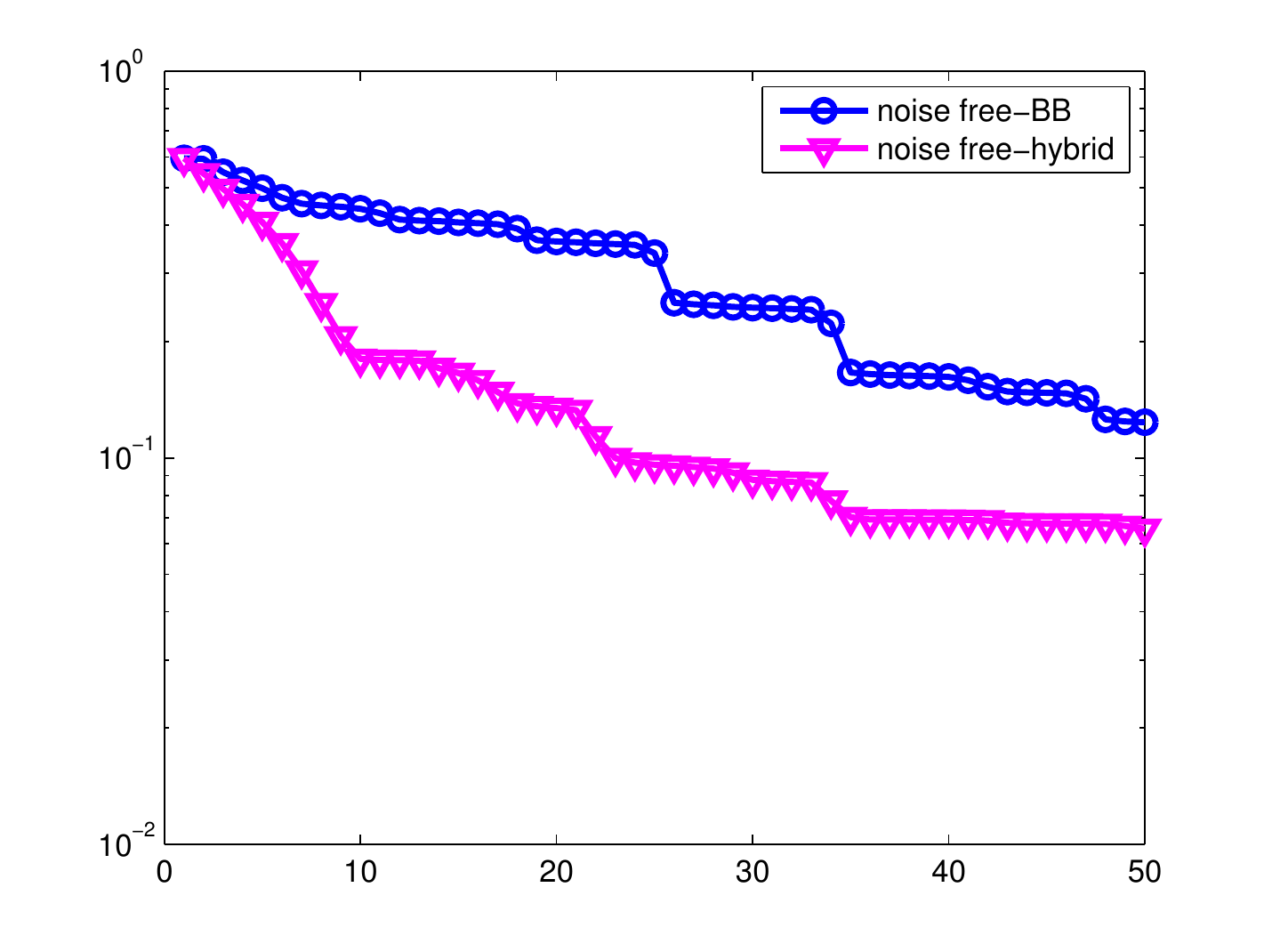}
\end{minipage}
\begin{minipage}{0.24\linewidth}
  \includegraphics[width=\textwidth]{./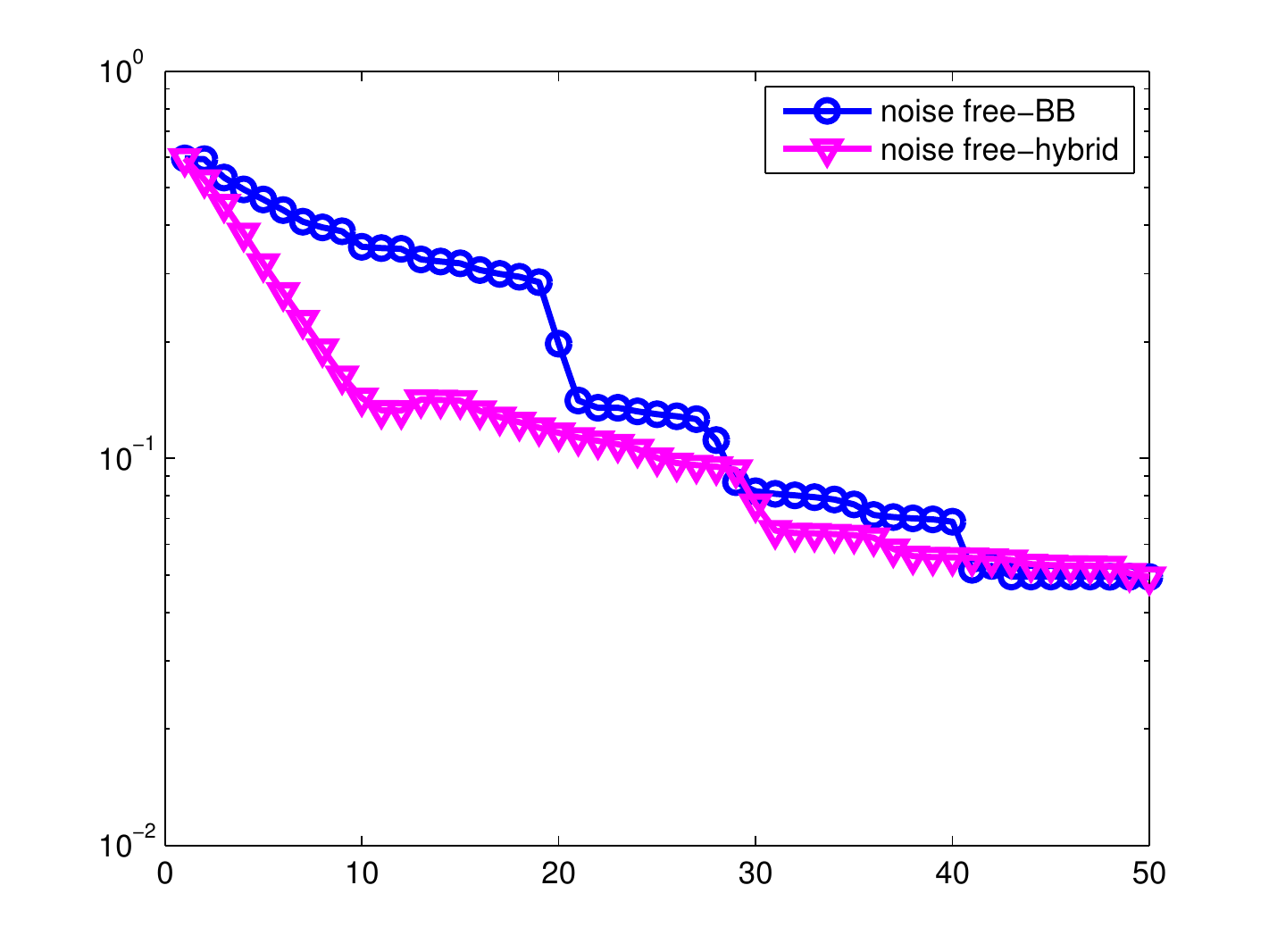}
\end{minipage}
\begin{minipage}{0.24\linewidth}
  \includegraphics[width=\textwidth]{./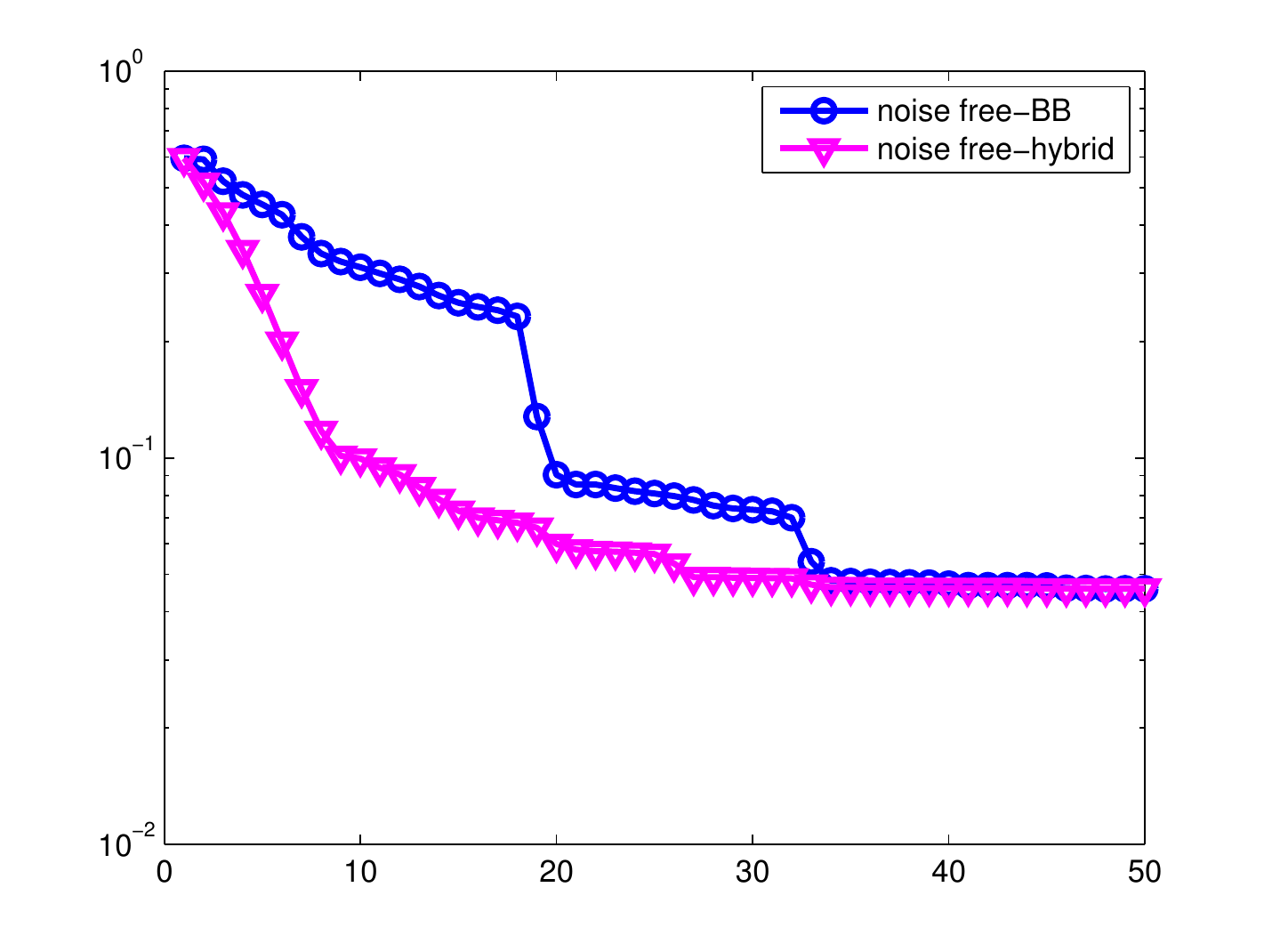}
\end{minipage}
\\
\begin{minipage}{0.24\linewidth}
\includegraphics[width=\textwidth]{./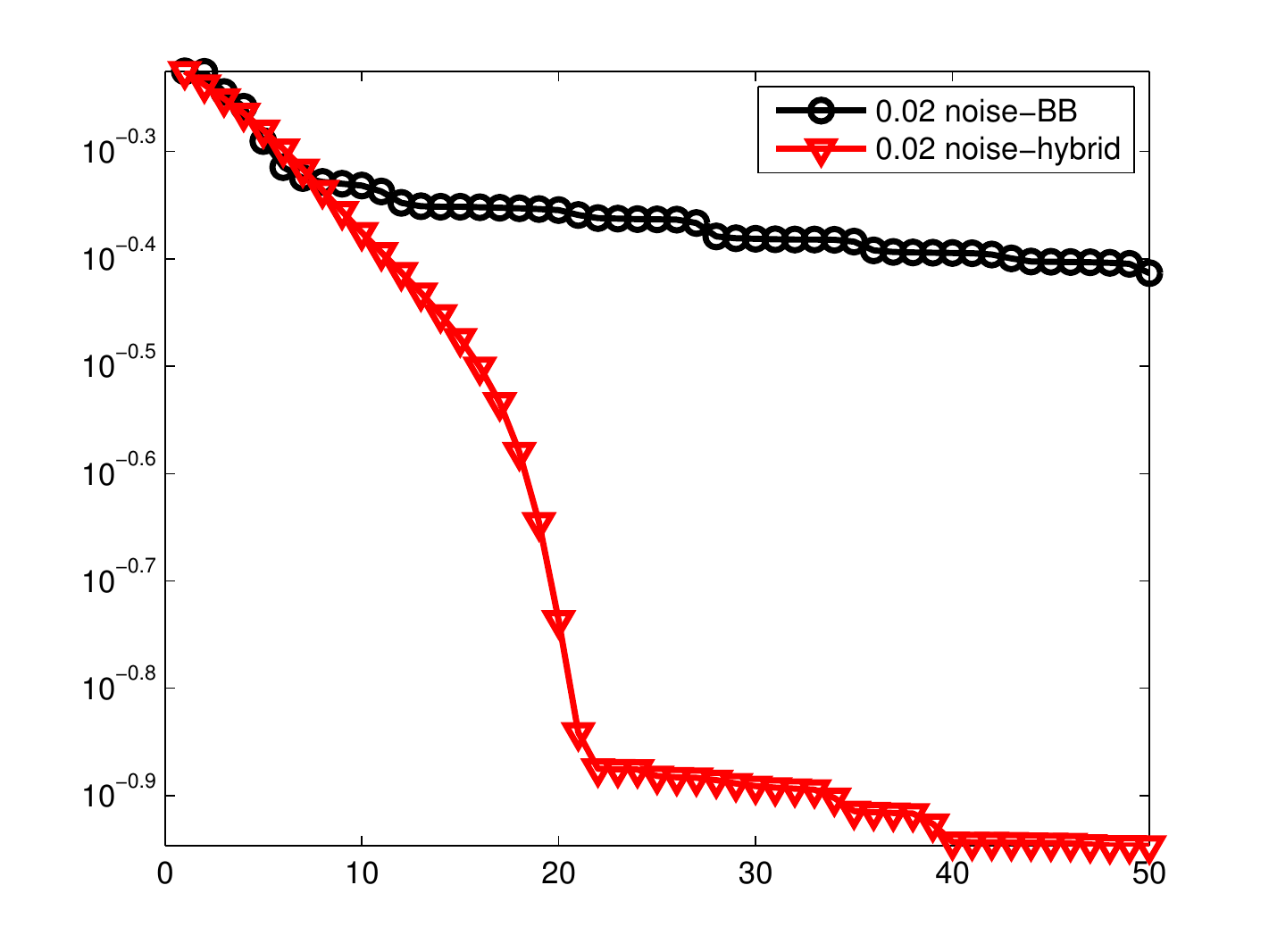}
\end{minipage}
\begin{minipage}{0.24\linewidth}
  \includegraphics[width=\textwidth]{./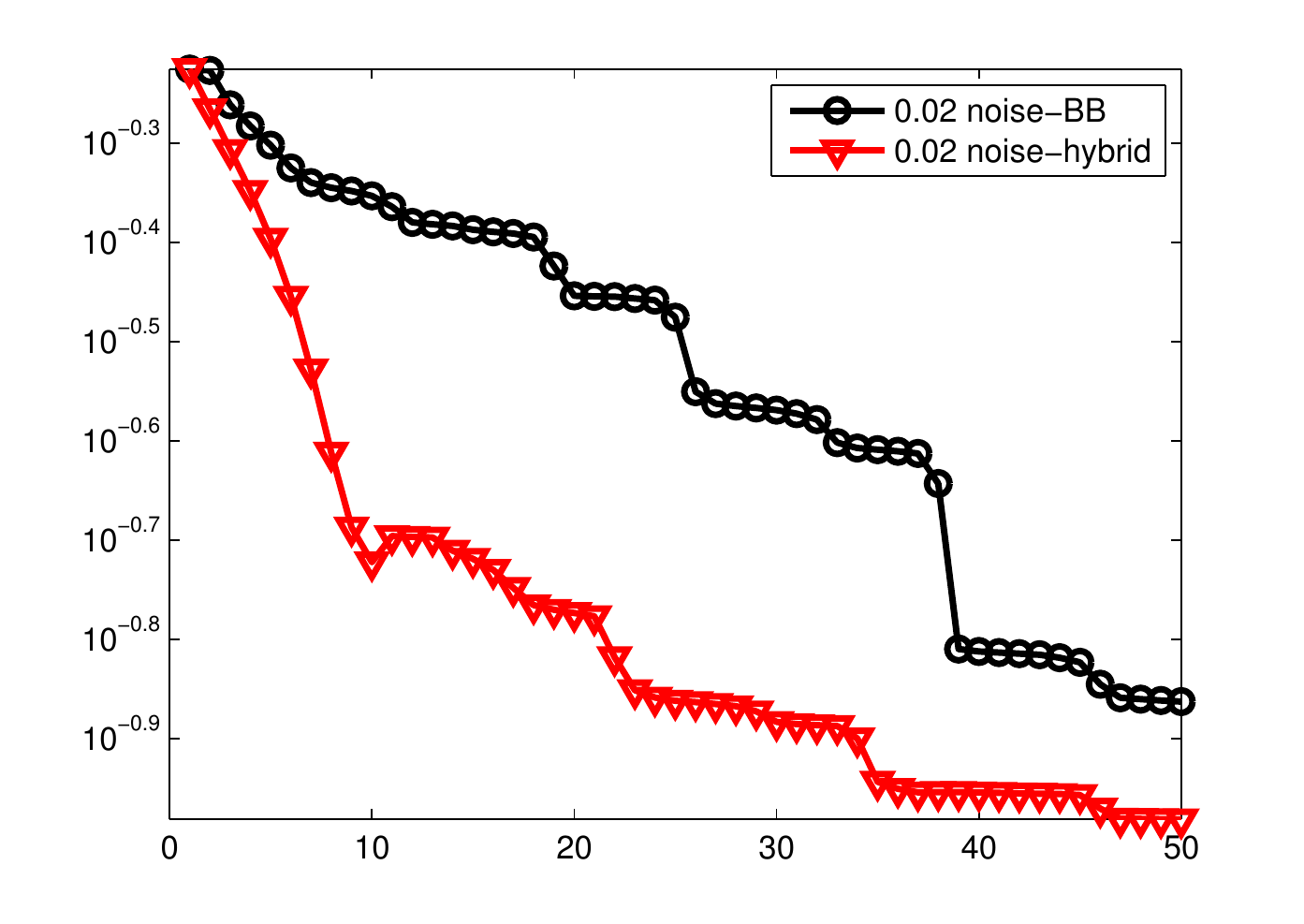}
\end{minipage}
\begin{minipage}{0.24\linewidth}
  \includegraphics[width=\textwidth]{./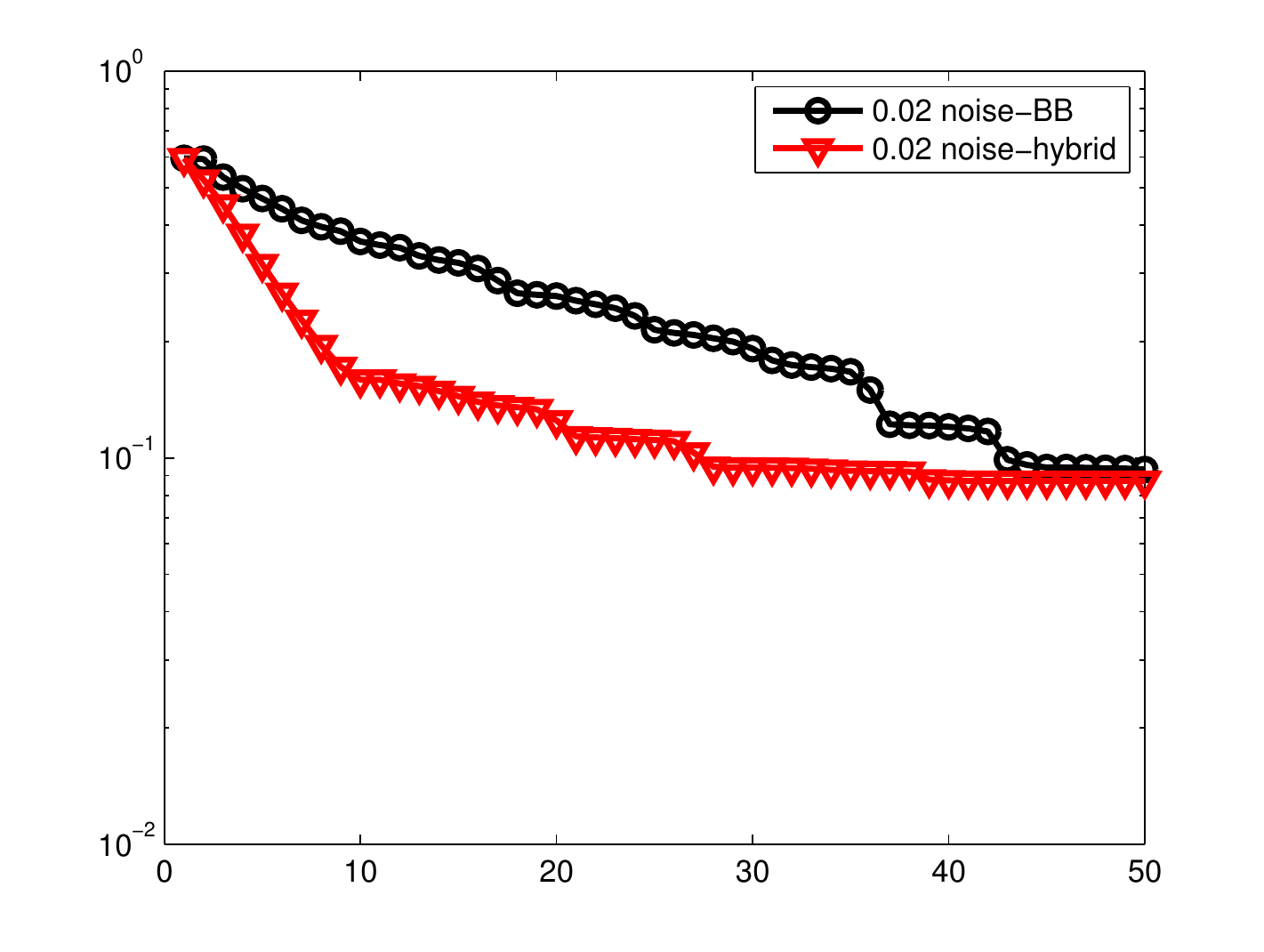}
\end{minipage}
\begin{minipage}{0.24\linewidth}
  \includegraphics[width=\textwidth]{./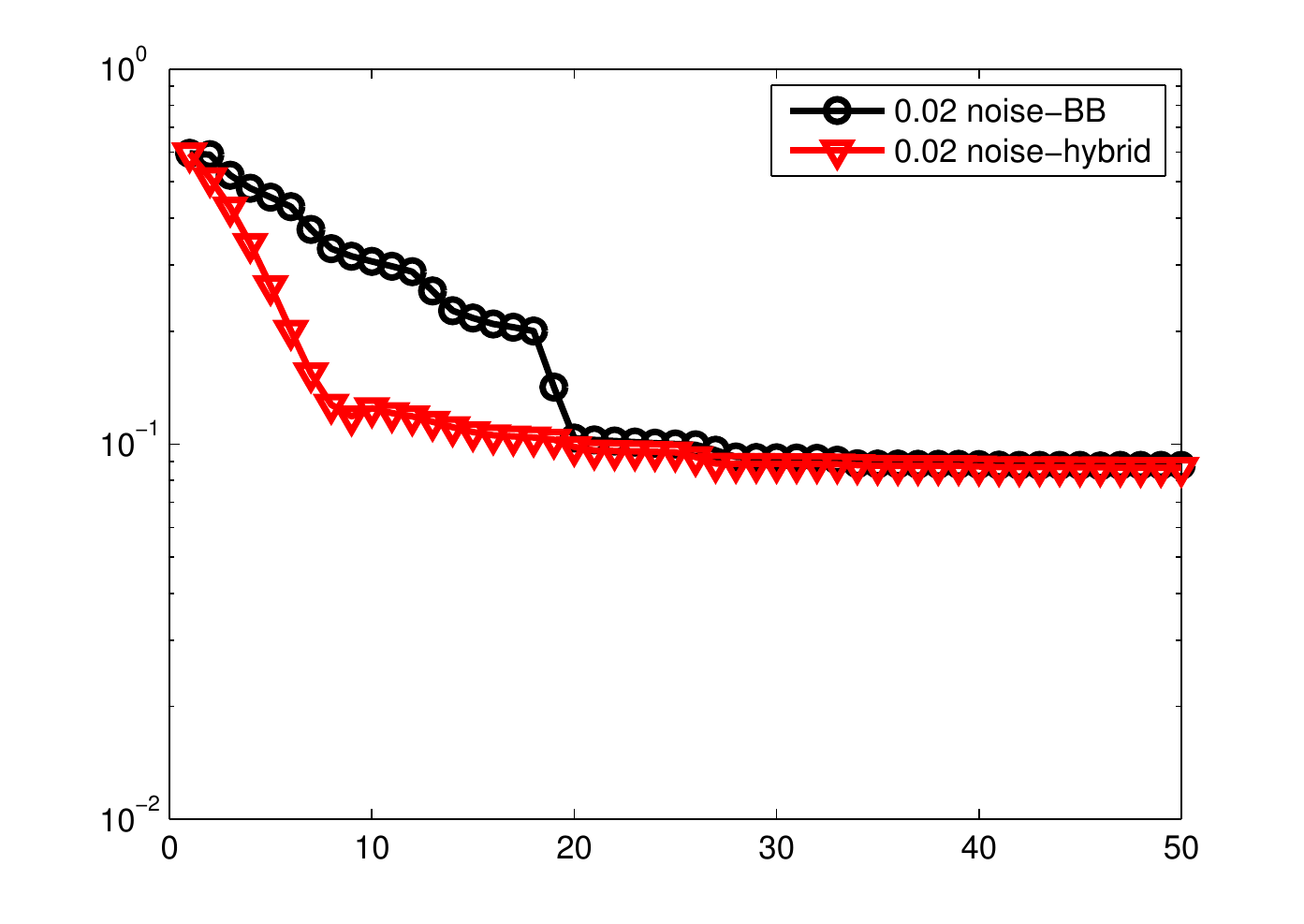}
\end{minipage}
\\
\begin{minipage}{0.24\linewidth}
\includegraphics[width=\textwidth]{./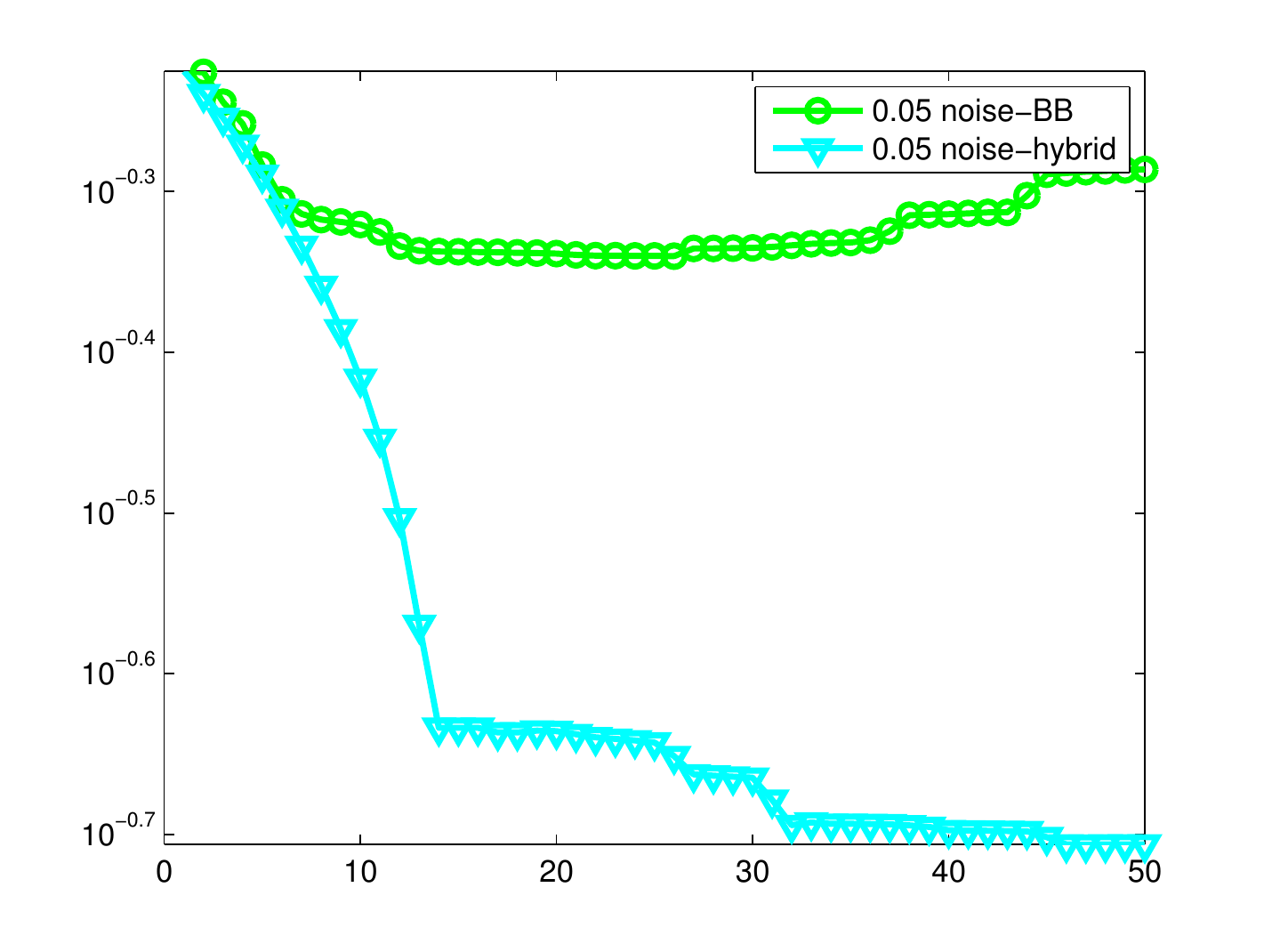}
\end{minipage}
\begin{minipage}{0.24\linewidth}
  \includegraphics[width=\textwidth]{./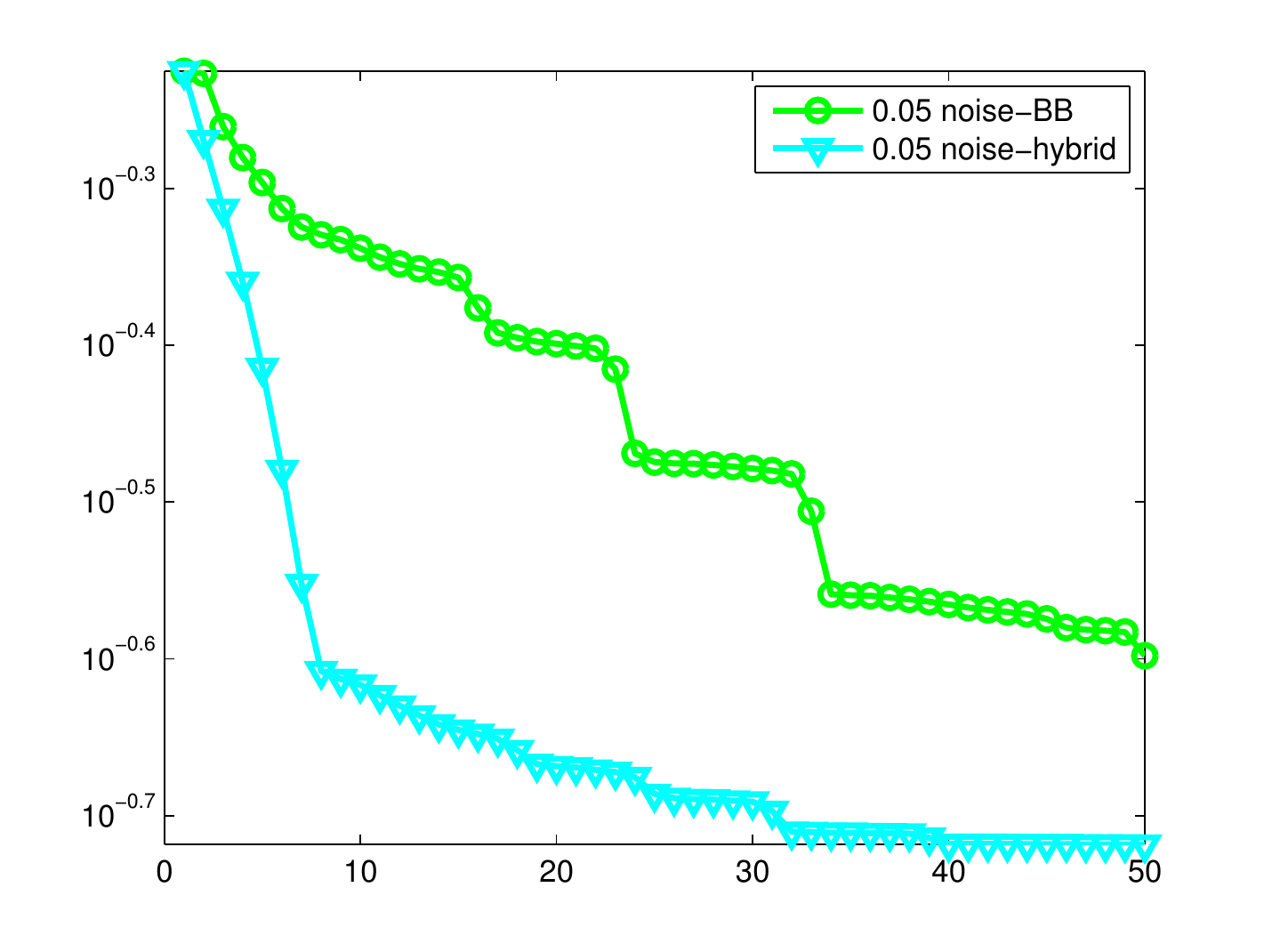}
\end{minipage}
\begin{minipage}{0.24\linewidth}
  \includegraphics[width=\textwidth]{./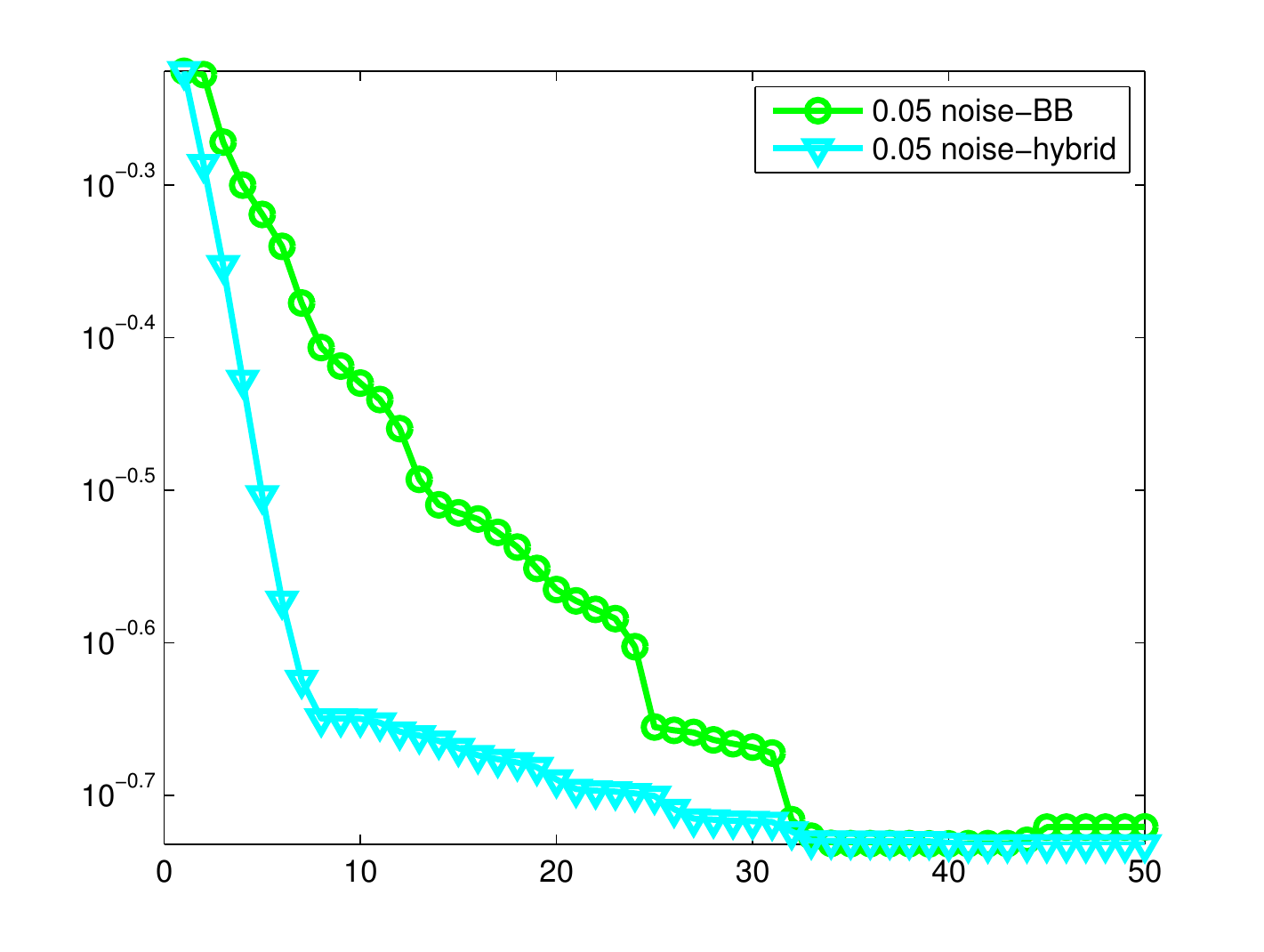}
\end{minipage}
\begin{minipage}{0.24\linewidth}
  \includegraphics[width=\textwidth]{./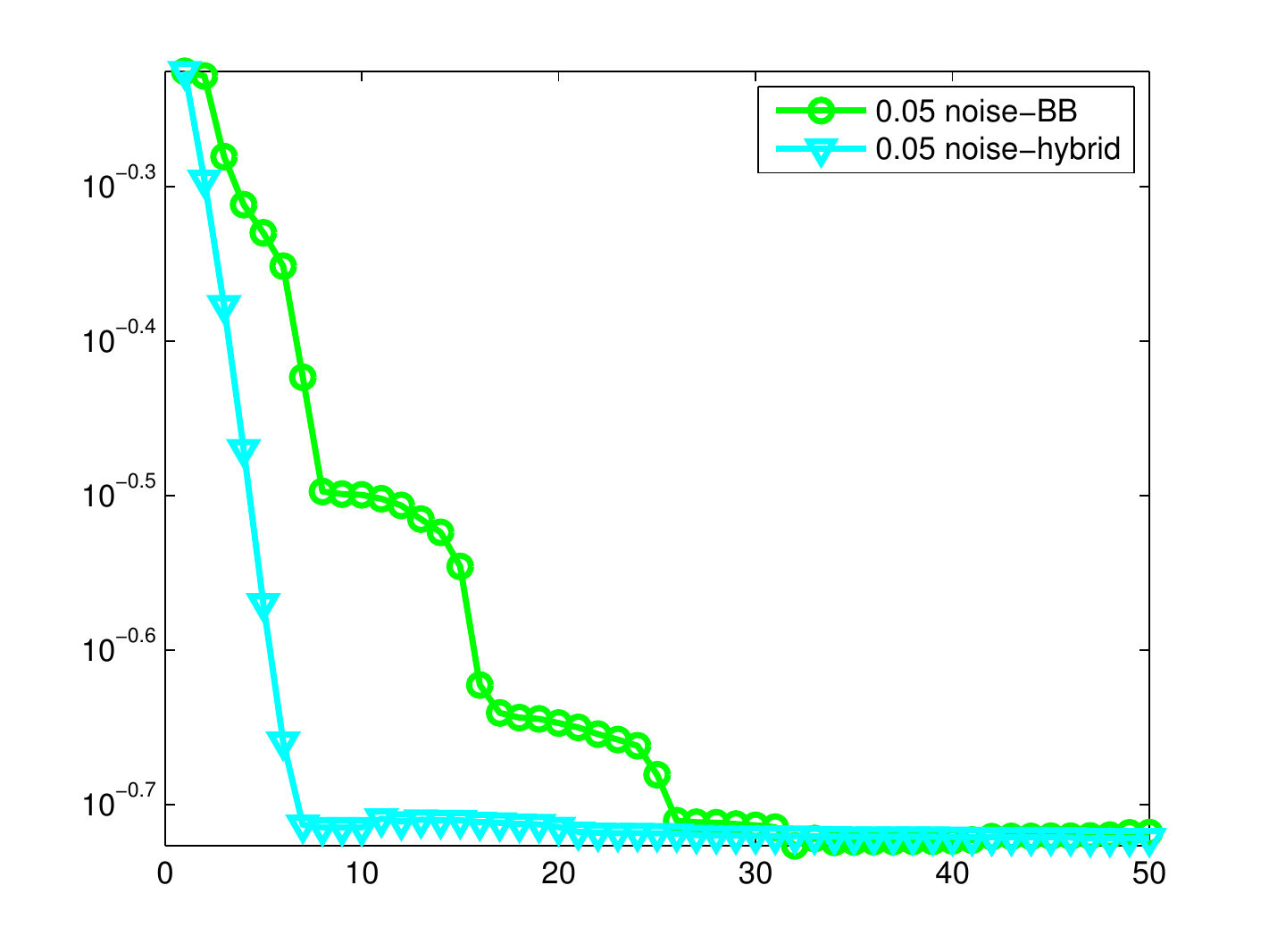}
\end{minipage}
 \caption{\label{fig:template 2}Comparison of the hybrid method and the nonlinear optimization method by relative error $\epsilon_f$ of reconstructed $\mu_{a,xf}$ for second template. First, second and third row: noise-free, 2$\%$ noise, and 5$\%$ noise data. First, second, third, and fourth column: one-measurement, two-measurement, three-measurement and four-measurement.}
\end{figure}

\begin{table}[htpb]
  \centering
\begin{center}
\begin{tabular}{c|c|c|c|c|c|c|c}
\hline
Noise level  ($\%$) & &\multicolumn{2}{c|}{0}     & \multicolumn{2}{c|}{2}  & \multicolumn{2}{c}{5}\\
\hline
Method & Meas. & Hybrid  & Opt. & Hybrid  & Opt.  & Hybrid  & Opt. \\
\hline
\multirow{4}*{\makecell*[tc]{First\\ template}} & 1& 7.85e-2  & 1.50e-1  & 1.05e-1  & 1.78e-1 & 1.89e-1 & 2.20e-1\\
 ~& 2 & 6.37e-2 & 6.92e-2   & 9.19e-2 & 9.32e-2 & 1.75e-1 & 1.76e-1\\
 ~& 3 & 5.23e-2 & 5.23e-2  & 8.41e-2  & 8.42e-2  & 1.70e-1 & 1.70e-1\\
 ~& 4 & 5.22e-2 & 5.23e-2 & 8.28e-2  & 8.28e-2 & 1.69e-1 & 1.69e-1 \\
\hline
 \multirow{4}*{\makecell*[tc]{Second\\ template}}& 1 & 8.12e-2 & 3.85e-1 & 1.13e-1 & 3.86e-1 & 1.97e-1 & 5.17e-1\\
~ & 2 & 6.56e-2 & 1.24e-1 & 1.04e-1 & 1.37e-1  & 1.91e-1& 2.52e-1\\
~ & 3 & 4.95e-2  & 4.94e-2& 8.72e-2  & 9.32e-2  & 1.85e-1  & 1.90e-1 \\
~ &4 & 4.59e-2  & 4.58e-2 & 8.64e-2  & 8.77e-2  & 1.90e-1  & 1.91e-1 \\
\hline
\end{tabular}
\end{center}
  \caption{Relative error $\epsilon_f$ of reconstructed $\mu_{a,xf}$ after 50 steps}
  \label{tab:1}
\end{table}

\section{Conclusion}
\label{sec:5}
In this paper, we propose a hybrid method to reconstruct the fluorescence absorption coefficient combining SIM method and the nonlinear optimization method. In SIM, two monotonic sequences are generated and they are expected to approach the exact coefficient from two sides. In numerical simulations, SIM performs well with lower accuracy. To stabilize the algorithm, nonlinear optimization as a state-of-art method is applied to mitigate the instability and achieve higher accuracy. In nonlinear optimization method, we take log-type function as our error function, and apply adjoint method and BB stepsize to obtain the gradient of error function and stepsize.

We use two templates to test our algorithms respectively on noise-free, 2$\%$ noise and 5$\%$ noise data. Compared to nonlinear optimization method, we find that in fewer measurements, hybrid method is more advantageous. In fewer measurements, since the error function is not convex, optimization method easily fall into a local minimum, even when there is no noise. However, due to an explicit $\mu_{a,xf}$ in \eqref{eq:4}, SIM is inspired by fixed-point iteration, so it is more inclined to satisfy \eqref{eq:4} and it is more likely to avoid the local minimum. In one-measurement case, hybrid method has higher accuracy. In three or four-measurement case, both methods can eventually achieve the same accuracy. Despite this, in most cases, hybrid method converges more rapidly and achieve approximately linear convergence in the first SIM steps. Therefore, compared to applying SIM or optimization method for quantitative FPAT alone, hybird method outperforms each of them with faster convergence and higher accuracy.

In the future, we intend to search for better error function so that the fluorescence absorption coefficient and quantum efficiency can be more accurately reconstructed with few measurements. Meanwhile, the theory on the convergence of hybrid method based on multi-measurement is also worth studying.

\section{Acknowledgments}
\label{sec:6}

 This work was supported by NSF grants of China (61421062, 11471024).

\appendix

\section{The derivative of gradient of error function}
In this section, we regard $ \mathcal{F}$ as a functional with respective to $\mu_{a,xf}$ and $\eta$. For convenience, we omit the subscript on measurement 's', and the error function is
\begin{equation}
  \label{eq:33}
  \mathcal{F}(\mu_{a,xf},\eta)=\frac{1}{2}\norm{\log (\bm{H}(\mu_{a,xf})-\log(h^*))}_2^2+\mathcal{R}(\mu_{a,xf},\eta),
\end{equation}

\begin{theorem}
  \label{th:4}
  Let $\phi_m^*$ and $\phi_x^*$ are the solutions of
  \begin{equation}
    \label{eq:34}
    \left\{
      \begin{aligned}
        &(-\theta\cdot\nabla+\mu_{a,m}+\mu_{s,m}-\mu_{s,m}\bm{K})\phi_m^*=\bm{A}^*\left(\frac{1}{h}(\log(h)-\log(h^*))\mu_{a,m}\right),\\
        &\phi_m^*|_{\Gamma_+}=0,
      \end{aligned}
      \right.
  \end{equation}
and
\begin{equation}
  \label{eq:35}
  \left\{
    \begin{aligned}
      &(-\theta\cdot \nabla+\mu_{a,x}+\mu_{a,xf}+\mu_{s,x}-\mu_{s,x}\bm{K})\phi_x^*\\
      &\phantom{aaa}=\bm{A}^*\left(\eta\mu_{a,xf}(\bm{A}\phi^*_m)+\frac{1}{h}(\log h-\log h^*)(\mu_{a,xi}+(1-\eta)\mu_{a,xf})\right),\\
    &\phi_x^*|_{\Gamma_+}=0.
    \end{aligned}
    \right.
\end{equation}
Then ignoring the regularization, assume $h_f$ and $h_\eta$ are two feasible direction of $\mu_{a,xf}$ and $\eta$ respectively, we deduce the gradient of $\mathcal{F}$ is
\begin{equation}
\begin{aligned}
  \label{eq:36}
  &\mathcal{F}'(\mu_{a,xf},\eta)(h_f,h_\eta)\\
 =&\left<\frac{1}{h}(\log h-\log h^*)(1-\eta)(\bm{A}\phi_x)+(\bm{A}\phi_m^*)(\tilde{\bm{A}}\phi_x)\eta-\bm{A}(\phi_x^*\phi_x),h_f\right>\\
    &\phantom{aaaaa}+\left<-\frac{1}{h}(\log h-\log h^*)-\mu_{a,xf}(\bm{A}\phi_x)+(\bm{A}\phi_m^*)(\tilde{\bm{A}}\phi_x)\mu_{a,xf},h_\eta\right>
  \end{aligned}
  \end{equation}

\end{theorem}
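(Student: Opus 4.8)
The plan is to perform a standard adjoint-state (Lagrangian) computation, using that the objective depends on $(\mu_{a,xf},\eta)$ only explicitly through the coefficients in $h$ and implicitly through the RTE solutions $\phi_x,\phi_m$. Writing $w:=\frac{1}{h}(\log h-\log h^*)$ and dropping the regularization, the chain rule immediately gives $\mathcal{F}'(\mu_{a,xf},\eta)(h_f,h_\eta)=\langle w,\dot h\rangle_{\mathcal{L}^2(\Omega)}$, where $\dot h$ is the directional derivative of $h=(\mu_{a,xi}+(1-\eta)\mu_{a,xf})(\bm{A}\phi_x)+\mu_{a,m}(\bm{A}\phi_m)$. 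Differentiating and separating the explicit coefficient variation from the implicit field variation yields
\[ \dot h=\big[(1-\eta)h_f-\mu_{a,xf}h_\eta\big](\bm{A}\phi_x)+(\mu_{a,xi}+(1-\eta)\mu_{a,xf})(\bm{A}\dot\phi_x)+\mu_{a,m}(\bm{A}\dot\phi_m). \]
The first bracket is already explicit in $h_f,h_\eta$; the remaining two terms hide the sensitivities $\dot\phi_x,\dot\phi_m$, and the adjoint fields of \eqref{eq:34}--\eqref{eq:35} are introduced precisely to remove them.

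Next I would derive the linearized forward system by differentiating \eqref{eq:1}. Since the first RTE is independent of $\eta$, its linearization is $(\theta\cdot\nabla+\mu_{a,x}+\mu_{s,x}-\mu_{s,x}\bm{K})\dot\phi_x=-h_f\phi_x$ with $\dot\phi_x|_{\Gamma_-}=0$, so $\dot\phi_x$ is driven by $h_f$ alone. Differentiating the second RTE gives $(\theta\cdot\nabla+\mu_{a,m}+\mu_{s,m}-\mu_{s,m}\bm{K})\dot\phi_m=\eta h_f(\tilde{\bm{A}}\phi_x)+\mu_{a,xf}h_\eta(\tilde{\bm{A}}\phi_x)+\eta\mu_{a,xf}(\tilde{\bm{A}}\dot\phi_x)$ with $\dot\phi_m|_{\Gamma_-}=0$; these sensitivities lie in $\mathcal{L}^p(X)$ by Lemma \ref{lem:1}, and the directional derivatives exist by the differentiability established in \cite{Ren2015}. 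The essential structural feature is the coupling: $\dot\phi_x$ re-enters as a source for $\dot\phi_m$, so the two eliminations must be performed in cascade rather than independently.

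The core step is the transport duality obtained by pairing each sensitivity equation with its adjoint field and integrating by parts, using $\int_X(\theta\cdot\nabla u)\,v=-\int_X u\,(\theta\cdot\nabla v)+\int_{\partial\Omega\times\mathcal{S}^{d-1}}(\theta\cdot\nu)\,uv$. With the boundary conditions $\dot\phi|_{\Gamma_-}=0$ and $\phi^*|_{\Gamma_+}=0$, the boundary integral decomposes over $\Gamma_\pm$ and cancels, while the symmetry of the Henyey--Greenstein kernel \eqref{eq:3} makes $\bm{K}$ self-adjoint; hence the operators on the left of \eqref{eq:34}--\eqref{eq:35} are the formal adjoints of the forward transport operators in \eqref{eq:1}. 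I would eliminate $\dot\phi_m$ first: testing \eqref{eq:34} against $\dot\phi_m$ turns $\langle w,\mu_{a,m}(\bm{A}\dot\phi_m)\rangle$ into the pairing of the $\dot\phi_m$-source with $\phi_m^*$, where the identities $\langle\bm{A}\phi,g\rangle_\Omega=\langle\phi,\bm{A}^*g\rangle_X$ with $(\bm{A}^*g)(x,\theta)=g(x)$ (and the analogous relation for $\tilde{\bm{A}}$) are used. This produces two explicit contributions carrying $(\bm{A}\phi_m^*)(\tilde{\bm{A}}\phi_x)$ against $h_f$ and $h_\eta$, plus one residual term still containing $\dot\phi_x$.

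Finally I would gather every surviving $\dot\phi_x$ contribution — the term from $(\mu_{a,xi}+(1-\eta)\mu_{a,xf})(\bm{A}\dot\phi_x)$ together with the residual generated while eliminating $\dot\phi_m$ — recast them as a single pairing $\langle\,\cdot\,,\dot\phi_x\rangle_X$, and identify the resulting bracket as $\bm{A}^*(\cdots)$, namely the right-hand side of \eqref{eq:35}. Pairing \eqref{eq:35} with $\dot\phi_x$ and invoking its source $-h_f\phi_x$ then replaces this by $\langle-h_f\phi_x,\phi_x^*\rangle_X=-\langle\bm{A}(\phi_x^*\phi_x),h_f\rangle_\Omega$, which is the term $-\bm{A}(\phi_x^*\phi_x)$ appearing in \eqref{eq:36}; regrouping the $h_f$- and $h_\eta$-coefficients then delivers \eqref{eq:36}. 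The main obstacle I expect is the bookkeeping of this cascade: the residual $\dot\phi_x$ produced by the $\phi_m^*$ step must be merged with the direct $\dot\phi_x$ term before the $\phi_x^*$ identity can be applied, and one must track the $\bm{A}$ versus $\tilde{\bm{A}}$ factors (and the $S_d$ normalization), as well as the $\eta$- versus $\mu_{a,xf}$-coefficients in the $h_\eta$ part, with care; verifying that the assembled source coincides exactly with the right-hand side of \eqref{eq:35} is where a sign or normalization slip is most likely.
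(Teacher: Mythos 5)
Your proposal follows essentially the same route as the paper's proof: a chain-rule expansion of $\mathcal{F}'$ into explicit coefficient terms plus terms in $\bm{A}\phi_x'$ and $\bm{A}\phi_m'$, followed by a cascaded elimination that first uses the adjoint equation \eqref{eq:34} to convert the $\phi_m'$ term into explicit $h_f,h_\eta$ contributions plus a residual in $\phi_x'$, and then merges that residual with the direct $\phi_x'$ term so that \eqref{eq:35} yields $\left<-\bm{A}(\phi_x^*\phi_x),h_f\right>$. Your write-up is somewhat more explicit than the paper about the linearized forward system and the transport integration by parts, but the decomposition, the order of elimination, and the key identities are the same, so the argument is correct.
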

\begin{proof}
  According to the chain rule,
  \begin{equation}
    \label{eq:37}
    \begin{aligned}
      &\mathcal{F}'(\mu_{a,xf},\eta)(h_f,h_\eta)\\
      =&\left<\log h -\log h^*,\frac{1}{h}\left[(\mu_{a,xi}+(1-\eta)\mu_{a,xf})(\bm{A}\phi_x')+((1-\eta)h_f-h_\eta \mu_{a,xf})(\bm{A}\phi_x)+\mu_{a,m}(\bm{A}\phi_m')\right]\right>\\
      =&\left<\frac{1}{h}(\log h -\log h^*)(\mu_{a,xi}+(1-\eta)\mu_{a,xf}),\bm{A}\phi_x'\right>\\
      &+\left<\frac{1}{h}(\log h -\log h^*)\mu_{a,m},\bm{A}\phi_m'\right>\\
      &+\left<\frac{1}{h}(\log h -\log h^*)(1-\eta)(\bm{A}\phi_x),h_f\right>\\
      &+\left<-\frac{1}{h}(\log h -\log h^*)\mu_{a,xf}(\bm{A}\phi_x),h_\eta\right>.
    \end{aligned}
  \end{equation}
Firstly, we can simplify the first and second term, that is
\begin{equation}
  \label{eq:38}
  \begin{aligned}
    &\left<\frac{1}{h}(\log h -\log h^*)\mu_{a,m},\bm{A}\phi_m'\right>\\
    =&\left<\bm{A}^*\left[\frac{1}{h}(\log h -\log h^*)\mu_{a,m}\right],\phi_m'\right>\\
    =&\left<\phi_m^*,\eta\mu_{a,xf}(\tilde{\bm{A}}\phi'_x)+\eta h_f(\tilde{\bm{A}}\phi_x)+h_\eta\mu_{a,xf}(\tilde{\bm{A}}\phi_x)\right>\\
    =&\left<\tilde{\bm{A}}^*(\eta\mu_{a,xf}(\tilde{\bm{A}}\phi_m^*)),\phi_x'\right>\\
    &+\left<\eta(\bm{A}\phi_m^*)(\tilde{\bm{A}}\phi_x),h_f\right>\\
    &+\left<\mu_{a,xf}(\bm{A}\phi_m^*)(\tilde{\bm{A}}\phi_x),h_\eta\right>,
  \end{aligned}
\end{equation}
and
\begin{equation}
  \label{eq:39}
  \begin{aligned}
    &\left<\frac{1}{h}(\log h -\log h^*)(\mu_{a,xi}+(1-\eta)\mu_{a,xf}),\bm{A}\phi_x'\right>\\
    =&\left<\bm{A}^*\left[\frac{1}{h}(\log h -\log h^*)(\mu_{a,xi}+(1-\eta)\mu_{a,xf})\right],\phi_x'\right>.
  \end{aligned}
\end{equation}
Then putting the first term on the right side of equation \eqref{eq:38} and equation \eqref{eq:39}, it is
\begin{equation}
  \label{eq:40}
  \begin{aligned}
    &\left<\bm{A}^*(\eta\mu_{a,xf}(\tilde{\bm{A}}\phi_m^*)),\phi_x'\right>+\left<\frac{1}{h}(\log h -\log h^*)(\mu_{a,xi}+(1-\eta)\mu_{a,xf}),\bm{A}\phi_x'\right>\\
    =&\left<\bm{A}^*\left[\eta\mu_{a,xf}(\tilde{\bm{A}}\phi_m^*)+\frac{1}{h}(\log h -\log h^*)(\mu_{a,xi}+(1-\eta)\mu_{a,xf})\right],\phi_x'\right>\\
   =&\left<\phi_x^*,-h_f\phi_x\right>\\
    =&\left<-\bm{A}(\phi_x^*\phi_x),h_f\right>.
  \end{aligned}
\end{equation}
Therefore,
\begin{equation}
  \label{eq:41}
  \begin{aligned}
    &\mathcal{F}'(\mu_{a,xf},\eta)(h_f,h_\eta)\\
    =&\left<\frac{1}{h}(\log h -\log h^*)(1-\eta)(\bm{A}\phi_x)+\eta(\bm{A}\phi_m^*)(\tilde{\bm{A}}\phi_x)-\bm{A}(\phi_x^*\phi_x),h_f\right>\\
    &+\left<-\frac{1}{h}(\log h -\log h^*)\mu_{a,xf}(\bm{A}\phi_x)+\mu_{a,xf}(\bm{A}\phi_m^*)(\tilde{\bm{A}}\phi_x),h_\eta\right>.
  \end{aligned}
\end{equation}

\end{proof}
\section{Some techniques of inner operation}

In order to facilitate the derivation of the object function's gradient in theorem \ref{th:4}, we list some techniques of inner operation applied in theorem \ref{th:4}. For any function $f_1(x,\theta)\in\mathcal{L}^2(\X),\ f_2(x,\theta)\in\mathcal{L}^2(\X), \ f_3(x)\in\mathcal{L}^2(\Omega)$,
\begin{equation}
  \label{eq:app1}
  \begin{aligned}
    &\left<f_1(x,\theta),(\tilde{\bm{A}}f_2(x,\theta))f_3(x)\right>\\
    =&\frac{1}{S_d}\int_\Omega f_1(x,\theta)\oint_{\mathcal{S}^{d-1}}\left(\oint_{\mathcal{S}^{d-1}}f_2(x,\theta')\rd \theta'\right)f_3(x)\rd \theta \rd x\\
    =&\frac{1}{S_d}\int_\Omega f_2(x,\theta')f_3(x)\left(\oint_{\mathcal{S}^{d-1}}f_1(x,\theta)\rd \theta\right)\rd \theta' \rd x\\
    =&\left<\bm{A}^*[(\tilde{\bm{A}}f_1)f_3],f_2\right>,
  \end{aligned}
\end{equation}

\begin{equation}
  \label{eq:app2}
  \begin{aligned}
    &\left<f_1(x,\theta),(\tilde{\bm{A}}f_2(x,\theta))f_3(x)\right>\\
    =&\frac{1}{S_d}\int_\Omega f_1(x,\theta)\oint_{\mathcal{S}^{d-1}}\left(\oint_{\mathcal{S}^{d-1}}f_2(x,\theta')\rd \theta'\right)f_3(x)\rd \theta \rd x\\
    =&\int_\Omega \left(\int_\Omega f_1(x,\theta)\rd \theta\right)\left(\oint_{\mathcal{S}^{d-1}}f_2(x,\theta')\rd \theta'\right)f_3(x)\rd x\\
    =&\left<(\bm{A}f_1)(\tilde{\bm{A}}f_2),f_3\right>,
  \end{aligned}
\end{equation}

\begin{equation}
  \label{eq:app3}
  \begin{aligned}
    &\left<f_1(x,\theta),f_2(x,\theta)f_3(x)\right>\\
    =&\int_\Omega \oint_{\mathcal{S}^{d-1}} f_1(x,\theta)f_2(x,\theta)f_3(x)\rd \theta \rd x\phantom{aaaaaaaaaaaaaa}\\
    =&\int_\Omega (\oint_{\mathcal{S}^{d-1}} f_1(x,\theta)f_2(x,\theta)\rd \theta )f_3(x)\rd x\\
    =&\left<\bm{A}(f_1f_2),f_3\right>.
  \end{aligned}
\end{equation}

\section*{References}
 % \bibliography{qpat}

\end{document}